\DeclareMathAlphabet{\mathpzc}{OT1}{pzc}{m}{it}
\renewcommand{\eprint}[1]{#1}
\mathchardef\mhyph="2D
\numberwithin{equation}{section}
\newtheorem{theorem}{Theorem}[section]
\newtheorem{lemma}[theorem]{Lemma}
\newtheorem{proposition}[theorem]{Proposition}
\theoremstyle{remark}
\newtheorem{example}[theorem]{Example}
\theoremstyle{definition}
\newcommand\bp{\begin{proof}}
\newcommand\ep{\end{proof}}
\newcommand\ee{\nopagebreak\mbox{\ }\hfill$\diamondsuit$}
\DeclareMathOperator{\End}{End}
\DeclareMathOperator{\Hilb}{Hilb}
\DeclareMathOperator{\Hom}{Hom}
\DeclareMathOperator{\Nat}{Nat}
\DeclareMathOperator{\Irr}{Irr}
\DeclareMathOperator{\Rep}{Rep}
\DeclareMathOperator\Tens{\mathpzc{Tens}}
\DeclareMathOperator\Tenssi{\mathpzc{Tens}_{si}}
\DeclareMathOperator{\Tr}{Tr}
\DeclareMathOperator\YD{\mathpzc{YD}_{brc}}
\newcommand\Dhat{\hat\Delta}
\newcommand{\C}{{\mathbb C}}
\newcommand{\B}{{\mathcal B}}
\newcommand\CC{{\mathcal C}}
\newcommand\D{\mathcal D}
\newcommand\E{\mathcal E}
\newcommand{\F}{{\mathcal F}}
\newcommand{\G}{{\mathcal G}}
\newcommand{\M}{{\mathcal M}}
\newcommand\PP{{\mathcal P}}
\newcommand\SSS{\mathcal S}
\newcommand\TT{\mathcal T}
\newcommand\U{\mathcal U}
\newcommand\X{\mathcal X}
\newcommand\un{{\mathds 1}}
\newcommand\trhd{\mathbin{\tilde{\rhd}}}
\newcommand{\alg}{\textrm{alg}}
\newcommand{\circt}%
{\mathbin{%
\mathchoice
{\ooalign{$\ocircle$\cr\hidewidth\raise-.15ex\hbox{$\scriptstyle\top\mkern2.05mu$}\cr}}
{\ooalign{$\ocircle$\cr\hidewidth\raise-.15ex\hbox{$\scriptstyle\top\mkern2.05mu$}\cr}}
{\ooalign{$\scriptstyle\ocircle$\cr\hidewidth\raise-.12ex\hbox{$\scriptscriptstyle\top\mkern1mu$}\cr}}
{\ooalign{$\scriptstyle\ocircle$\cr\hidewidth\raise-.12ex\hbox{$\scriptscriptstyle\top\mkern1mu$}\cr}}
}}
\begin{document}

\title[Duality for Yetter--Drinfeld algebras]{Categorical duality for Yetter--Drinfeld algebras}

\author[S. Neshveyev]{Sergey Neshveyev}

\email{sergeyn@math.uio.no}

\address{Department of Mathematics, University of Oslo,
P.O. Box 1053 Blindern, NO-0316 Oslo, Norway}

\thanks{The research leading to these results has received funding from the European Research Council
under the European Union's Seventh Framework Programme (FP/2007-2013) / ERC Grant Agreement no. 307663
}

\author[M. Yamashita]{Makoto Yamashita}

\email{yamashita.makoto@ocha.ac.jp}

\address{Department of Mathematics, Ochanomizu University,
Otsuka 2-1-1, 192-0361, Tokyo, Japan}

\thanks{Supported by the Danish National Research Foundation through the Centre
for Symmetry and Deformation (DNRF92), and by JSPS KAKENHI Grant Number 25800058}

\date{October 16, 2013; new version May 26, 2014; minor corrections October 23, 2014}

\begin{abstract}
We study tensor structures on $(\Rep G)$-module categories defined by actions of a compact quantum group $G$ on unital C$^*$-algebras. We show that having a tensor product which defines the module structure is equivalent to enriching the action of $G$ to the structure of a braided-commutative Yetter--Drinfeld algebra. This shows that the category of braided-commutative Yetter--Drinfeld $G$-C$^*$-algebras is equivalent to the category of generating unitary tensor functors from $\Rep G$ into C$^*$-tensor categories. To illustrate this equivalence, we discuss coideals of quotient type in $C(G)$, Hopf--Galois extensions and noncommutative Poisson boundaries.
\end{abstract}

\maketitle

\bigskip

\section*{Introduction}
This paper is a contribution to the study of compact quantum group actions from the categorical point of view. The idea of this approach can be traced to works of Wassermann~\cite{MR990110} and Landstad~\cite{MR1190512} in the 1980s. From the modern point of view, they proved that there is a one-to-one correspondence between full multiplicity ergodic actions of a compact group~$G$ and unitary fiber functors $\Rep G\to\Hilb_f$. The quantum analogue of this result in the purely algebraic setting was proved by Ulbrich~\cite{MR1006625} and Schauenburg~\cite{MR1408508}, and the corresponding result in the C$^*$-algebraic setting was proved by Bichon, De Rijdt and Vaes~\cite{MR2202309}. Thus, for a compact quantum group $G$, there is a correspondence between unitary fiber functors on $\Rep G$ and full quantum multiplicity ergodic actions of $G$. It is natural to ask then what corresponds to unitary tensor functors from $\Rep G$ into arbitrary C$^*$-tensor categories. We show that this is braided-commutative Yetter--Drinfeld algebras, which are algebras equipped with actions of $G$ and $\hat G$ satisfying certain compatibility conditions. Therefore such algebras play the same role for general tensor functors as Hopf--Galois objects for fiber functors, at least in the C$^*$-setting.

This can also be interpreted as follows. As has recently been shown in~\citelist{\cite{MR3121622} \cite{neshveyev-mjm-categorification}}, actions of~$G$ can be described in terms of $(\Rep G)$-module categories. Then our result says that such a module category structure is defined by a tensor functor if and only if we can also define an action of $\hat G$ to get a braided-commutative Yetter--Drinfeld algebra.

As an application, we strengthen a result of Tomatsu~\cite{MR2335776} characterizing coideals of quotient type in $C(G)$, and extend this characterization to Hopf--Galois objects. We also show that the correspondence between Yetter--Drinfeld algebras and tensor functors provides a rigorous link between Izumi's theory of Poisson boundaries of discrete quantum groups~\cite{MR1916370} and categorical Poisson boundaries we introduced in~\cite{NY-categorical-Poisson-boundary}.

\smallskip

\paragraph{\bf Acknowledgement} Part of this research was carried out
while the authors were attending the workshop ``Noncommutative
Geometry'' at  Mathematisches Forschungsinstitut Oberwolfach in
September 2013.  We thank the organizers and the staff for their
hospitality. M.Y.~thanks P.~Schauenburg and K.~Shimizu for bringing
his attention to~\cite{MR2863377} and~\cite{MR3039775}.

\bigskip

\section{Preliminaries} \label{sprelim}

In this section we briefly summarize the theory of compact quantum groups and their actions on operator algebras in the C$^*$-algebraic formulation, as well as discuss an algebraic approach to Yetter--Drinfeld C$^*$-algebras.

\smallskip

We mainly follow the conventions of~\cite{neshveyev-tuset-book}.  When $A$ and $B$ are C$^*$-algebras, $A \otimes B$ denotes their minimal tensor product. Unless said otherwise, we assume that C$^*$-categories are closed under subobjects. On the other hand, for C$^*$-tensor categories we do not assume that the unit object is simple. For objects $U$ and $V$ in a category $\CC$ we denote by $\CC(U,V)$ the set of morphisms $U\to V$.

\subsection{Compact quantum groups}
\label{sec:comp-quant-groups}

A \emph{compact quantum} group $G$ is represented by a unital C$^*$-algebra $C(G)$ equipped with a unital $*$-homomorphism $\Delta \colon C(G)\to C(G)\otimes C(G)$ satisfying the coassociativity $(\Delta \otimes \iota) \Delta = (\iota \otimes \Delta) \Delta$ and cancellation properties, meaning that $(C(G) \otimes 1) \Delta(C(G))$ and $(1 \otimes C(G)) \Delta(C(G))$ are dense in $C(G) \otimes C(G)$.  There is a unique state $h$ satisfying $(h \otimes \iota) \Delta = h$ (and/or $(\iota \otimes h) \Delta = h$) called the Haar state.  If $h$ is faithful, $G$ is called a reduced quantum group, and we are mainly interested in such cases.

A finite dimensional unitary representation of $G$ is a unitary element $U\in B(H_U)\otimes C(G)$, where~$H_U$ is a finite dimensional Hilbert space, such that $(\iota\otimes\Delta)(U)=U_{12}U_{13}$. The dense $*$-subalgebra of $C(G)$ spanned by matrix coefficients of finite dimensional representations is denoted by $\C[G]$.  The intertwiners between two representations $U$ and $V$ are the linear maps $T$ from $H_U$ to $H_V$ satisfying $V (T \otimes 1) = (T \otimes 1) U$.  The tensor product of two representations $U$ and $V$ is defined by $U_{13}V_{23}$ and denoted by $U\circt V$.  The category $\Rep G$ of finite dimensional unitary representations with intertwiners as morphisms and with tensor product $\circt$ becomes a semisimple C$^*$-tensor category.

Using the monoidal structure on $\Rep G$, for any $W \in \Rep G$, we can define an endofunctor $\iota \otimes W$ on $\Rep G$ which maps an object $U$ to $U \circt W$ and a morphism $T$ to $T\otimes\iota$.  A natural transformation between such functors $\iota \otimes W$ and $\iota \otimes V$ is given by a collection of morphisms $\eta_U\colon U \circt W \to U \circt V$ for $U \in \Rep G$ that are natural in $U$.

Denote the Woronowicz character $f_1\in\U(G)=\C[G]^*$ by~$\rho$. The space $\U(G)$ has the structure of a $*$-algebra, defined by duality from the Hopf $*$-algebra $(\C[G],\Delta)$. Every finite dimensional unitary representation $U$ of $G$ defines a $*$-representation $\pi_U$ of $\U(G)$ on $H_U$ by $\pi_U(\omega)=(\iota\otimes\omega)(U)$. We will often omit $\pi_U$ in expressions. Using the element $\rho$ the conjugate unitary representation to $U$ is defined by
$$
\bar U=(j(\rho)^{1/2}\otimes1)(j\otimes\iota)(U^*)(j(\rho)^{-1/2}\otimes1)\in B(\bar H_U)\otimes\C[G],
$$
where $j$ denotes the canonical $*$-anti-isomorphism $B(H_U)\cong B(\bar H_U)$ defined by $j(T)\bar\xi=\overline{T^*\xi}$. We have morphisms $R_U\colon \un\to \bar U\circt U$ and $\bar R_U\colon \un \to U\circt \bar U$ defined by
$$
R_U(1)=\sum_i\bar\xi_i\otimes\rho^{-1/2}\xi_i\ \ \text{and}\ \ \bar R_U(1)=\sum_i\rho^{1/2}\xi_i\otimes\bar\xi_i,
$$
where $\{\xi_i\}_i$ is an orthonormal basis in $H_U$. They solve the conjugate equations for $U$ and $\bar U$, meaning that
$$
(R^*_U\otimes\iota)(\iota\otimes \bar R_U)=\iota_{\bar U}\ \ \text{and}\ \ (\bar R_U^*\otimes\iota)(\iota\otimes R_U)=\iota_U.
$$

Therefore $\Rep G$ is a rigid C$^*$-tensor category. Woronowicz's Tannaka--Krein duality theorem recovers the $*$-Hopf algebra $\C[G]$ from the rigid semisimple C$^*$-tensor category $\Rep G$ and the forgetful fiber functor $U \mapsto H_U$.

\subsection{\texorpdfstring{$G$}{G}-algebras and \texorpdfstring{$(\Rep G)$}{(Rep G)}-module categories}
\label{sec:g-algebras}

Given a compact quantum group $G$, a unital $G$-C$^*$-algebra is a unital C$^*$-algebra $B$ equipped with a continuous left action $\alpha\colon B\to C(G)\otimes B$ of~$G$. This means that $\alpha$ is an injective unital $*$-homomorphism such that $(\Delta\otimes\iota)\alpha=(\iota\otimes\alpha)\alpha$ and such that the space $(C(G)\otimes1)\alpha(B)$ is dense in $C(G)\otimes B$.
The linear span of spectral subspaces,
$$
\B = \{ x \in B \mid \alpha(x) \in \C[G] \otimes_\alg B \},
$$
which is a dense $*$-subalgebra of $B$, is called the \textit{regular subalgebra} of $B$, and the elements of~$\B$ are called \emph{regular}.  More concretely, the algebra $\B$ is spanned by the elements of the form $(h\otimes\iota)((x\otimes1)\alpha(a))$ for $x\in\C[G]$ and $a\in B$. This algebra is of central importance for the categorical reconstruction of~$B$.

When $\D$ is a C$^*$-category, the category $\End(\D)$ of endofunctors of $\D$, with bounded natural transformations as morphisms, forms a C$^*$-tensor category.  A C$^*$-category $\D$ endowed with a unitary tensor functor from $\Rep G$ to the opposite of $\End(\D)$ is called a \emph{right $(\Rep G)$-module category}.  For $U \in \Rep G$, we denote the induced functor on $\D$ by $X \mapsto X \times U$.  An object $X$ in a $(\Rep G)$-module category $\D$ is said to be \emph{generating} if any other object $Y \in \D$ is isomorphic to a subobject of $X \times U$ for some $U \in \Rep G$.

Let us summarize the categorical duality theory of continuous actions of reduced compact quantum groups on unital C$^*$-algebras developed in~\cite{MR3121622} and~\cite{neshveyev-mjm-categorification}.

\begin{theorem}[\citelist{\cite{MR3121622}*{Theorem~6.4} \cite{neshveyev-mjm-categorification}*{Theorem~3.3}}]
\label{tactions}
Let $G$ be a reduced compact quantum group.  Then the following two categories are equivalent:
\begin{enumerate}
\item The category of unital $G$-C$^*$-algebras $B$ with unital $G$-equivariant $*$-homomorphisms as morphisms.
\item The category of pairs $(\D,M)$, where $\D$ is a right $(\Rep G)$-module C$^*$-category and $M$ is a generating object in $\D$, with equivalence classes of unitary $(\Rep G)$-module functors respecting the prescribed generating objects as morphisms.
\end{enumerate}
\end{theorem}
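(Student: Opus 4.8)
The plan is to establish the equivalence by constructing explicit functors in both directions together with natural isomorphisms identifying the two composites with the identities. From a unital $G$-C$^*$-algebra $(B,\alpha)$ I would produce the pair $(\D_B,M_B)$ as follows. Let $\D_B$ be the C$^*$-category of $G$-equivariant Hilbert $B$-modules that are finitely generated and projective over $B$, with $G$-equivariant adjointable maps as morphisms. The $(\Rep G)$-module structure sends $(\mathcal E,U)$ to $H_U\otimes\mathcal E$, a Hilbert $B$-module via the action on $\mathcal E$, with the coaction obtained by twisting that of $\mathcal E$ by the corepresentation $U$; the associativity and unit constraints of $\Rep G$, composed with the flip dictated by passing to the opposite of $\End(\D_B)$, supply the coherence data, and one checks $\D_B$ is closed under subobjects. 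Taking $M_B=B$ with its coaction $\alpha$ gives a generating object: the regular submodule of any equivariant Hilbert module decomposes spectrally, finite generation leaves only finitely many isotypic components, and one embeds the module equivariantly into a finite sum of copies of $H_U\otimes B=B\times U$. Base change along equivariant $*$-homomorphisms gives functoriality.

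Conversely, from $(\D,M)$ I would reconstruct $B$ from its spectral data. Fixing representatives $U_s$ of the irreducible classes, set $\B=\bigoplus_s\D(M,M\times U_s)\otimes\bar H_{U_s}$, with product sending $(T,S)\in\D(M,M\times U)\times\D(M,M\times V)$ to $(T\times\iota_V)\circ S\in\D(M,(M\times U)\times V)\cong\D(M,M\times(U\circt V))$, paired on the conjugate Hilbert spaces through the tensor structure of $\Rep G$, and with involution defined from the standard solutions $R_U,\bar R_U$ of the conjugate equations together with the Woronowicz character $\rho$ to fix the normalization. The module-category pentagon forces associativity and the conjugate equations force $(xy)^{*}=y^{*}x^{*}$ and $x^{**}=x$, so $\B$ is a unital $*$-algebra; the component at the trivial representation gives a canonical positive functional, and completing $\B$ in the norm coming from the associated representation yields a unital C$^*$-algebra $B$. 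The coaction defined on the $s$-th summand by the matrix coefficients of (the conjugate of) $U_s$ is continuous because $\B$ is, by construction, the regular subalgebra of $B$, and a module functor respecting generating objects induces a map on the spaces $\D(M,M\times U_s)$, hence an equivariant $*$-homomorphism.

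It remains to produce the two natural isomorphisms. For $\Psi\Phi(B)\cong B$, with $M_B=B$ one identifies $\D_B(B,H_{U_s}\otimes B)$ with the space of equivariant adjointable maps; tensoring with $\bar H_{U_s}$ and summing over $s$ recovers precisely the spectral decomposition of the regular subalgebra of $B$, so the reconstructed C$^*$-algebra is $B$, equivariantly and naturally in $B$. For $\Phi\Psi(\D,M)\cong(\D,M)$, send $X\in\D$ to the equivariant Hilbert $B$-module $\overline{\bigoplus_s\D(M,X\times U_s)\otimes\bar H_{U_s}}$ over $B=\Psi(\D,M)$, which sends $M$ to $B$; full faithfulness is Frobenius reciprocity across the module structure, and essential surjectivity onto finitely generated projective equivariant modules is exactly where the hypothesis that $M$ generates $\D$ enters, since every such module is a subobject of some $H_U\otimes B$, the image of $M\times U$, and subobjects match because both sides are closed under subobjects. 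The step I expect to be the main obstacle is the analytic one in the reconstruction: showing that the algebraic $*$-algebra $\B$ admits a C$^*$-norm whose completion supports a continuous $G$-action --- that is, exhibiting a faithful $*$-representation of $\B$, for which the natural GNS-type construction from the trivial-representation component works precisely because $G$ is reduced --- together with the diagrammatic bookkeeping, using the associativity constraints and the conjugate equations, needed to verify that the reconstructed product, involution and coaction are well defined and that the round-trip identifications are natural.
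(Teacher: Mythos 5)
Your proposal follows essentially the same route as the paper's own (which is itself only an overview, with full details deferred to the cited references): the module category $\D_B$ of $G$-equivariant finitely generated Hilbert $B$-modules with $X\times U=H_U\otimes X$ and the coaction twisted by $U$, the distinguished generating object $B$ itself, and, in the other direction, the spectral $*$-algebra $\B=\bigoplus_s\bigl(\bar H_s\otimes\D(M,M\times U_s)\bigr)$ with product $(T\otimes\iota)S$, involution built from $\bar R_U$ and $\rho$, coaction by matrix coefficients, and completion via the conditional expectation onto $\End_\D(M)\cong B^G$. You also correctly isolate the genuinely analytic point: positivity and faithfulness of that expectation and the uniqueness of the C$^*$-completion carrying a continuous action, which is where reducedness of $G$ enters (\cite{MR3121622}*{Proposition~4.4}).

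One step is wrong as stated. To show that $B$ generates $\D_B$ you claim that finite generation of an equivariant Hilbert module forces it to have only finitely many nonzero isotypic components. This is false: $C(G)$ with the coaction $\Delta$ is singly generated over itself, yet its regular subalgebra is $\bigoplus_s(\bar H_s\otimes H_s)$, so every isotypic component is nonzero; the same happens for $B\times U$ in general. The correct (standard) argument is the equivariant stabilization the paper alludes to: choose finitely many generators of $X$, perturb them to regular vectors --- each of which individually has finite spectral support --- and use these to construct an equivariant adjointable surjection $H_U\otimes B\to X$ for a suitable finite-dimensional $U$; this surjection splits in the Hilbert-module category, exhibiting $X$ as a direct summand of $B\times U$. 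With that repair, the rest of your outline matches the intended proof.
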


We omit the precise definition of the equivalence relation on functors between pairs $(\D,M)$, since it will not be important to us, see \cite{MR3121622}*{Theorem~7.1} for details. Note also that, as follows from the proof, under the above correspondence the fixed point algebra $B^G$ is isomorphic to~$\End_\D(M)$.

\smallskip

In the following subsections we overview the proof of the theorem.

\subsection{From algebras to module categories}
\label{sec:from-algebra-categ}

Given a $G$-C$^*$-algebra $(B, \alpha)$, we consider the category $\D_B$ of $G$-equivariant finitely generated right Hilbert $B$-modules.  In other words, objects of $\D_B$ are finitely generated right Hilbert $B$-modules $X$ equipped with a linear map $\delta=\delta_X\colon X\to C(G)\otimes X$ which satisfies the comultiplicativity property  $(\Delta\otimes\iota)\delta=(\iota\otimes\delta)\delta$, such that $(C(G)\otimes 1)\delta(X)$ is dense in $C(G)\otimes X$, and such that $\delta$ is compatible with the Hilbert $B$-module structure in the sense that
\begin{align*}
  \delta(\xi a) &= \delta(\xi)\alpha(a),&
  \langle\delta(\xi),\delta(\zeta)\rangle &= \alpha(\langle\xi,\zeta\rangle),
\end{align*}
 for $\xi, \zeta\in X$ and $a\in B$.  Here, $C(G)\otimes X$ is considered as a right Hilbert $(C(G)\otimes B)$-module.

For $X \in \D_B$ and $U \in \Rep G$, we obtain a new object $X \times U$ in $\D_B$ given by the linear space $H_U \otimes X$, which is a right Hilbert $B$-module such that
$$
(\xi \otimes x) a = \xi \otimes x a \quad\text{and}\quad \langle \xi \otimes x, \eta \otimes y \rangle_B = (\eta, \xi) \langle x, y \rangle_B \quad \text{for}\ \ \xi, \eta \in H_U,\ x, y \in X,\ a \in B,
$$
together with the compatible $C(G)$-coaction map
\begin{equation}\label{ecomod}
\delta=\delta_{H_U\otimes X} \colon H_U\otimes X \to C(G)\otimes H_U\otimes X, \quad \delta(\xi\otimes x)=U^*_{21}(\xi\otimes\delta_X(x))_{213}.
\end{equation}
This construction is natural both in $X$ and $U$, and satisfies $(X \times U) \times V \cong X \times (U \circt V)$, with the obvious isomorphism mapping $\zeta\otimes\xi\otimes x\in H_V\otimes H_U\otimes X$ into $\xi\otimes\zeta\otimes x$. We keep the notation $H_U \otimes X$ when we want to emphasize the realization of the object $X \times U$ as a Hilbert module.  This way $\D_B$ becomes a right $(\Rep G)$-module category.

It is known that by the stabilization argument, any object in $\D_B$ is a direct summand of $B \times U$ for some $U \in \Rep G$.  Thus we may, and often will, consider $\D_B$ as an idempotent completion of $\Rep G$ via the correspondence $U \mapsto B \times U$. To be precise, we start from a C$^*$-category with the same objects as in $\Rep G$, but with the new enlarged morphism sets
$$
\CC_B(U,V)=\Hom_{G,B}(H_U\otimes B,H_V\otimes B),
$$
and form new objects from projections in the C$^*$-algebras $\CC_B(U,U)$, thus obtaining a C$^*$-category $\CC_B$. Note that, more explicitly, the set $\CC_B(U,V)$ consists of elements $T\in B(H_U,H_V)\otimes B$ such that
$$
V^*_{12}(\iota\otimes\alpha)(T)U_{12}=T_{13}.
$$
Note also that we automatically have $\CC_B(U,V)\subset B(H_U,H_V)\otimes\B$.

For every $W \in \Rep G$, the functor $\iota\otimes W$ on $\Rep G$ extends to $\CC_B$ in the obvious way: given a morphism $T\in\CC_B(U,V)\subset B(H_U,H_V)\otimes\B$ the corresponding morphism $T\otimes\iota\in\CC_B(U \circt W, V \circt W)$ is $T_{13}\in B(H_U,H_V)\otimes B(H_W)\otimes \B$.  The right $(\Rep G)$-module C$^*$-categories $\CC_B$ and $\D_B$ are equivalent via the functor mapping $U$ into $B\times U$.

Although the category $\CC_B$ might appear somewhat ad hoc compared to $\D_B$, it is more convenient for computations and some of the constructions become simpler for $\CC_B$. For example, suppose that $f\colon B_0 \to B_1$ is a morphism of $G$-C$^*$-algebras.  Then, $\iota \otimes f$ defines linear transformations $f_{\#, U, V}\colon \CC_{B_0}(U, V) \to \CC_{B_1}(U, V)$, which together define a functor $f_\#\colon \CC_{B_0} \to \CC_{B_1}$.  The pair $(f_\#, \iota)_{U, V}$ gives a $(\Rep G)$-module homomorphism in the sense of~\cite{MR3121622}*{Definition~3.17}.  Under the above equivalence $\D_B \simeq \CC_B$, this obvious construction corresponds to the scalar extension functor $\D_{B_0} \to \D_{B_1}$, mapping $X$ into $X\otimes_{B_0}B_1$, discussed in~\cite{MR3121622}.  Note also that for the composition of $G$-equivariant maps we have the desired equality of functors $f_\# g_\# = (f g)_\#$ between the categories~$\CC_B$, rather than a natural isomorphism of functors, which we would have for the categories~$\D_B$.

\subsection{From module categories to algebras}
\label{sec:tann-krein-reconstr}

We recall the construction of an action from a pair~$(\D,M)$ following~\cite{neshveyev-mjm-categorification}. Without loss of generality we may assume that the $(\Rep G)$-module category $\D$ is strict. Furthermore, in order to simplify the notation, by replacing $\D$ by an equivalent category we may assume that it is the idempotent completion of the category $\Rep G$ with larger morphism sets $\D(U,V)$ than in $\Rep G$, such that~$M$ is the unit object $\un$ in $\Rep G$ and the functor $\iota\times U$ on $\D$ is an extension of the functor $\iota\otimes U$ on $\Rep G$. Namely, we simply define the new set of morphisms between $U$ and $V$ as $\D(M\times U,M\times V)$.

Choose representatives~$U_s$ of isomorphism classes of irreducible representations of $G$, and assume that $U_e=\un$ for some index~$e$. We write $H_s$ instead of $H_{U_s}$. Consider the linear space
\begin{equation}
  \label{eq:regular-subalg}
  \B=\bigoplus_s(\bar H_s\otimes \D(\un,U_s)).
\end{equation}
We may assume that $\Rep G$ is small and consider also the much larger linear space
\begin{equation}
  \label{eq:univ-regular-alg}
  \tilde\B=\bigoplus_U(\bar H_U\otimes \D(\un,U)),
\end{equation}
where the summation is over all objects in $\Rep G$. Define a linear map $\pi\colon\tilde\B\to\B$ as follows. Given a finite dimensional unitary representation $U$, choose isometries $w_i\colon H_{s_i}\to H_U$ defining a decomposition of $U$ into irreducibles. Then, for $\bar\xi\otimes T\in \bar H_U\otimes\D(\un,U)$, put
$$
\pi(\bar\xi\otimes T)=\sum_i\overline{w_i^*\xi}\otimes w_i^*T.
$$
This map is independent of any choices. The space $\tilde\B$ is an associative algebra with product
$$
(\bar\xi\otimes T)\cdot(\bar\zeta\otimes S)=\overline{(\xi\otimes\zeta)}\otimes (T\otimes\iota)S.
$$
This product defines a product on $\B$ such that $\pi(x)\pi(y)=\pi(x\cdot y)$ for all $x,y\in\tilde\B$.

In order to define the $*$-structure on $\B$, first define an antilinear map $\bullet$ on $\tilde\B$ by
\begin{equation}\label{estarmod}
(\bar\xi\otimes T)^\bullet=\overline{\overline{\rho^{-1/2}\xi}}\otimes( T^*\otimes\iota)\bar R_U\ \ \text{for}\ \ \bar\xi\otimes T\in\bar H_U\otimes \D(\un,U).
\end{equation}
This map does not define an involution on $\tilde\B$, but on $\B$ we get an involution such that $\pi(x)^*=\pi(x^\bullet) $ for all $x\in \tilde\B$.

The $*$-algebra $\B$ has a natural left $\C[G]$-comodule structure defined by the map $\alpha\colon\B\to\C[G]\otimes\B$ such that if $U$ is a finite dimensional unitary representation of $G$, $\{\xi_i\}_i$ is an orthonormal basis in~$H_U$ and $u_{ij}$ are the matrix coefficients of $U$ in this basis, then
\begin{equation} \label{eaction}
\alpha(\pi(\bar\xi_i\otimes T))=\sum_ju_{ij}\otimes\pi(\bar\xi_j\otimes T).
\end{equation}
It is shown then that the action $\alpha$ is algebraic in the sense of \cite{MR3121622}*{Definition~4.2}, meaning that the fixed point algebra $A=\B^G\cong\End_\D(\un)$ is a unital C$^*$-algebra and the conditional expectation $(h\otimes\iota)\alpha\colon\B\to A$ is positive and faithful. It follows that there is a unique completion of $\B$ to a C$^*$-algebra $B$ such that $\alpha$ extends to an action of the reduced form of $G$ on $B$. This finishes the construction of an action from a module category.

\begin{example}\label{exfunalg}
Consider the action of $G$ on itself by left translations, so we consider the coproduct map of $C(G)$ as an action of $G$ on $C(G)$. It corresponds  to the module category $\D=\Hilb_f$ of finite dimensional Hilbert spaces, with the distinguished object $\C$, considered as a $(\Rep G)$-module category using the forgetful tensor functor $U\mapsto H_U$. Explicitly, identifying $\D(\un,H_U)$ with $H_U$, we get an isomorphism of the algebra $\tilde\B$ constructed from the pair $(\Hilb_f,\C)$ onto $\widetilde{\C[G]}=\bigoplus_U\bar H_U\otimes H_U$, and then an isomorphism $\B\cong\C[G]$ such that $\pi\colon\tilde\B\to\B$ turns into the map $\pi_G\colon \widetilde{\C[G]}\to \C[G]$ that sends $\bar\xi\otimes\zeta\in\bar H_U\otimes H_U$ into the matrix coefficient $((\cdot\,\zeta,\xi)\otimes\iota)(U)$ of $U$.\ee
\end{example}

Returning to the general case, consider the action $\alpha\colon B\to C(G)\otimes B$ defined by a pair $(\D,M)$ as described above. The equivalence between the $(\Rep G)$-module categories $\D$ and $\D_B$ can be very concretely described as follows. First of all, as we have discussed, by replacing $\D$ by an equivalent category we may assume that it is the idempotent completion of $\Rep G$ with new morphisms sets. Similarly, instead of $\D_B$ we consider the category $\CC_B$. Then in order to define an equivalence we just have to describe the isomorphisms $\D(U,V)\cong\CC_B(U,V)$. The equivalence between $\D$ and $\CC_B$ constructed in the proof of \cite{neshveyev-mjm-categorification}*{Theorem~2.3} (see also Section~3 there) has the property that a morphism $T\in\D(\un,V)$ is mapped into
$$
\sum_j\zeta_j\otimes\pi(\bar\zeta_j\otimes T)\in\CC_B(\un,V)\subset B(\C,H_V)\otimes B,
$$
where $\{\zeta_j\}_j$ is an orthonormal basis in $H_V$ and we identify $B(\C,H_V)\otimes B$ with $H_V\otimes B$. Now assume that we have a morphism $T\in\D(U,V)$. We can write it as $(\iota\otimes R^*_U)(S\otimes\iota)$, with $S=(T\otimes\iota)\bar R_U\in\D(\un,V\otimes\bar U)$. Choose an orthonormal basis $\{\xi_i\}_i$ in $H_U$. Then the morphism $S\otimes\iota_U$ defines the element
$$
\sum_{i,j}\zeta_j\otimes\bar\xi_i\otimes1\otimes\pi\big(\overline{(\zeta_j\otimes\bar\xi_i})\otimes S\big)\in\CC_B(U,V\otimes\bar U\otimes U)\subset B(H_U,H_V\otimes \bar H_U\otimes H_U)\otimes B,
$$
where we identify $B(H_U,H_V\otimes\bar H_U\otimes H_U)$ with $H_V\otimes\bar H_U\otimes B(H_U)$. It follows that $T=(\iota\otimes R^*_U)(S\otimes\iota)$ is mapped into
$$
\sum_{ij} R^*_U(\xi_i\otimes\,\cdot\,)\zeta_j\otimes\pi\big(\overline{(\zeta_j\otimes\bar\xi_i)}\otimes S\big)\in\CC_B(U,V)\subset B(H_U,H_V)\otimes B.
$$
Since $R^*_U(\xi_i\otimes\xi)=(\rho^{-1/2}\xi,\xi_i)$, we conclude that the isomorphism $\D(U,V)\cong\CC_B(U,V)$ is such that
$$
\D(U,V)\ni T\mapsto\sum_{i,j}\theta_{\zeta_j,\xi_i}\pi_U(\rho^{-1/2})\otimes\pi\big(\overline{(\zeta_j\otimes\bar\xi_i)}\otimes (T\otimes\iota)\bar R_U\big)
\in\CC_B(U,V),
$$
where $\theta_{\zeta_j,\xi_i}\in B(H_U,H_V)$ is the operator defined by $\theta_{\zeta_j,\xi_i}\xi=(\xi,\xi_i)\zeta_j$. This can also be written as
\begin{equation} \label{eequiv}
\D(U,V)\ni T\mapsto\sum_{i,j}\theta_{\zeta_j,\xi_i}\otimes\pi\big(\overline{(\zeta_j\otimes\overline{\rho^{-1/2}\xi_i})}\otimes (T\otimes\iota)\bar R_U\big)\in\CC_B(U,V)\subset B(H_U,H_V)\otimes B.
\end{equation}

\subsection{Yetter--Drinfeld algebras} \label{sHilbert}

Assume we have a continuous left action of $\alpha\colon B\to C(G)\otimes B$  of  a compact quantum group $G$ on a unital C$^*$-algebra $B$, as well as a continuous right action $\beta\colon B\to \M(B\otimes c_0(\hat G))$ of the dual discrete quantum group~$\hat G$. The action $\beta$ defines a left $\C[G]$-module algebra structure $\rhd\colon \C[G]\otimes B\to B$ on~$B$ by
$$
x\rhd a=(\iota\otimes x)\beta(x)\ \ \text{for} \ \ x\in\C[G]\ \ \text{and}\ \ a\in B.
$$
Here we view $c_0(\hat G)$ as a subalgebra of $\U(G)=\C[G]^*$. This structure is compatible with involution, in the sense that
\begin{equation}\label{estar}
x\rhd a^*=(S(x)^*\rhd a)^*.
\end{equation}
We say that $B$ is a {\em Yetter--Drinfeld $G$-C$^*$-algebra} if the following identity holds for all $x\in\C[G]$ and $a\in\B$:
\begin{equation} \label{eYD1}
\alpha(x\rhd a) =x_{(1)} a_{(1)}S(x_{(3)})\otimes ( x_{(2)}\rhd a_{(2)}),
\end{equation}
where we use Sweedler's sumless notation, so we write $\Delta(x)=x_{(1)}\otimes x_{(2)}$ and $\alpha(a)=a_{(1)}\otimes a_{(2)}$.
Note that the above identity implies that $\B\subset B$ is a submodule over $\C[G]$.

Yetter--Drinfeld $G$-C$^*$-algebras can be regarded as $D(G)$-C$^*$-algebras for the Drinfeld double $D(G)$ of $G$, and they are studied in the more general setting of locally compact quantum groups by Nest and Voigt~\cite{MR2566309}. It is not difficult to see that our definition is equivalent to theirs,\footnote{It should also  be taken into account that the definition of coproduct on  $C_0(\hat G)$ used in the theory of locally compact quantum groups is opposite to the one usually used for compact quantum groups.} but
the case of compact quantum groups allows for the above familiar
algebraic formulation, which is more convenient for our purposes. In
the case of reduced compact quantum groups we can make it purely
algebraic by getting rid of the right action $\beta$ altogether.

\begin{proposition} \label{palgYD}
Assume that $G$ is a reduced compact quantum group and $\alpha\colon B\to C(G)\otimes B$ is a continuous action of $G$ on a unital C$^*$-algebra $B$. Let $\B\subset B$ be the subalgebra of regular elements. Suppose that $\B$ is also a left $\C[G]$-module algebra such that conditions \eqref{estar} and \eqref{eYD1} are satisfied for all $x\in\C[G]$ and $a\in\B$. Then there exists a unique continuous right action $\beta\colon B\to \M(B\otimes c_0(\hat G))$ such that $x\rhd a=(\iota\otimes x)\beta(a)$ for all $x\in\C[G]$ and $a\in\B$.
\end{proposition}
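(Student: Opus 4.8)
The plan is to reconstruct $\beta$ directly from $\rhd$ on the regular subalgebra $\B$ and then extend it by continuity to $B$. Fix representatives $U_s$ of the irreducible representations of $G$ together with orthonormal bases of the $H_s$, let $u^s_{ij}\in\C[G]$ be the matrix coefficients of $U_s$, and let $\omega^s_{ij}\in B(H_s)\subset\U(G)$ be the dual matrix units, so that $\pi_{U_r}(\omega^s_{ij})=\delta_{rs}e_{ij}$ and $\langle u^t_{ab},\omega^s_{ij}\rangle=\delta_{st}\delta_{ai}\delta_{bj}$. I would set
$$
\beta_s(a)=\sum_{i,j}(u^s_{ij}\rhd a)\otimes\omega^s_{ij}\in\B\otimes B(H_s)\qquad(a\in\B),
$$
so that $(\iota\otimes x)\big(\sum_s\beta_s(a)\big)=x\rhd a$ for all $x\in\C[G]$ by the pairing above. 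A direct computation shows that $a\mapsto(\beta_s(a))_s$ is a unital $*$-homomorphism of $\B$ into the product $*$-algebra $\prod_s B\otimes B(H_s)$: multiplicativity is the module-algebra identity $u^s_{ij}\rhd(ab)=\sum_k(u^s_{ik}\rhd a)(u^s_{kj}\rhd b)$ together with $\omega^s_{ij}\omega^t_{kl}=\delta_{st}\delta_{jk}\omega^s_{il}$; unitality is $x\rhd1=\varepsilon(x)1$; and the $*$-property is exactly condition~\eqref{estar}, via $S(u^s_{ij})^*=u^s_{ji}$ and $(\omega^s_{ij})^*=\omega^s_{ji}$.

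The crucial step is to show that each $\beta_s$ extends to a $*$-homomorphism $B\to B\otimes B(H_s)$ (in particular a contraction); this is where~\eqref{eYD1} is used. Substituting $x=u^s_{kl}$ in~\eqref{eYD1} and summing against the $\omega^s_{kl}$ yields, after reindexing,
$$
(\alpha\otimes\iota)\big(\beta_s(a)\big)=W_s\,\big((\iota\otimes\beta_s)\alpha(a)\big)\,W_s^*\qquad(a\in\B),
$$
where $W_s=\sum_{ij}u^s_{ij}\otimes1\otimes e_{ij}\in C(G)\otimes B\otimes B(H_s)$ is the corepresentation $U_s$ placed in the coefficient and $B(H_s)$ legs. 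Equivalently, with $\alpha_s:=\Ad(W_s^*)\circ(\alpha\otimes\iota)$, the perturbation of $\alpha\otimes\iota$ by the cocycle associated with $U_s$, the identity says that $\beta_s\colon(B,\alpha)\to(B\otimes B(H_s),\alpha_s)$ is $G$-equivariant. One checks that $\alpha_s$ is again a continuous action and that $(B\otimes B(H_s),\alpha_s)$ is a reduced $G$-C$^*$-algebra: $(h\otimes\iota)\alpha_s$ is a positive conditional expectation, faithful on the regular subalgebra $\B\otimes B(H_s)$ because the Haar state $h$ is faithful and $B(H_s)$ is finite dimensional. Since $\beta_s(\B)\subset\B\otimes B(H_s)$, which lies in the regular subalgebra of $(B\otimes B(H_s),\alpha_s)$, the extension now follows from the general fact --- built into the categorical reconstruction underlying Theorem~\ref{tactions}, see \cite{MR3121622} --- that a $G$-equivariant $*$-homomorphism between the regular subalgebras of reduced $G$-C$^*$-algebras extends uniquely to a $*$-homomorphism of the C$^*$-algebras.

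Setting $\beta:=(\beta_s)_s$, each $\beta_s$ is contractive, so $\|\beta(a)\|=\sup_s\|\beta_s(a)\|\le\|a\|<\infty$ and $\beta$ is a unital $*$-homomorphism $B\to\M(B\otimes c_0(\hat G))$ with $(\iota\otimes x)\beta(a)=x\rhd a$ on $\B$. It remains to check that $\beta$ is a continuous right action of $\hat G$. Coassociativity $(\beta\otimes\iota)\beta=(\iota\otimes\hat\Delta)\beta$ I would verify on the dense subalgebra $\B$, where --- since $\C[G]$-slices separate points and $\hat\Delta$ is dual to the multiplication of $\C[G]$ (with the convention of the footnote) --- it amounts to associativity $(xy)\rhd a=x\rhd(y\rhd a)$ of $\rhd$, and it then passes to $B$ by continuity. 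The counit identity $(\iota\otimes\hat\varepsilon)\beta=\iota$ is the tautology $\beta_e(a)=(1\rhd a)\otimes1=a$ at the trivial block $e$, and the density condition $\overline{\beta(B)(1\otimes c_0(\hat G))}=B\otimes c_0(\hat G)$ then follows formally: the translation map $b\otimes\omega\mapsto\beta(b)(1\otimes\omega)$ is a linear bijection of $B\otimes_\alg c_c(\hat G)$ onto itself (by the standard argument using the antipode of $\hat G$), so its range $B\otimes_\alg c_c(\hat G)$ is dense in $B\otimes c_0(\hat G)$. Finally, uniqueness: if $\beta'$ also satisfies the stated identity, then $(\iota\otimes x)(\beta(a)-\beta'(a))=0$ for all $x\in\C[G]$ and $a\in\B$, and as these slices separate points, $\beta=\beta'$ on $\B$, hence on $B$ by continuity.

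The step I expect to be the real obstacle is the second one: identifying the cocycle-perturbed $G$-action $\alpha_s$ on $B\otimes B(H_s)$ for which $\beta_s$ is equivariant, checking that it is again a reduced $G$-C$^*$-algebra, and thereby bringing to bear the extension of equivariant $*$-homomorphisms to the C$^*$-completions. Indeed, without~\eqref{eYD1} the module structure $\rhd$ need not extend to $B$ at all, so this is precisely the place where that hypothesis does its work.
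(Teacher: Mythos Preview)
Your proof is correct and follows essentially the same route as the paper's: define $\beta_s$ on $\B$ from the module structure, use \eqref{eYD1} to get the intertwining relation $(\alpha\otimes\iota)\beta_s=\Ad(U_{31})\circ(\iota\otimes\beta_s)\alpha$, and exploit this $G$-compatibility to pass to the C$^*$-completion. The only difference is cosmetic: where you identify the perturbed action $\alpha_s$ on $B\otimes B(H_s)$ and invoke the extension of equivariant $*$-homomorphisms between regular subalgebras (essentially \cite{MR3121622}*{Proposition~4.5}), the paper instead pulls the action back to the norm $\|a\|'=\max\{\|a\|,\|\beta_U(a)\|\}$ on $\B$ and invokes the uniqueness of a C$^*$-norm compatible with the $G$-action (\cite{MR3121622}*{Proposition~4.4}) --- two packagings of the same mechanism.
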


\bp Let us show first that for any finite dimensional unitary representation $U=\sum_{i,j}m_{ij}\otimes u_{ij}$ of~$G$, where $m_{ij}$ are matrix units in $B(H_U)$, there exists a unital $*$-homomorphism $\beta_U\colon B\to B\otimes B(H_U)$ such that
$$
\beta_U(a)=\sum_{i,j}(u_{ij}\rhd a)\otimes m_{ij}\ \ \text{for all}\ \ a\in\B.
$$
From the assumption that $\B$ is a $\C[G]$-module algebra we immediately get that $\beta_U\colon\B\to \B\otimes B(H_U)$ is a unital homomorphism. Condition \eqref{estar} implies that this homomorphism is $*$-preserving. Thus, all we have to do is to show that $\beta_U$ extends to a $*$-homomorphism $B\to B\otimes B(H_U)$. For this observe that the Yetter--Drinfeld condition \eqref{eYD1} implies that
$$
(\alpha\otimes\iota)\beta_U(a)=U_{31}(\iota\otimes\beta_U)\alpha(a)U^*_{31}.
$$
It follows that if we let $B_U$ to be the norm closure of $\beta_U(\B)$ in $B\otimes B(H_U)$, then the restriction of the map
$$
B\otimes B(H_U)\ni y\mapsto U_{31}^*(\alpha\otimes\iota)(y)U_{31}\in C(G)\otimes B\otimes B(H_U)
$$
to $B_U$ gives us a well-defined unital $*$-homomorphism $\gamma\colon B_U\to C(G)\otimes B_U$. Furthermore, since $\gamma(\beta_U(a))=(\iota\otimes\beta_U)\alpha(a)$ for $a\in\B$, the map $\gamma$ defines a continuous action of $G$ on $B_U$. It follows that if we define a new C$^*$-norm $\|\cdot\|'$ on $\B$ by
$$
\|a\|'=\max\{\|a\|,\|\beta_U(a)\|\},
$$
then the action $\alpha$ of $G$ on $\B$ extends to a continuous action on the completion of $\B$ in this norm. But according to \cite{MR3121622}*{Proposition~4.4} a C$^*$-norm with such property is unique. Hence $\|a\|'=\|a\|$ for all $a\in\B$, and therefore the map $\beta_U$ extends by continuity to $B$.

\smallskip

Since $c_0(\hat G)\cong c_0\mhyph\bigoplus_s B(H_s)$, the homomorphisms $\beta_{U_s}$ define a unital $*$-homomorphism $\beta\colon B\to \M(B\otimes c_0(\hat G))=\ell^\infty\mhyph\bigoplus_s (B\otimes B(H_s))$ such that $(\iota\otimes x)\beta(a)=x\rhd a$  for all $x\in\C[G]$ and $a\in\B$. It is then straightforward to check that $\beta$ is a continuous action. The uniqueness is also clear.
\ep

\bigskip

\section{Yetter--Drinfeld algebras and tensor functors}
\label{sec:equiv-modul-over}

In this section we prove our main result, a categorical description of a class of Yetter--Drinfeld C$^*$-algebras.

\subsection{Two categories}\label{stwocategories} A Yetter--Drinfeld $G$-C$^*$-algebra $B$ is said to be {\em braided-commutative} if for all $a,b\in\B$ we have
\begin{equation}\label{eBC}
ab=b_{(2)}(S^{-1}(b_{(1)})\rhd a).
\end{equation}
When $b$ is in the fixed point algebra $A=\B^G$, the right hand side reduces to $b a$, and we see that $A$ is contained in the center of $\B$.

\smallskip

The following theorem is our principal result. A closely related result in the purely algebraic framework has been obtained by Brugui\`{e}res and Natale~\cite{MR2863377}.

\begin{theorem} \label{tcatch}
Let $G$ be a reduced compact quantum group.  Then the following two categories are equivalent:
\begin{enumerate}
\item\label{tcatch-item-alg}
The category $\YD(G)$ of unital braided-commutative Yetter--Drinfeld $G$-C$^*$-algebras with unital $G$- and $\hat G$-equivariant $*$-homomorphisms as morphisms.
\item\label{tcatch-item-cat}
The category $\Tens(\Rep G)$ of pairs $(\CC,\E)$, where $\CC$ is a C$^*$-tensor category and $\E\colon\Rep G\to\CC$ is a unitary tensor functor such that $\CC$ is generated by the image of $\E$. The set of morphisms  $(\CC,\E)\to(\CC',\E')$  in this category is the set of equivalence classes of pairs  $(\F,\eta)$, where $\F$ is a unitary tensor functor $\F\colon\CC\to\CC'$ and $\eta$ is a natural unitary monoidal isomorphism $\eta \colon \F\E \to \E'$.
\end{enumerate}
Moreover, given a morphism $[(\F,\eta)]\colon (\CC,\E)\to(\CC',\E')$, the corresponding homomorphism of Yetter--Drinfeld C$^*$-algebras is injective if and only if $\F$ is faithful, and it is surjective if and only if $\F$ is~full.
\end{theorem}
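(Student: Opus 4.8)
The plan is to build both functors on top of Theorem~\ref{tactions}, adding the monoidal layer by hand: on the algebra side the extra data is the braided-commutative Yetter--Drinfeld structure, while on the categorical side it is a C$^*$-tensor structure on the module category for which $U\mapsto B\times U$ is a tensor functor. I would work throughout with the concrete models $\CC_B\simeq\D_B$ and the reconstruction $\B=\bigoplus_s(\bar H_s\otimes\D(\un,U_s))$ of Sections~\ref{sec:from-algebra-categ}--\ref{sec:tann-krein-reconstr}, so that the underlying equivalence of categories is already in place and only the tensor structures must be matched.

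\emph{From tensor functors to Yetter--Drinfeld algebras.} Given a generating unitary tensor functor $\E\colon\Rep G\to\CC$, regard $\CC$ as a right $(\Rep G)$-module category via $X\times U=X\otimes\E(U)$ with generating object $\E(\un)$, and let $B$ be the $G$-C$^*$-algebra produced by Theorem~\ref{tactions}, so that $\B\cong\bigoplus_s\bigl(\bar H_s\otimes\CC(\E(\un),\E(U_s))\bigr)$ with coaction~\eqref{eaction}. The action of $\hat G$ is manufactured from the monoidal structure: for $T\in\CC(\E(\un),\E(U))$ the two natural families $\E(V)\otimes T$ and $T\otimes\E(V)$, transported through the monoidal constraints of $\E$ into $\CC(\E(V),\E(V\circt U))$ and $\CC(\E(V),\E(U\circt V))$, differ by a natural isomorphism, and contracting it against matrix coefficients of $V=U_s$ yields the $\rhd$-action of $c_0(\hat G)=c_0\mhyph\bigoplus_s B(H_s)$ on the generators $\pi(\bar\xi\otimes T)$. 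One then checks~\eqref{estar}, the Yetter--Drinfeld identity~\eqref{eYD1} and braided commutativity~\eqref{eBC}; each becomes an identity between morphisms in $\CC$ forced by naturality of the monoidal constraints together with the explicit formulas~\eqref{eaction} and~\eqref{estarmod}. Finally Proposition~\ref{palgYD} upgrades $\rhd$ to a genuine action $\beta$ of $\hat G$, so $B\in\YD(G)$.

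\emph{From Yetter--Drinfeld algebras to tensor functors.} Given $B\in\YD(G)$, I would make $\D_B$ (equivalently $\CC_B$) into a C$^*$-tensor category by the recipe for modules over a commutative algebra object in a braided category: the unit object is $B=B\times\un$, the tensor product of $B\times U$ and $B\times V$ is $B\times(U\circt V)$, and on morphisms one juxtaposes $T\in\CC_B(U,V)\subset B(H_U,H_V)\otimes\B$ and $S\in\CC_B(U',V')\subset B(H_{U'},H_{V'})\otimes\B$, using~\eqref{eBC} as a half-braiding to reorder the two copies of $\B$. \textbf{This is the step I expect to be the main obstacle}: one must verify that the reordered element again satisfies the equivariance condition defining $\CC_B$, that $\otimes$ is associative --- here the Yetter--Drinfeld identity~\eqref{eYD1} and coassociativity of $\alpha$ enter --- and that the structure is compatible with the $*$-operation and positivity, so that one genuinely obtains a C$^*$-tensor category in which $\E\colon U\mapsto B\times U$ is a unitary tensor functor with the evident constraints $(B\times U)\otimes(B\times V)\cong B\times(U\circt V)$; that $\E$ generates $\CC_B$ is immediate from the stabilization argument.

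\emph{The equivalence and the functorial statement.} That the two constructions are mutually inverse follows by tracing definitions through the identifications of Sections~\ref{sec:from-algebra-categ}--\ref{sec:tann-krein-reconstr}: the underlying object is unchanged by Theorem~\ref{tactions}, and one checks, using the generation hypothesis, that the recovered $\rhd$-action, resp.\ tensor product, equals the given one. On morphisms, a class $[(\F,\eta)]$ forgets to a module functor $\CC\to\CC'$, hence by Theorem~\ref{tactions} to a $G$-equivariant $*$-homomorphism $f\colon B\to B'$, and $f$ is automatically $\hat G$-equivariant because the $\rhd$-actions were built from the monoidal data, which $\F$ preserves up to $\eta$; conversely a $G$- and $\hat G$-equivariant map induces a tensor functor, and the two assignments are inverse up to the stated equivalence of pairs. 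Under all these identifications $f$ corresponds to the functor $f_\#\colon\CC_B\to\CC_{B'}$ induced by $\iota\otimes f$ on hom-sets (Section~\ref{sec:from-algebra-categ}), and since faithfulness and fullness are preserved by natural isomorphisms and by composition with equivalences, the ``moreover'' reduces to showing that $f$ is injective iff $f_\#$ is faithful, and $f$ is surjective iff $f_\#$ is full. For the first: if $f$ is injective then $\iota\otimes f$ is injective on $B(H_U,H_V)\otimes B$ because $B(H_U,H_V)$ is finite-dimensional, so $f_\#$ is faithful; conversely, under the reconstruction the spectral component of $\B$ indexed by $s$ is $\bar H_s\otimes\CC_B(\un,U_s)$ and $f$ acts there as $\iota\otimes\bigl((\iota\otimes f)|_{\CC_B(\un,U_s)}\bigr)$, so $f_\#$ faithful makes $f$ injective on $\B$, whence $f$ is injective on $B$ by uniqueness of the C$^*$-completion of the regular subalgebra equipped with its algebraic action (\cite{MR3121622}*{Proposition~4.4}). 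For the second: averaging over $G$ with the Haar state gives a bounded projection of $\Hom_B(H_U\otimes B,H_V\otimes B)=B(H_U,H_V)\otimes B$ onto $\CC_B(U,V)$ that commutes with $\iota\otimes f$, so $f$ surjective forces $f_\#$ to be full; conversely, $f_\#$ full makes $f$ carry each spectral component of $\B$ onto the corresponding one of $\B'$, so $f(B)\supset\B'$ is dense, and since the range of a $*$-homomorphism of C$^*$-algebras is closed, $f$ is surjective.
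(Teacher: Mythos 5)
Your overall architecture coincides with the paper's: build on Theorem~\ref{tactions}, pass the monoidal layer back and forth between the braided-commutative Yetter--Drinfeld structure and a tensor structure on $\D_B\simeq\CC_B$, and settle the functorial and injective/faithful, surjective/full statements through the spectral decomposition of $\B$. The direction you single out as the main obstacle (making $\D_B$ monoidal via $X\times Y=Y\otimes_BX$, with braided commutativity supplying the left $B$-action $\xi\mapsto\xi_{(2)}(S^{-1}(\xi_{(1)})\rhd a)$ on each equivariant module) is exactly the content of Lemmas~\ref{lkey} and~\ref{liso}, and your checklist for it is the right one; note only that the natural unitary identifies $(H_V\otimes B)\otimes_B(H_U\otimes B)$ with $H_{U\circt V}\otimes B$, which is why the opposite tensor product is used. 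Your reduction of the ``moreover'' part to the spectral components, the use of the uniqueness of the compatible C$^*$-norm for injectivity, and the automatic $\hat G$-equivariance of the induced $*$-homomorphisms all match the paper's Section on functoriality (which phrases the reduction via Frobenius reciprocity and the generation hypothesis).

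The one step that would fail as written is your recipe for extracting $\rhd$ from a tensor functor $\E$. The morphisms $\E(V)\otimes T$ and $T\otimes\E(V)$ land in $\E(V\circt U)$ and $\E(U\circt V)$ respectively, and in a C$^*$-tensor category with no braiding these objects are not naturally isomorphic, so the two families cannot ``differ by a natural isomorphism'' that one could contract against matrix coefficients. What actually works --- and what the paper does in~\eqref{eq:from-tensor-to-dual-action} --- is the categorical form of the adjoint (Miyashita--Ulbrich) action $x\rhd a=x_{(1)}aS(x_{(2)})$: for $a=\pi(\bar\eta\otimes T)$ with $T\in\CC(\un,V)$ one forms $(\iota_U\otimes T\otimes\iota_{\bar U})\bar R_U\in\CC(\un,U\circt V\circt\bar U)$, i.e., one uses the left tensoring $\iota_U\otimes T$ (the genuinely new datum beyond the module structure) closed up by the conjugate $\bar U$ and the solutions $R_U,\bar R_U$ of the conjugate equations, and then contracts against $\bar\xi_i\otimes\overline{\rho^{-1/2}\xi_j}$. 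Correspondingly, the verifications of~\eqref{estar}, \eqref{eYD1} and~\eqref{eBC} rest on the conjugate equations and the behaviour of $\rho$ under conjugation (and on keeping track of the canonical equivalences $\overline{U\circt W}\cong\bar W\circt\bar U$), not merely on naturality of the monoidal constraints. With that correction your plan is the paper's proof.
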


The condition that $\CC$ is generated by the image of $\E$ means that
any object in $\CC$ is isomorphic to a subobject of $\E(U)$ for some
$U\in\Rep G$.  We remind the reader that we assume that C$^*$-categories are
closed under subobjects. We also stress that we do not assume that the
unit in $\CC$ is simple. In fact, as will be clear from the proof, the
C$^*$-algebra $\End_\CC(\un)$ is exactly the fixed point algebra $B^G$
in the C$^*$-algebra $B$ corresponding to~$(\CC,\E)$.

We have to explain how we define the equivalence relation on pairs $(\F,\eta)$. Assume $(\F,\eta)$ is a pair consisting of a unitary tensor functor $\F\colon\CC\to\CC'$ and a natural unitary monoidal isomorphism $\eta \colon \F\E \to \E'$. Then, for all objects $U$ and $V$ in $\Rep G$, we get linear maps
$$
\CC(\E(U),\E(V))\to\CC'(\E'(U),\E'(V)),\ \ T\mapsto \eta_V\F(T)\eta_U^{-1}.
$$
We say that two pairs $(\F,\eta)$ and $(\tilde\F,\tilde\eta)$ are equivalent, if the corresponding maps $\CC(\E(U),\E(V))\to\CC'(\E'(U),\E'(V))$ are equal for all $U$ and $V$.

\smallskip

A somewhat more concrete way of thinking of the category $\Tens(\Rep G)$ of pairs $(\CC,\E)$ is as follows. Assume~$(\CC,\E)$ is such a pair. First of all observe that the functor $\E$ is automatically faithful by semisimplicity and existence of conjugates in $\Rep G$. Then replacing the pair $(\CC,\E)$ by an isomorphic one, we may assume that $\CC$ is a strict C$^*$-tensor category containing $\Rep G$ and $\E$ is simply the embedding functor. Namely, similarly to our discussion in Section~\ref{sec:tann-krein-reconstr}, define new sets of morphisms between objects $U$ and $V$ in $\Rep G$ as $\CC(\E(U),\E(V))$ and then complete the new category we thus obtain with respect to subobjects.

Assume now that we have two strict C$^*$-tensor categories $\CC$ and $\CC'$ containing $\Rep G$, and consider the embedding functors $\E\colon\Rep G\to \CC$ and $\E'\colon\Rep G\to\CC'$. Assume $[(\F,\eta)]\colon (\CC,\E)\to(\CC',\E')$ is a morphism. This means that the unitary isomorphisms $\eta_U\colon \F(U)\to U$ in $\CC'$ are such that $\F(T)=\eta_V^{-1}T\eta_U$ for any morphism $T\colon U\to V$ in $\Rep G$, and the morphisms $$\F_{2;U,V}\colon \F(U)\otimes\F(V)\to\F(U\otimes V)$$ defining the tensor structure of $\F$ are given by $\F_{2;U,V}=\eta_{U\circt V}^{-1}(\eta_U\otimes\eta_V)$. We can then define a new unitary tensor functor $\tilde\F$ from the full subcategory of $\CC$ formed by the objects in $\Rep G\subset\CC$ into $\CC'$ by letting $\tilde\F(U)=U$, $\tilde\F(T)=\eta_V\F(T)\eta_U^{-1}$ for $T\in\CC(U,V)$, and $\tilde\F_{2;U,V}=\iota$.  This functor can be extended to $\CC$, by sending any subobject $X \subset U$ with corresponding projection $p_X \in \End_\CC(U)$ to an object corresponding to the projection $\tilde\F(p_X)\in \End_{\CC'}(U)$. Such an extension is unique up to a natural unitary monoidal isomorphism. Then by definition $[(\F,\eta)]=[(\tilde\F,\iota)]$.

Therefore morphisms $(\CC,\E)\to(\CC',\E')$ are equivalence classes of unitary tensor functors $\F\colon\CC\to\CC'$ such that $\F$ is the identity functor on $\Rep G\subset\CC$ and $\F_{2;U,V}=\iota$ for all objects $U$ and $V$ in $\Rep G$. Two such functors~$\F$ and~$\G$ are equivalent, or in other words they define the same morphism, if $\F(T)=\G(T)$ for all morphisms $T\in\CC(U,V)$ and all objects $U$ and $V$ in $\Rep G$.

\smallskip

The rest of this section is devoted to the proof of Theorem~\ref{tcatch}.

\subsection{From Yetter--Drinfeld algebras to tensor categories}
\label{sec:from-yetter-drinfeld}

In this subsection the assumption that $G$ is reduced will not be important.

Assume that $B$ is a braided-commutative Yetter--Drinfeld
$G$-C$^*$-algebra. Consider the category~$\D_B$ of $G$-equivariant
finitely generated right Hilbert $B$-modules discussed in
Section~\ref{sec:from-algebra-categ}. Then $\D_B$ can be turned into
a C$^*$-tensor category. This construction is known for the $D(G)$-equivariant $B$-modules in the purely algebraic approach~\citelist{\cite{MR1289088}\cite{MR1291020}}, and our key observation is that the same formula works for the $G$-equivariant modules, see also~\cite{MR3039775}*{Section~3.7}. Let us say that, for $X \in \D_B$, a vector $\xi \in X$ is regular if $\delta_X(\xi)$ lies in the algebraic tensor product $\C[G]\otimes_\alg X$.

\begin{lemma} \label{lkey}
Assume that $X$ is a $G$-equivariant finitely generated right Hilbert $B$-module, and $\X$ be its subspace of regular vectors.  Then there exists a unique unital $*$-homomorphism $\pi_X\colon B\to\End_B(X)$ such that $\pi_X(a)\xi=\xi_{(2)}(S^{-1}(\xi_{(1)})\rhd a)$ for all $a\in \B$ and $\xi\in\X$. Furthermore, we have $\delta_X(\pi_X(a)\xi)=(\iota\otimes\pi_X)\alpha(a)\delta_X(\xi)$ for all $a\in B$ and $\xi\in X$.
\end{lemma}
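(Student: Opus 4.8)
The plan is to define $\pi_X$ first on regular vectors by the stated formula, show it is well-defined and multiplicative there, and then extend by a norm estimate. First I would observe that the regular vectors $\X$ form a dense $\B$-submodule of $X$ (this follows from continuity of $\delta_X$ exactly as for the regular subalgebra $\B\subset B$), so that $X$ is the completion of $\X$ as a pre-Hilbert $\B$-module. For fixed $a\in\B$, define $\pi_X(a)\xi=\xi_{(2)}(S^{-1}(\xi_{(1)})\rhd a)$ for $\xi\in\X$; one checks using comultiplicativity $(\Delta\otimes\iota)\delta_X=(\iota\otimes\delta_X)\delta_X$ that this again lands in $\X$, so $\pi_X(a)$ is a linear endomorphism of $\X$. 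The multiplicativity $\pi_X(a)\pi_X(b)=\pi_X(ab)$ on $\X$ is a computation using the braided-commutativity relation \eqref{eBC} and the fact that $\rhd$ is a module-algebra action; it is the same manipulation that appears in the classical $D(G)$-equivariant setting, so I would cite \cite{MR1289088}, \cite{MR1291020} and \cite{MR3039775}*{Section~3.7} and indicate only that the Yetter--Drinfeld identity \eqref{eYD1} is used to rewrite $\delta_X(\xi_{(2)})$.

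Next I would establish that $\pi_X(a)$ is adjointable on $\X$ with $\pi_X(a)^*=\pi_X(a^*)$, using \eqref{estar} and the compatibility $\langle\delta_X(\xi),\delta_X(\zeta)\rangle=\alpha(\langle\xi,\zeta\rangle)$ together with the formula for $R_U$ (equivalently the invariance of the Haar state). Having a $*$-preserving, unit-preserving, multiplicative map $\B\to\End_\B(\X)$, I then want the bound $\|\pi_X(a)\xi\|\le\|a\|\,\|\xi\|$ so that it extends to $B\to\End_B(X)$. For this it is cleanest to exhibit $\pi_X$ as (the restriction of) an honest C$^*$-algebra representation: realizing $X$ as a direct summand of $B\times U$ for some $U\in\Rep G$ via the stabilization argument, one has $\End_B(B\times U)\cong\CC_B(U,U)\subset B(H_U)\otimes B$, and one identifies $\pi_{B\times U}$ with $(\iota\otimes\beta)$ composed with a flip and the $\hat G$-coaction $\beta$ from Proposition~\ref{palgYD} (using that $\rhd=(\iota\otimes\,\cdot\,)\beta$), which is manifestly a $*$-homomorphism into $B(H_U)\otimes\M(B\otimes c_0(\hat G))$; restricting to the summand $X$ and using that $\beta$ is norm-continuous gives the estimate. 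Uniqueness is immediate since $\X$ is dense in $X$ and $\B$ is dense in $B$.

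Finally, the $G$-equivariance identity $\delta_X(\pi_X(a)\xi)=(\iota\otimes\pi_X)\alpha(a)\,\delta_X(\xi)$: on regular vectors $\xi\in\X$ and regular $a\in\B$ this is again a direct computation from the definitions of $\pi_X$, $\alpha$ on $\B$, and the Yetter--Drinfeld condition \eqref{eYD1}, essentially the module-category analogue of the relation $(\alpha\otimes\iota)\beta_U(a)=U_{31}(\iota\otimes\beta_U)\alpha(a)U_{31}^*$ from the proof of Proposition~\ref{palgYD}; one then passes to general $a\in B$ and $\xi\in X$ by density and continuity of both sides. The main obstacle I anticipate is the norm estimate $\|\pi_X(a)\xi\|\le\|a\|\,\|\xi\|$: the formula for $\pi_X$ on $\X$ is purely algebraic and gives no a priori bound, so one really must either factor through the C$^*$-algebraic $\hat G$-action $\beta$ as above, or run a uniqueness-of-C$^*$-norm argument in the spirit of \cite{MR3121622}*{Proposition~4.4}; getting this bookkeeping right — in particular handling the non-unital multiplier algebra $\M(B\otimes c_0(\hat G))$ and the passage to a direct summand — is where the care is needed, whereas the identities \eqref{eBC}, \eqref{eYD1}, \eqref{estar} enter only through formal Sweedler-notation manipulations.
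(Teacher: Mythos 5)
Your proposal is correct and in essence coincides with the paper's proof: the decisive step in both is to obtain $\pi_X$ from the $*$-homomorphisms $\beta_U$ of Proposition~\ref{palgYD} after realizing $X$ as a direct summand of some $H_U\otimes B$, which supplies multiplicativity, $*$-preservation and the norm bound all at once. The paper simply reverses your order of presentation --- it defines $\pi_X(a)=\sum_{i,j}m_{ij}\otimes(u_{ij}\rhd a)$ on $H_U\otimes B$ from the outset and then uses braided commutativity only to check that this operator acts on regular vectors by the stated formula $\xi\mapsto\xi_{(2)}(S^{-1}(\xi_{(1)})\rhd a)$ (note that braided commutativity is really what makes that formula right-$B$-linear, rather than what gives multiplicativity, which follows from the module-algebra property alone) --- so your separate algebraic verifications of multiplicativity and adjointability on $\X$ become redundant once you invoke $\beta$.
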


\bp It suffices to consider the case $X=H_U\otimes B$ for an irreducible unitary representation $U=U_s$ of $G$, since any other module embeds into a finite direct sum of such modules as a direct summand. Then, using the action $\beta\colon B\to \M(B\otimes c_0(\hat G))$ and the projection $B\otimes c_0(\hat G)\to B\otimes B(H_s)\cong B(H_s)\otimes B$, we get a unital $*$-homomorphism $\pi_X\colon B\to \End_B(H_U\otimes B)=B(H_U)\otimes B$ such that
$$
\pi_X(a)=\sum_{i,j} m_{ij}\otimes (u_{ij}\rhd a),
$$
where $U=\sum_{i,j}m_{ij}\otimes u_{ij}$ and $m_{ij}$ are the matrix units in $B(H_U)$ defined by an orthonormal basis~$\{\xi_i\}_i$ in $H_U$. In order to see that this gives the correct definition of $\pi_X$, take $b\in\B$. Recalling definition~\eqref{ecomod} of $\delta_{H_U\otimes B}$, we get
$$
(\xi_i\otimes b)_{(1)}\otimes(\xi_i\otimes b)_{(2)}=\sum_{j}u_{ij}^*b_{(1)}\otimes(\xi_j\otimes b_{(2)}).
$$
Hence
$$
(\xi_i\otimes b)_{(2)}(S^{-1}((\xi_i\otimes b)_{(1)})\rhd a)=\sum_j\xi_j\otimes b_{(2)}(S^{-1}(u^*_{ij}b_{(1)})\rhd a)
=\sum_j\xi_j\otimes (S^{-1}(u^*_{ij})\rhd a)b,
$$
where the last equality follows by braided commutativity. Since $S^{-1}(u^*_{ij})=u_{ji}$, we see that $\pi_X(a)$ acts as stated in the formulation of the lemma.

\smallskip

In order to show that $\delta_X(\pi_X(a)\xi)=(\iota\otimes\pi_X)\alpha(a)\delta_X(\xi)$  we take an arbitrary $X$. It suffices to consider $a\in\B$. Then for $\xi\in\X$ we have
$$
\delta_X(\pi_X(a)\xi)=\delta_X(\xi_{(2)}(S^{-1}(\xi_{(1)})\rhd a))=\xi_{(2)}(S^{-1}(\xi_{(1)})\rhd a)_{(1)}\otimes \xi_{(3)}
(S^{-1}(\xi_{(1)})\rhd a)_{(2)}.
$$
Applying the Yetter--Drinfeld condition~\eqref{eYD1} we see that the last expression equals
$$
\xi_{(4)}S^{-1}(\xi_{(3)})a_{(1)}\xi_{(1)}\otimes \xi_{(5)}
(S^{-1}(\xi_{(2)})\rhd a_{(2)})=a_{(1)}\xi_{(1)}\otimes\xi_{(3)}(S^{-1}(\xi_{(2)})\rhd a_{(2)}),
$$
and this is exactly $(\iota\otimes\pi_X)\alpha(a)\delta_X(\xi)$.
\ep

If $X$ is as in the lemma, we conclude that $X$ has the structure of an $G$-equivariant $B$-$B$-correspon\-den\-ce.  If $f$ is a $G$-equivariant endomorphism of the right Hilbert $B$-module $X$, it is automatically a $B$-bimodule map because of the way the left action of $B$ is defined.  Therefore the category $\D_B$ can be considered as a full subcategory of the C$^*$-category of $G$-equivariant $B$-$B$-correspondences. The latter category has a natural C$^*$-tensor structure. In order to show that $\D_B$ forms a C$^*$-tensor subcategory it suffices to show that, given objects $X$ and $Y$ in $\D_B$, we have:
\begin{enumerate}
\item $X\otimes_B Y$ is a finitely generated right $B$-module;
\item the left $B$-module structure on $X\otimes_B Y$ induced by that on $X$ coincides with the left $B$-module structure given by Lemma~\ref{lkey} using the action of $G$ and the right $B$-module structure on $X\otimes_B Y$.
\end{enumerate}
The second property is a routine computation similar to the one in the proof of the second part of Lemma~\ref{lkey}, so we omit it. In order to check (i) it suffices to consider modules of the form $H_U\otimes B$. For such modules we have the following more precise result.

\begin{lemma}\label{liso}
For any finite dimensional unitary representations $U$ and $V$ of $G$, the map
$$
T_{U,V}\colon (H_V\otimes  B)\otimes_ B(H_U\otimes B)\to H_{U \circt V}\otimes B,\ \ (\zeta\otimes b)\otimes(\xi\otimes a)\mapsto \xi_{(2)}\otimes\zeta\otimes (S^{-1}(\xi_{(1)})\rhd b)a,
$$
is a $G$-equivariant unitary isomorphism of right Hilbert $B$-modules. Furthermore, the isomorphisms~$T_{U,V}$ have the property $T_{U\circt V,W}(\iota\otimes T_{U,V})=T_{U,V\circt W}(T_{V,W}\otimes\iota)$.
\end{lemma}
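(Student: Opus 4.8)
The plan is to realize $T_{U,V}$ as the composition of a ``freeness'' isomorphism with a flip of tensor legs, so that all of its analytic content is inherited from Lemma~\ref{lkey}. First I would introduce the auxiliary map
$$
j=j_{U,V}\colon H_V\otimes H_U\otimes B\longrightarrow (H_V\otimes B)\otimes_B(H_U\otimes B),\qquad \zeta\otimes\xi\otimes a\mapsto (\zeta\otimes 1_B)\otimes(\xi\otimes a),
$$
where on the right the interior tensor product is formed using the left $B$-module structure $\pi_{H_U\otimes B}$ of Lemma~\ref{lkey}. This is manifestly a well-defined right $B$-linear map; it is isometric because $\langle\zeta\otimes 1_B,\zeta'\otimes 1_B\rangle_B=(\zeta',\zeta)1_B$ and $\pi_{H_U\otimes B}$ is a unital $*$-homomorphism into the adjointable operators; and it has dense range because any elementary tensor $(\zeta\otimes b)\otimes(\xi\otimes a)$ equals $(\zeta\otimes 1_B)\otimes\pi_{H_U\otimes B}(b)(\xi\otimes a)$ with $\pi_{H_U\otimes B}(b)(\xi\otimes a)\in H_U\otimes B$. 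Hence $j$ is a unitary isomorphism of right Hilbert $B$-modules; in particular $(H_V\otimes B)\otimes_B(H_U\otimes B)$ is finitely generated, which is the point (i) needed above.

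Writing $U=\sum_{i,j}m_{ij}\otimes u_{ij}$ one has, by Lemma~\ref{lkey}, $\pi_{H_U\otimes B}(b)(\xi_k\otimes a)=\sum_i\xi_i\otimes (u_{ik}\rhd b)a$, so $j^{-1}\big((\zeta\otimes b)\otimes(\xi_k\otimes a)\big)=\sum_i\zeta\otimes\xi_i\otimes(u_{ik}\rhd b)a$; on the other hand, reading $\xi_{(1)}\otimes\xi_{(2)}$ in the statement as the coaction $\xi_k\mapsto\sum_j u_{kj}^*\otimes\xi_j$ on $H_U$ and using $S^{-1}(u_{kj}^*)=u_{jk}$, the defining formula gives $T_{U,V}\big((\zeta\otimes b)\otimes(\xi_k\otimes a)\big)=\sum_j\xi_j\otimes\zeta\otimes(u_{jk}\rhd b)a$. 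Comparing, $T_{U,V}\circ j$ is the leg flip $\zeta\otimes\xi\otimes a\mapsto\xi\otimes\zeta\otimes a$ from $H_V\otimes H_U\otimes B$ onto $H_U\otimes H_V\otimes B=H_{U\circt V}\otimes B$, which is obviously a unitary isomorphism of right Hilbert $B$-modules. Hence $T_{U,V}=(\text{flip})\circ j^{-1}$ is such an isomorphism and extends continuously to the completed interior tensor product; in particular the stated formula automatically respects the $B$-balancing, which one may also check directly from the module-algebra identity $u_{jk}\rhd(bc)=\sum_i(u_{ji}\rhd b)(u_{ik}\rhd c)$.

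For $G$-equivariance I would verify, equivalently (since $T_{U,V}=(\text{flip})\circ j^{-1}$), that $j$ intertwines the coaction on the interior tensor product of $G$-equivariant correspondences with the coaction on $H_V\otimes H_U\otimes B$ obtained by transporting $\delta_{H_{U\circt V}\otimes B}$ along the flip: a short computation with $U\circt V=U_{13}V_{23}$ shows that both send $\zeta_l\otimes\xi_k\otimes a$ to the same element of $C(G)\otimes H_V\otimes H_U\otimes B$. The essential input is the last assertion of Lemma~\ref{lkey}, that $\delta_{H_U\otimes B}$ intertwines $\pi_{H_U\otimes B}$ with $(\iota\otimes\pi_{H_U\otimes B})\alpha$, which is precisely what makes the coaction on the interior tensor product well-defined in the first place. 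Finally, for the pentagon-type identity I would use the fact --- visible from the second step and from $x\rhd 1_B=\eps(x)1_B$ --- that for any representations $P,Q$ the map $T_{P,Q}$ sends $(\zeta\otimes 1_B)\otimes(\xi\otimes a)$ to $\xi\otimes\zeta\otimes a$. Applying this twice, both $T_{U\circt V,W}(\iota\otimes T_{U,V})$ and $T_{U,V\circt W}(T_{V,W}\otimes\iota)$ send $(\omega\otimes 1_B)\otimes(\zeta\otimes 1_B)\otimes(\xi\otimes a)$ to $\xi\otimes\zeta\otimes\omega\otimes a\in H_{U\circt V\circt W}\otimes B$; since such elements span a dense subspace of the triple interior tensor product and both composites are bounded, they coincide.

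I expect the only real difficulty to be bookkeeping rather than mathematics: one must keep the left/right coaction conventions on $H_U$ and on tensor products straight, and one uses Lemma~\ref{lkey} in two distinct ways --- its first assertion (that $\pi_{H_U\otimes B}$ is a $*$-homomorphism into adjointable operators, which makes both the isometry of $j$ and the very definition of the interior tensor product as a Hilbert module work) and its last assertion (the intertwining identity, which underpins both the coaction on the interior tensor product and the $G$-equivariance of $T_{U,V}$). Once these are in place, everything reduces to direct computations with $U\circt V=U_{13}V_{23}$ and the module-algebra axioms for $\rhd$.
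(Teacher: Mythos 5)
Your proof is correct, and for the unitarity and the coherence identity it is essentially the paper's argument in a more structural dress: the paper's opening observation that $\xi_{(2)}\otimes(S^{-1}(\xi_{(1)})\rhd b)=b(\xi\otimes 1)$ is precisely your identification of $T_{U,V}\circ j$ with the leg flip, and both proofs then get isometry from the computation on $(\zeta\otimes 1)\otimes(\xi\otimes 1)$ plus density, and verify the pentagon identity on the same spanning set $(\omega\otimes 1)\otimes(\zeta\otimes 1)\otimes(\xi\otimes a)$ using $x\rhd 1=\eps(x)1$. Where you genuinely diverge is the $G$-equivariance: the paper redoes a direct Sweedler computation with the Yetter--Drinfeld condition \eqref{eYD1} on a general elementary tensor $(\zeta\otimes b)\otimes(\xi\otimes a)$, whereas you push all of the Yetter--Drinfeld content into the well-definedness of the coaction $\eta\otimes\mu\mapsto\eta_{(1)}\mu_{(1)}\otimes\eta_{(2)}\otimes\mu_{(2)}$ on the balanced tensor product (which rests on the intertwining assertion $\delta_X(\pi_X(a)\xi)=(\iota\otimes\pi_X)\alpha(a)\delta_X(\xi)$ of Lemma~\ref{lkey}, itself proved via \eqref{eYD1}) and then only check the intertwining on the generators $(\zeta_l\otimes 1)\otimes(\xi_k\otimes a)$, where $\rhd$ never appears. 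I verified that computation: with $(U\circt V)^*=V^*_{23}U^*_{13}$ both sides give $\sum_{j,m}v^*_{lm}u^*_{kj}a_{(1)}\otimes(\zeta_m\otimes 1)\otimes(\xi_j\otimes a_{(2)})$, so the reduction works. Your route avoids repeating the \eqref{eYD1} manipulation at the cost of making explicit the (standard, but not free) fact that the equivariant-correspondence tensor product carries a well-defined coaction; the paper's direct computation is self-contained at that point but longer. Either way the lemma is fully established.
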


Recall that the $\C[G]$-comodule structure on $H_U$ is given by $\xi\mapsto \xi_{(1)}\otimes \xi_{(2)}=U^*_{21}(1\otimes\xi)$.

\bp[Proof of Lemma~\ref{liso}] Since $\xi_{(2)}\otimes (S^{-1}(\xi_{(1)})\rhd b)=b(\xi\otimes 1)$, it is clear that the map~$T_{U,V}$ defines a right $\B$-module isomorphism
$$
(H_V\otimes \B)\otimes_\B(H_U\otimes\B)\cong H_{U \circt V}\otimes \B.
$$
It is also obvious that $T_{U,V}$ is isometric on the subspace spanned by vectors of the form $(\zeta\otimes 1)\otimes(\xi\otimes 1)$. Since such vectors generate $(H_V\otimes \B)\otimes_\B(H_U\otimes\B)$ as a right $\B$-module, and this module is dense in $(H_V\otimes B)\otimes_B(H_U\otimes B)$, it follows that $T_{U,V}$ extends by continuity to a unitary isomorphism of right Hilbert $B$-modules.

\smallskip

Next let us check the $G$-equivariance. The $\C[G]$-comodule structure on $(H_V\otimes \B)\otimes_\B(H_U\otimes\B)$ is given~by
\begin{align*}
\delta((\zeta\otimes b)\otimes (\xi\otimes a))&=(\zeta\otimes b)_{(1)}(\xi\otimes a)_{(1)}\otimes(\zeta\otimes b)_{(2)}\otimes (\xi\otimes a)_{(2)}\\
&=\zeta_{(1)}b_{(1)}\xi_{(1)}a_{(1)}\otimes(\zeta_{(2)}\otimes b_{(2)})\otimes (\xi_{(2)}\otimes a_{(2)}).
\end{align*}
Applying $\iota\otimes T_{U,V}$ we get
$$
\zeta_{(1)}b_{(1)}\xi_{(1)}a_{(1)}\otimes\xi_{(3)}\otimes\zeta_{(2)}\otimes (S^{-1}(\xi_{(2)})\rhd b_{(2)})a_{(2)}.
$$
On the other hand, using the same symbol $\delta$ for the comodule structure on $H_{U \circt V}\otimes\B$, since $(U \circt V)^*=V^*_{23}U^*_{13}$ we get

\smallskip
$\displaystyle
(\iota\otimes\delta)T_{U,V}((\zeta\otimes b)\otimes (\xi\otimes a))
$
\begin{align*}
&=\delta(\xi_{(2)}\otimes\zeta\otimes(S^{-1}(\xi_{(1)})\rhd b)a)\\
&=\zeta_{(1)}\xi_{(2)}(S^{-1}(\xi_{(1)})\rhd b)_{(1)}a_{(1)}\otimes\xi_{(3)}\otimes\zeta_{(2)}\otimes (S^{-1}(\xi_{(1)})\rhd b)_{(2)}a_{(2)}.
\end{align*}
Applying \eqref{eYD1} we see that the last expression equals

\smallskip
$\displaystyle
\zeta_{(1)}\xi_{(2)}S^{-1}(\xi_{(1)})_{(1)}b_{(1)}S(S^{-1}(\xi_{(1)})_{(3)})a_{(1)} \otimes\xi_{(3)}\otimes\zeta_{(2)}\otimes (S^{-1}(\xi_{(2)})\rhd b_{(2)})a_{(2)}
$
\begin{align*}
&=\zeta_{(1)}\xi_{(4)}S^{-1}(\xi_{(3)})b_{(1)}\xi_{(1)}a_{(1)} \otimes\xi_{(5)}\otimes\zeta_{(2)}\otimes (S^{-1}(\xi_{(2)})\rhd b_{(2)})a_{(2)}\\
&=\zeta_{(1)}b_{(1)}\xi_{(1)}a_{(1)}\otimes\xi_{(3)}\otimes\zeta_{(2)}\otimes (S^{-1}(\xi_{(2)})\rhd b_{(2)})a_{(2)}.
\end{align*}
Therefore the map $T_{U,V}$ is indeed $G$-equivariant.

\smallskip

Finally, in order to prove the equality $T_{U \circt V,W}(\iota\otimes T_{U,V})=T_{U,V\circt W}(T_{V,W}\otimes\iota)$ it suffices to check it on tensor products of vectors of the form $\xi\otimes1$, since such tensor products generate a dense subspace of triple tensor products as right $\B$-modules. But for such vectors the statement is obvious.
\ep

Therefore the category $\D_B$ can be considered as a full C$^*$-tensor subcategory of the category of $G$-equivariant $B$-$B$-correspondences. In view of the previous lemma, it is convenient to replace the tensor product by the opposite one, so we put $X\times Y=Y\otimes_B X$.  Furthermore, the functor $\E_B\colon\Rep G\to\CC_B$ mapping~$U$ into the module $H_U\otimes B$, together with the unitary isomorphisms $T_{U,V}\colon \E_B(U)\otimes\E_B(V)\to \E_B(U\circt V)$ from Lemma~\ref{liso}, is a unitary tensor functor.  We have thus proved the following result.

\begin{theorem} \label{thm:brd-comm-YD-tensor-cat}
 Let $G$ be a compact quantum group and $B$ be a unital braided-commutative Yetter--Drinfeld $G$-C$^*$-algebra.  Then the $G$-equivariant finitely generated  right Hilbert $B$-modules form a C$^*$-tensor category $\D_B$ with tensor product $X \times Y=Y\otimes_BX$.  Furthermore, there is a unitary tensor functor $\E_B\colon\Rep G\to\D_B$ mapping~$U$ to the module $H_U\otimes B$.
\end{theorem}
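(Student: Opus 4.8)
The plan is to assemble Theorem~\ref{thm:brd-comm-YD-tensor-cat} from the three lemmas that precede it, together with the general nonsense about $G$-equivariant Hilbert bimodule correspondences. First I would recall that the category of $G$-equivariant $B$-$B$-correspondences carries a standard C$^*$-tensor structure given by the interior tensor product $X \otimes_B Y$, with unit object $B$ itself; this is classical and I would cite it rather than reprove it. The work is entirely in checking that the full subcategory $\D_B$ of $G$-equivariant finitely generated right Hilbert $B$-modules is closed under this tensor product, once each object is equipped with the canonical left $B$-action produced by Lemma~\ref{lkey}.

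The steps, in order: (1) Invoke Lemma~\ref{lkey} to equip every $X \in \D_B$ with a left $B$-action $\pi_X$, making $X$ a $G$-equivariant $B$-$B$-correspondence, and observe that every $G$-equivariant right-module endomorphism is automatically a bimodule map because of the intrinsic, action-determined nature of $\pi_X$ — hence $\D_B$ embeds as a \emph{full} subcategory of the correspondence category. (2) For finite generation of $X \otimes_B Y$: reduce to the generating objects $H_U \otimes B$ and $H_V \otimes B$ (any object of $\D_B$ is a direct summand of such a module by the stabilization argument recalled in Section~\ref{sec:from-algebra-categ}), then apply Lemma~\ref{liso}, which exhibits $(H_V \otimes B)\otimes_B (H_U \otimes B) \cong H_{U\circt V}\otimes B$ as an honest isomorphism in $\D_B$; finite generation and $G$-equivariance then follow, and closure under direct summands handles the general case. (3) Check that the left $B$-action on $X \otimes_B Y$ coming from the correspondence structure on $X$ agrees with the one Lemma~\ref{lkey} assigns to the right module $X \otimes_B Y$ via its $G$-action — this is the compatibility flagged in point (ii) before Lemma~\ref{liso}, a Sweedler-calculus verification of the same flavour as the second half of Lemma~\ref{lkey}'s proof, which I would state is routine and omit. (4) Define $X \times Y := Y \otimes_B X$ (the opposite tensor product, chosen so that the associativity constraint of Lemma~\ref{liso} points the right way), and conclude that $\D_B$ is a C$^*$-tensor subcategory.

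For the functor $\E_B$: set $\E_B(U) = H_U \otimes B$ on objects and the obvious map on morphisms, with tensor structure morphisms $\E_{B,2;U,V} := T_{U,V}\colon \E_B(U)\times\E_B(V) = \E_B(V)\otimes_B\E_B(U) \to H_{U\circt V}\otimes B = \E_B(U\circt V)$ from Lemma~\ref{liso}. Unitarity of these maps is part of Lemma~\ref{liso}; the hexagon/associativity coherence is exactly the identity $T_{U\circt V,W}(\iota\otimes T_{U,V}) = T_{U,V\circt W}(T_{V,W}\otimes\iota)$ proved there; naturality in $U$ and $V$ is immediate from the explicit formula for $T_{U,V}$ on regular vectors together with density; and the unit constraint $\E_B(\un) = B$ being the tensor unit is built in. Hence $\E_B$ is a unitary tensor functor.

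The main obstacle is already dispatched by Lemma~\ref{liso}: the delicate point is that the interior tensor product of two modules of the form $H_U \otimes B$ and $H_V \otimes B$ — which a priori only knows about the \emph{right} $B$-structure on the first factor for the balancing — in fact closes back up inside $\D_B$, and this works precisely because the left action furnished by braided commutativity (Lemma~\ref{lkey}) is the correct one to make the balancing produce $H_{U\circt V}\otimes B$ rather than something uncontrolled. So the real content of the theorem is the pair of lemmas; what remains here is bookkeeping — passing from the generating objects to arbitrary objects via direct summands, fixing the opposite-tensor convention so that associativity is strictly coherent, and verifying the functoriality axioms for $\E_B$, all of which are straightforward given Lemmas~\ref{lkey} and~\ref{liso}.
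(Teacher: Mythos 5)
Your proposal is correct and follows essentially the same route as the paper: Lemma~\ref{lkey} supplies the left $B$-action realizing $\D_B$ as a full subcategory of $G$-equivariant $B$-$B$-correspondences, Lemma~\ref{liso} handles finite generation on the generators $H_U\otimes B$ (with direct summands covering the general case) and provides the unitary tensor structure $T_{U,V}$ for $\E_B$, the compatibility of the two left actions on $X\otimes_B Y$ is noted as routine and omitted, and the opposite convention $X\times Y=Y\otimes_B X$ is adopted exactly as in the text. No discrepancies worth flagging.
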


Up to  an isomorphism, the pair $(\D_B,\E_B)$ can be more concretely described as follows. As we discussed in Section~\ref{sec:from-algebra-categ}, the category $\D_B$ is equivalent to the category $\CC_B$, which is the idempotent completion of the category with the same objects as in $\Rep G$, but with the new morphism sets
$$
\CC_B(U,V)\subset B(H_U,H_V)\otimes \B
$$
consisting of elements $T$ such that
$
V^*_{12}(\iota\otimes\alpha)(T)U_{12}=T_{13}.
$
We define the tensor product of objects in $\CC_B$ as in $\Rep G$,
and in order to completely describe the tensor structure it remains to write down a formula for the linear maps
$$
\CC_B(U,V)\otimes\CC_B(W,Z)\to\CC_B(U\circt W,V\circt Z).
$$
This can be done using Lemma~\ref{liso}. First, note that by the proof of Lemma~\ref{lkey}, if $U=\sum_{i,j}m_{ij}\otimes u_{ij}$ then for any $\xi\in H_U$ and $b\in\B$ we have
$$
\xi_{(2)}\otimes (S^{-1}(\xi_{(1)})\rhd b)=\sum_{ij}(m_{ij}\otimes (u_{ij}\rhd b))(\xi \otimes1).
$$
Therefore the map $T_{U,V}$ from Lemma~\ref{lkey} can be written as
$$
(\zeta\otimes b)\otimes(\xi\otimes a)\mapsto\sum_{i,j} m_{ij}\xi\otimes\zeta\otimes (u_{ij}\rhd b)a.
$$
It follows that given $T=\sum_l T_l\otimes b_l\in\CC_B(W,Z)$, the morphism $\iota\otimes T\in\CC_B(U\circt W,U\circt Z)$ considered as a map $H_{U\circt W}\otimes B\to H_{U \circt Z}\otimes B$ acts by
$$
\xi\otimes\zeta\otimes 1\mapsto T_{U,Z}\left(\sum_l (T_l\zeta\otimes b_l)\otimes(\xi\otimes 1)\right)=\sum_{i,j,l}m_{ij}\xi\otimes T_l\zeta\otimes (u_{ij}\rhd b_l).
$$
On the other hand, if $S=\sum_k S_k\otimes a_k\in\CC_B(U,V)$, then the morphism $S\otimes\iota\in\CC_B(U\circt Z,V\circt Z)$ considered as a map $H_{U\circt W}\otimes B\to H_{U\circt Z}\otimes B$ acts by
$$
\xi\otimes\zeta\otimes 1\mapsto T_{V,Z}\left(\sum_k (\zeta\otimes 1)\otimes (S_k\xi\otimes a_k)\right)
=\sum_k S_k\xi\otimes\zeta\otimes a_k.
$$
To summarize, the tensor structure on $\CC_B$ is described by the following rules:
\begin{align}
&\text{if}\ \ T=\sum_l T_l\otimes b_l\in\CC_B(W,Z),\ \ \text{then}\ \ \iota_U\otimes T=\sum_{i,j,l}m_{ij}\otimes T_l\otimes (u_{ij}\rhd b_l);\label{etensor1}\\
&\text{if}\ \ S\in\CC_B(U,V),\ \ \text{then}\ \ S\otimes\iota_Z=S_{13}.\label{etensor2}
\end{align}
In this picture the functor $\E_B\colon\Rep G\to\D_B$ becomes the strict tensor functor $\F_B\colon\Rep G\to\CC_B$ which is the identity map on objects, while on morphisms it is $T\mapsto T\otimes1$.

\subsection{From tensor categories to Yetter--Drinfeld algebras}
\label{sec:from-tens-categ}

Let us turn to the construction of a Yetter--Drinfeld algebra from a pair~$(\CC,\E)\in\Tens(\Rep (G))$. The category~$\CC$ can be considered as a right $(\Rep G)$-module category with the distinguished object~$\un$. Therefore by Theorem~\ref{tactions} we can construct a C$^*$-algebra $B = B_\CC$ together with a left continuous action $\alpha\colon B\to C(G)\otimes B$. Our goal is to prove the following.

\begin{theorem} \label{tfromcat}
The $G$-C$^*$-algebra $B$ corresponding to the $(\Rep G)$-module category $\CC$ with the distinguished object~$\un$ has a natural structure of a braided-commutative Yetter--Drinfeld C$^*$-algebra.
\end{theorem}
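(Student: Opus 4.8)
The plan is to equip the regular subalgebra $\B\subset B$ with a left $\C[G]$-module algebra structure $\rhd$ satisfying the $*$-compatibility \eqref{estar}, the Yetter--Drinfeld identity \eqref{eYD1} and the braided-commutativity identity \eqref{eBC}. Once this is done, Proposition~\ref{palgYD} promotes $\rhd$ to a continuous right $\hat G$-action $\beta$ on $B$ with $(\iota\otimes x)\beta(a)=x\rhd a$, and the three identities say precisely that $(B,\alpha,\beta)$ is a braided-commutative Yetter--Drinfeld $G$-C$^*$-algebra. This is the converse of Theorem~\ref{thm:brd-comm-YD-tensor-cat}, and the proof amounts to reading the tensor structure of $\CC$ off as the operation~$\rhd$.

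First I would adopt the concrete picture of Section~\ref{sec:tann-krein-reconstr}: replace $(\CC,\E)$ by an isomorphic pair in which $\CC$ is a strict C$^*$-tensor category containing $\Rep G$, $\E$ is the embedding, and the $(\Rep G)$-module structure on $\CC$ is right tensoring, $X\times U=X\otimes U$. Then $B=B_\CC$ has $\B=\bigoplus_s(\bar H_s\otimes\CC(\un,U_s))$ with product, involution and action $\alpha$ given by the reconstruction, and $\CC$ is equivalent, as a $(\Rep G)$-module category, to the category $\CC_B$ with morphism sets $\CC_B(U,V)\subset B(H_U,H_V)\otimes\B$, the equivalence being the identity on objects and given on morphisms by \eqref{eequiv}. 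The key point is that the C$^*$-tensor structure of $\CC$ transports through this equivalence to a C$^*$-tensor structure on $\CC_B$ extending the one of $\Rep G$ and compatible with right tensoring; in particular $S\otimes\iota_W=S_{13}$ for $S\in\CC_B(U,V)$, which is \eqref{etensor2}. All the remaining data of this tensor structure sits in the maps $\iota_U\otimes(-)\colon\CC_B(W,Z)\to\CC_B(U\circt W,U\circt Z)$, and I would define $\rhd$ by demanding that for $T=\sum_l T_l\otimes b_l\in\CC_B(W,Z)$ and $U=\sum_{i,j}m_{ij}\otimes u_{ij}$ we have $\iota_U\otimes T=\sum_{i,j,l}m_{ij}\otimes T_l\otimes(u_{ij}\rhd b_l)$, that is, \eqref{etensor1}. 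The first real task is to prove that the transported tensor structure is of this shape and that this determines a well-defined bilinear map $\rhd\colon\C[G]\times\B\to\B$: using conjugate equations one reduces to $W=\un$, where $\CC_B(\un,V)\subset H_V\otimes\B$ and the matrix units $m_{ij}$ separate the coefficients of $\iota_U\otimes T$; well-definedness then rests on the fact that every element of $\B$ arises, via \eqref{eequiv}, as a coefficient of the $\CC_B(\un,U_s)$-image of a morphism of $\CC$, so the value $u_{ij}\rhd b$ is forced and independent of the presentation. (Alternatively, one can give a closed categorical formula for $x\rhd a$ using $R_U,\bar R_U$ and the unitaries $\E(U)\otimes\E(V)\cong\E(U\circt V)$, and take that as the definition.)

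With $\rhd$ in hand, the module-algebra axioms follow formally: $x\rhd\un=\eps(x)\un$ is $\iota_U\otimes\iota_\un=\iota_U$; the $\C[G]$-action axiom follows from $\iota_U\otimes(\iota_V\otimes T)\cong\iota_{U\circt V}\otimes T$, which is tensoriality of $\E$; and $x\rhd(ab)=(x_{(1)}\rhd a)(x_{(2)}\rhd b)$ follows from functoriality of $\iota_U\otimes(-)$ together with the description of the product on $\B$ through $\pi$. The $*$-compatibility \eqref{estar} follows because $\iota_U\otimes(-)$ is a $*$-functor, combined with \eqref{estarmod}. The Yetter--Drinfeld identity \eqref{eYD1} is the statement that $\iota_U\otimes T$ satisfies the defining relation of $\CC_B$, namely $(\iota\otimes\alpha)(\iota_U\otimes T)=(U\circt V)_{12}(\iota_U\otimes T)_{13}U_{12}^*$ for $T\in\CC_B(\un,V)$; substituting \eqref{etensor1} and the relation $(\iota\otimes\alpha)(T)=V_{12}T_{13}$, and using \eqref{eaction}, reproduces \eqref{eYD1}. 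Finally, braided commutativity is the interchange law in $\CC$: since $\pi(\bar\xi\otimes T)\cdot\pi(\bar\zeta\otimes S)=\pi(\overline{\xi\otimes\zeta}\otimes(T\otimes\iota)S)$ and $(T\otimes\iota)S=(\iota\otimes S)T$ in any tensor category, expanding $\iota\otimes S$ by \eqref{etensor1} and re-expressing antipodes of matrix coefficients through the conjugate representation turns $(\iota\otimes S)T$ into the right-hand side $b_{(2)}(S^{-1}(b_{(1)})\rhd a)$ of \eqref{eBC}.

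The main obstacle is the combinatorial bookkeeping concentrated in the middle: showing that the transported tensor structure on $\CC_B$ really has the form \eqref{etensor1}--\eqref{etensor2}, equivalently that $\rhd$ is well-defined, and then matching the categorical conditions --- bifunctoriality and $*$-compatibility of the tensor product, the defining relation of the morphism spaces of $\CC_B$, and the interchange law --- against the Sweedler-notation identities \eqref{estar}, \eqref{eYD1}, \eqref{eBC} through the reconstruction maps \eqref{eaction}, \eqref{estarmod}, \eqref{eequiv}. Each identification is a nontrivial computation of the kind carried out in the proofs of Lemmas~\ref{lkey} and \ref{liso}; the cleanest organization is to first pin down $\rhd$ and \eqref{etensor1}--\eqref{etensor2}, after which the Yetter--Drinfeld and braided-commutativity identities are precisely the consistency conditions that those formulas must satisfy because $\CC_B$ is a C$^*$-tensor category.
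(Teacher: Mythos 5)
Your overall skeleton --- equip $\B$ with a left $\C[G]$-module algebra structure satisfying \eqref{estar}, \eqref{eYD1} and \eqref{eBC}, then invoke Proposition~\ref{palgYD} --- is exactly the paper's, but you organize the body of the argument quite differently. The paper (Section~\ref{sec:from-tens-categ}) \emph{defines} the action up front by the closed formula~\eqref{eq:from-tensor-to-dual-action} on the universal algebra $\tilde\B$, building $x\trhd a$ out of $(\iota\otimes T\otimes\iota)\bar R_U$, and then verifies the module-algebra axioms, \eqref{estar}, \eqref{eYD1} and \eqref{eBC} by direct computations with $R_U$, $\bar R_U$ and the combinatorics of conjugate objects (Lemma~\ref{lrhd}, Lemma~\ref{lYD} and the two unlabelled lemmas between them). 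You instead transport the tensor structure of $\CC$ to $\CC_B$ through the module-category equivalence~\eqref{eequiv}, assert that it takes the normal form \eqref{etensor1}--\eqref{etensor2}, read $\rhd$ off from the coefficients of $\iota_U\otimes(-)$, and derive the four identities from categorical facts: $*$-functoriality of $\iota_U\otimes(-)$ gives \eqref{estar}, membership of $\iota_U\otimes T$ in $\CC_B(U,U\circt V)$ gives \eqref{eYD1}, and the interchange law gives \eqref{eBC} (this last step does work out, using the antipode identity $\sum_j v_{jl}S^{-1}(v_{ij})=\delta_{il}1$). This is a genuinely illuminating reorganization: it explains \emph{why} each Yetter--Drinfeld axiom holds, as a consistency condition forced by $\CC_B$ being a C$^*$-tensor category.

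The cost is that essentially all of the work is concentrated in the one claim you treat most lightly: that the transported tensor structure really has the form \eqref{etensor1}--\eqref{etensor2} with a single well-defined bilinear $\rhd$. Equation~\eqref{etensor2} is indeed immediate from \eqref{eequiv} being an equivalence of $(\Rep G)$-module categories, and for $W=\un$ with $V$ irreducible the map $\rhd$ is well defined essentially by construction, since $\B_s\cong\bar H_s\otimes\CC(\un,U_s)$ linearly. But extending \eqref{etensor1} to arbitrary source objects $W$ --- which you already need for \eqref{estar}, since $T^*$ has source $V$, and for the module-algebra identity --- requires an explicit computation with \eqref{eequiv} and the conjugate morphisms. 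The paper does carry out exactly this computation, but only in Section~\ref{sec:end-proof-thm-tcatch}, where it shows $\F(\iota\otimes T)=\iota\otimes\F(T)$ by comparing against the already-defined formula~\eqref{eq:from-tensor-to-dual-action}; on your route you cannot cite that, and would have to reproduce the same $\bar R_U$-manipulations to pin down \eqref{etensor1} from scratch. In other words, your parenthetical ``alternative'' --- take the closed categorical formula involving $\bar R_U$ as the definition of $\rhd$ --- is in fact the paper's main route, and is arguably the more economical way to discharge the step your plan leaves open.
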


The construction of the Yetter--Drinfeld structure can be described for any pair $(\CC,\E)$, but in order to simplify the notation we assume that $\CC$ is strict, $\Rep G$ is a C$^*$-tensor subcategory of $\CC$ and~$\E$~is simply the embedding functor. This is enough by the discussion following the formulation of Theorem~\ref{tcatch}.

Recall from Section~\ref{sprelim} that the subalgebra $\B\subset B$ of regular elements is given by \eqref{eq:regular-subalg}. By Proposition~\ref{palgYD}, to prove the theorem we have to define a $\C[G]$-module algebra structure on $\B$ satisfying properties~\eqref{estar}, \eqref{eYD1}, and \eqref{eBC}.

In Section~\ref{sprelim} we also defined a `universal' algebra $\tilde\B=\bigoplus_U(\bar H_U\otimes\CC(\un,U))$, together with a homomorphism $\pi\colon\tilde\B\to\B$. Recall from Example~\ref{exfunalg} that we denote by $\widetilde{\C[G]}=\bigoplus_U(\bar H_U\otimes H_U)$ the algebra $\tilde\B$ corresponding to the forgetful fiber functor $\Rep G\to\Hilb_f$, and then the corresponding homomorphism $\pi_G\colon\widetilde{\C[G]}\to\C[G]$ maps $\bar\xi\otimes\zeta\in\bar H_U\otimes H_U$ into $((\cdot\,\zeta,\xi)\otimes\iota)(U)$.

Define a linear map
$$
\trhd\colon \widetilde{\C[G]}\otimes\tilde\B\to\tilde\B
$$
by letting, for $\bar\xi\otimes\zeta\in \bar H_U\otimes H_U$ and $\bar\eta\otimes T\in \bar H_V\otimes \CC(\un,V)$,
\begin{equation}
\label{eq:from-tensor-to-dual-action}
(\bar\xi\otimes\zeta)\trhd(\bar\eta\otimes T)=\overline{(\xi\otimes\eta\otimes\overline{\rho^{-1/2}\zeta})}\otimes(\iota\otimes T\otimes\iota)\bar R_U\in \bar H_{U \circt V\circt \bar U}\otimes \CC(\un,U\circt V\circt\bar U).
\end{equation}
We remind that $\bar R_U\colon\un\to U\circt\bar U$ is given by
$\bar R_U(1)=\sum_i\rho^{1/2}\xi_i\otimes\bar\xi_i$
for an orthonormal basis~$\{\xi_i\}$ in~$H_U$. Identifying $\C[G]$ with the subspace $\bigoplus_s (\bar H_s\otimes H_s)\subset\widetilde{\C[G]}$, we define a linear map $$\rhd\colon\C[G]\otimes\B\to \B\ \ \text{by letting}\ \ x\rhd a=\pi(x\trhd a)\ \ \text{for}\ \ x\in\C[G]\ \ \text{and}\ \ a\in\B.$$

\begin{lemma}\label{lrhd}
The map $\rhd$ defines a left $\C[G]$-module algebra structure on $\B$, and we have
$$\pi_G(x)\rhd \pi(a)=\pi(x\trhd a)\ \ \text{for all}\ \ x\in\widetilde{\C[G]}\ \ \text{and}\ \ a\in\tilde\B.$$
\end{lemma}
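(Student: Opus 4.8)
The key tool is the \emph{dinaturality} of the map $\pi$: for every morphism $S\colon U\to U'$ in $\Rep G$, every $\xi\in H_{U'}$ and every $T\in\CC(\un,U)$ one has
\[
\pi(\overline{S^{*}\xi}\otimes T)=\pi(\bar\xi\otimes ST),
\]
which follows at once from the defining formula of $\pi$ by decomposing $U$ into irreducibles. The plan is to prove the second assertion of the lemma, $\pi_G(x)\rhd\pi(a)=\pi(x\trhd a)$, first, since it is exactly this identity that allows one to transfer all computations from $\B$ and $\C[G]$ to the universal spaces $\tilde\B$ and $\widetilde{\C[G]}$, on which $\trhd$ is given by the concrete formula \eqref{eq:from-tensor-to-dual-action}. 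Unravelling the definition of $\rhd$, this identity says precisely that $\pi\circ\trhd\colon\widetilde{\C[G]}\otimes\tilde\B\to\B$ is unchanged under precomposition with $\pi_G\otimes\pi$, and I would deduce it by showing separately that $\pi\circ\trhd$ factors through $\id\otimes\pi$ and through $\pi_G\otimes\id$.

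For the factorization through $\id\otimes\pi$ I would take $a=\bar\eta\otimes T\in\bar H_V\otimes\CC(\un,V)$ and an isometric decomposition $V\cong\bigoplus_j U_{t_j}$ with isometries $v_j$, so $\pi(a)=\sum_j\overline{v_j^{*}\eta}\otimes v_j^{*}T$. With $\Xi_j=\id_U\otimes v_j^{*}\otimes\id_{\bar U}$, the formula \eqref{eq:from-tensor-to-dual-action} gives $(\bar\xi\otimes\zeta)\trhd(\overline{v_j^{*}\eta}\otimes v_j^{*}T)=\overline{\Xi_j\nu_0}\otimes\Xi_j\Phi_0$, where $\nu_0$ and $\Phi_0$ are the vector and the morphism produced by applying $\trhd$ to $a$ itself; dinaturality turns $\pi$ of this into $\pi(\bar\nu_0\otimes\Xi_j^{*}\Xi_j\Phi_0)$, and summing over $j$ gives $\pi(\bar\nu_0\otimes\Phi_0)=\pi\bigl((\bar\xi\otimes\zeta)\trhd a\bigr)$ because $\sum_j\Xi_j^{*}\Xi_j=\id$. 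The factorization through $\pi_G\otimes\id$ is entirely parallel: decomposing $U\cong\bigoplus_i U_{s_i}$ by isometries $w_i$, one uses that $w_i$ commutes with $\rho$, that $\overline{w_i^{*}\mu}=\bar w_i^{*}\bar\mu$ for the conjugate isometries $\bar w_i$, and the resolution $\bar R_U=\sum_i(w_i\otimes\bar w_i)\bar R_{U_{s_i}}$, so that again the summands of $\pi_G(x)\trhd a$ reassemble, via dinaturality, into $x\trhd a$ after $\pi$ is applied.

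Granting this identity, the $\C[G]$-module algebra axioms for $\rhd$ all reduce to statements about $\trhd$ modulo $\ker\pi$, because $\pi$ and $\pi_G$ are algebra homomorphisms restricting to the identity on $\B\subset\tilde\B$ and on $\C[G]\subset\widetilde{\C[G]}$. The unit axiom $1\rhd a=a$ is immediate, as $\trhd$ acts as the identity for the element $\bar1\otimes1\in\bar H_\un\otimes H_\un$ representing $1\in\C[G]$. The module axiom $(xy)\rhd a=x\rhd(y\rhd a)$ becomes $\pi\bigl((x\cdot y)\trhd a\bigr)=\pi\bigl(x\trhd(y\trhd a)\bigr)$, checked directly from \eqref{eq:from-tensor-to-dual-action} using $\bar R_{U\circt U'}=(\id_U\otimes\bar R_{U'}\otimes\id_{\bar U})\bar R_U$ and the multiplicativity of $\rho$ on tensor products; and the counit axiom $x\rhd1=\epsilon(x)1$ is a short computation with $R_U$, $\bar R_U$ and $\rho$. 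For the Leibniz rule $x\rhd(ab)=(x_{(1)}\rhd a)(x_{(2)}\rhd b)$ it suffices to take $x=u_{ij}$ a matrix coefficient; using $\Delta(u_{ij})=\sum_k u_{ik}\otimes u_{kj}$ together with the identity already proved and the multiplicativity of $\pi$, everything reduces to the universal assertion
\[
\pi\bigl((\bar\xi\otimes\zeta)\trhd(a\cdot b)\bigr)=\sum_k\pi\Bigl(\bigl((\bar\xi\otimes\xi_k)\trhd a\bigr)\cdot\bigl((\bar\xi_k\otimes\zeta)\trhd b\bigr)\Bigr)
\]
for an orthonormal basis $\{\xi_k\}$ of $H_U$. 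I would prove this by applying dinaturality with respect to the morphism $\id_U\otimes\id_V\otimes R_U^{*}\otimes\id_W\otimes\id_{\bar U}\colon U\circt V\circt\bar U\circt U\circt W\circt\bar U\to U\circt V\circt W\circt\bar U$: on morphisms it collapses the two nested copies of $\bar R_U$ occurring on the right-hand side into a single one, via $(\id_U\otimes R_U^{*}\otimes\id_{\bar U})(\bar R_U\otimes\id_{U\circt\bar U})\bar R_U=\bar R_U$ (a consequence of the conjugate equation), while on vectors the sum over $k$ is exactly the preimage of the vector coming from $a\cdot b$ because $\sum_k\overline{\rho^{-1/2}\xi_k}\otimes\xi_k=R_U(1)$.

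The one place that demands genuine care is the Leibniz rule: one must keep track of the canonical identifications of conjugate spaces and of the $\rho^{\pm1/2}$-twists, and verify that the conjugate equation collapses the morphism part precisely as claimed. Everything else is routine bookkeeping once the dinaturality of $\pi$ is in place.
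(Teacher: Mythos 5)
Your proof is correct and follows essentially the same route as the paper: prove the compatibility $\pi_G(x)\rhd\pi(a)=\pi(x\trhd a)$ first via decompositions into irreducibles, then verify the module and Leibniz axioms upstairs in $\tilde\B$ using $\bar R_U=\sum_i(w_i\otimes\bar w_i)\bar R_{U_{s_i}}$, the identity $\sum_k\overline{\rho^{-1/2}\xi_k}\otimes\xi_k=R_U(1)$ and the conjugate equations, your explicit ``dinaturality of $\pi$'' lemma being a clean packaging of the instance-by-instance computations the paper performs by hand. The one imprecision is that $\bar R_{U\circt W}$ equals $(\iota\otimes\bar R_W\otimes\iota)\bar R_U$ only after composing with the canonical unitary $\bar H_W\otimes\bar H_U\to\overline{H_U\otimes H_W}$ --- a point the paper makes explicit via the map $\sigma$ and which your final paragraph correctly flags as the place requiring care.
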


\bp We start with the second statement.  We have to show that $\pi_G(x)\rhd\pi(a)=\pi(x\trhd a)$ for $x\in\widetilde{\C[G]}$ and $a\in\tilde\B$. Take $x=\bar\xi\otimes\zeta\in\bar H_U\otimes H_U$ and $a=\bar\eta\otimes T\in\bar H_V\otimes \CC(\un,V)$. Choose isometries $u_i\colon H_{s_i}\to H_U$ and $v_j\colon H_{s_j}\to H_V$ defining decompositions of $U$ and $V$ into irreducibles. Then
\begin{align}
\pi_G(x)\rhd\pi(a)&=\pi\left(\sum_{i,j}(\overline{u^*_i\xi}\otimes u^*_i\xi)\trhd(\overline{v^*_j\eta}\otimes v^*_jT)\right)\nonumber\\
&=\pi\left(\sum_{i,j}\overline{(u^*_i\xi\otimes v^*_j\eta\otimes\overline{\rho^{-1/2}u^*_i\zeta})}\otimes (\iota\otimes v^*_jT\otimes\iota)\bar R_{s_i}\right),\label{emodule1}
\end{align}
where $\bar R_{s_i}=\bar R_{U_{s_i}}$. On the other hand,
\begin{align}
\pi(x\trhd a)&=\pi\Big(\overline{(\xi\otimes\eta\otimes\overline{\rho^{-1/2}\zeta})}\otimes(\iota\otimes T\otimes\iota)\bar R_U\Big)\nonumber\\
&=\pi\left(\sum_{i,j,k}\overline{(u^*_i\xi\otimes v^*_j\eta\otimes\bar u^*_k\overline{\rho^{-1/2}\zeta})}\otimes (u^*_i\otimes v^*_jT\otimes\bar u^*_k)\bar R_{U}\right),\label{emodule2}
\end{align}
where the morphism $\bar u_k\colon H_{\bar U_{s_k}}=\bar H_{s_k}\to H_{\bar U}=\bar H_U$ is defined by $\bar u_k\bar\xi=\overline{u_k\xi}$. Since $u^*_k\pi_U(\rho)=\pi_{U_{s_k}}(\rho) u^*_k$, $\bar R_U=\sum_i (u_i\otimes\bar u_i)\bar R_{s_i}$
and the partial isometries $u_i$ have mutually orthogonal images, we see that expressions~\eqref{emodule1} and~\eqref{emodule2} are equal.

\smallskip
In order to show that $\rhd$ defines a left $\C[G]$-module structure, take $x=\bar\xi\otimes\zeta\in\bar H_U\otimes H_U$, $y=\bar\mu\otimes\nu\in\bar H_W\otimes H_W$ and $a=\bar\eta\otimes T\in\bar H_V\otimes \CC(\un,V)$. Then
$$
x\trhd(y\trhd a)=\overline{(\xi\otimes\mu\otimes\eta\otimes{\overline{\rho^{-1/2}\nu}}\otimes \overline{\rho^{-1/2}\zeta})}\otimes (\iota\otimes\iota\otimes T\otimes\iota\otimes\iota)(\iota\otimes \bar R_W\otimes\iota)\bar R_U
$$
is an element in $\bar H_{U\circt W\circt V\circt\bar W\circt \bar U}\otimes\CC(\un,U\circt W\circt V\circt\bar W\circt\bar U)$, and
$$
(x\cdot y)\trhd a =\overline{(\xi\otimes\mu\otimes\eta\otimes\overline{(\rho^{-1/2}\zeta\otimes \rho^{-1/2}\nu)})}\otimes (\iota\otimes T\otimes\iota)\bar R_{U\circt W}
$$
is an element in $\bar H_{(U\circt W)\circt V\circt \overline{(U\circt W)}}\otimes \CC(\un,(U\circt W)\circt V\circt\overline{(U\circt W)})$. The only reason why these two elements are different is that the representations $\bar W\circt\bar U$ and $\overline{U\circt W}$ are equivalent, but not equal. The map $\sigma\colon \bar H_W\otimes\bar H_U\to\overline{H_U\otimes H_W}$, $\sigma(\bar\mu\otimes\bar\xi)=\overline{\xi\otimes\mu}$ defines such an equivalence, and we have $\bar R_{U\circt W}=(\iota\otimes\iota\otimes\sigma)(\iota\otimes \bar R_W\otimes\iota)\bar R_U$. It follows that upon projecting to $\B$ we get an honest equality
$$
\pi(x\trhd(y\trhd a))=\pi((x\cdot y)\trhd a ),
$$
that is, $\pi_G(x)\rhd(\pi_G(y)\rhd\pi(a))=(\pi_G(x)\pi_G(y))\rhd\pi(a)$.

\smallskip

It remains to show that $\rhd$ respects the algebra structure on $\B$, that is, $x\rhd (ab)=(x_{(1)}\rhd a)(x_{(2)}\rhd b)$ for $x\in\C[G]$ and $a,b\in\B$.

Take elements $a=\bar\eta\otimes T\in \bar H_V\otimes \CC(\un,V)$ and $b=\bar\zeta\otimes S\in \bar H_W\otimes \CC(\un,W)$ in $\tilde\B$. Let $U$ be a finite dimensional unitary representation of $G$. Choose an orthonormal bases~$\{\xi_i\}_i$ in~$H_U$ and denote by~$u_{ij}$ the corresponding matrix coefficients of $U$. Since $u_{ij}=\pi_G(\bar\xi_i\otimes\xi_j)$ and $\Delta(u_{ij})=\sum_ku_{ik}\otimes u_{kj}$, we then have to show that
$$
\pi\big((\bar\xi_i\otimes\xi_j)\tilde\rhd(a\cdot b)\big)=\sum_k\pi\big(((\bar\xi_i\otimes\xi_k)\tilde\rhd a)\cdot ((\bar\xi_k\otimes\xi_j)\tilde\rhd b)\big).
$$
We have
\begin{equation}\label{emodulealg}
(\bar\xi_i\otimes\xi_j)\tilde\rhd(a\cdot b)=
\overline{(\xi_i\otimes\eta\otimes\zeta\otimes\overline{\rho^{-1/2}\xi_j})}\otimes(\iota\otimes T\otimes S\otimes\iota)\bar R_U.
\end{equation}
On the other hand,

\smallskip
$
\displaystyle \sum_k((\bar\xi_i\otimes\xi_k)\tilde\rhd a)\cdot ((\bar\xi_k\otimes\xi_j)\tilde\rhd b)
$
\begin{align*}
&=\sum_k\Big(\overline{(\xi_i\otimes\eta\otimes\overline{\rho^{-1/2}\xi_k})}\otimes(\iota\otimes T\otimes\iota)\bar R_U)\Big)\cdot \Big(
\overline{(\xi_k\otimes\zeta\otimes\overline{\rho^{-1/2}\xi_j})}\otimes(\iota\otimes S\otimes\iota)\bar R_U\Big)\\
&=\sum_k\overline{(\xi_i\otimes\eta\otimes\overline{\rho^{-1/2}\xi_k}\otimes \xi_k\otimes\zeta\otimes\overline{\rho^{-1/2}\xi_j})}\otimes(\iota\otimes T\otimes\iota\otimes\iota\otimes  S\otimes \iota)(\bar R_U\otimes\bar R_U).
\end{align*}
Since $\sum_k\overline{\rho^{-1/2}\xi_k}\otimes \xi_k=R_U(1)$, and $R_U$ is, up to a scalar factor, an isomeric embedding of $\un$ into~$\bar U\circt U$, by applying $\pi$ to the above expression we get
$$
\pi\big(\overline{(\xi_i\otimes\eta\otimes\zeta\otimes\overline{\rho^{-1/2}\xi_j})}\otimes(\iota\otimes T\otimes R_U^*\otimes  S\otimes \iota)(\bar R_U\otimes\bar R_U)\big).
$$
Since $(R^*_U\otimes\iota)(\iota\otimes \bar R_U)=\iota$, this is exactly the expression we obtain by applying $\pi$ to~\eqref{emodulealg}.
\ep

We next check compatibility~\eqref{estar} of $\rhd$ with the $*$-structure.

\begin{lemma}
For all $x\in\C[G]$ and $a\in\B$ we have $x\rhd a^*=(S(x)^*\rhd a)^*$.
\end{lemma}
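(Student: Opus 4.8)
The plan is to lift the identity to the universal objects $\widetilde{\C[G]}$ and $\tilde\B$ of Sections~\ref{sec:tann-krein-reconstr}--\ref{sec:from-tens-categ}, where $\rhd$ and the $*$-structures are given by the explicit formulas~\eqref{estarmod} and~\eqref{eq:from-tensor-to-dual-action}. First I would record the elementary identity $S(\pi_G(\bar\xi\otimes\zeta))^*=\pi_G(\bar\zeta\otimes\xi)$ for $\bar\xi\otimes\zeta\in\bar H_U\otimes H_U$, which is immediate from $\pi_G(\bar\xi\otimes\zeta)=((\cdot\,\zeta,\xi)\otimes\iota)(U)$ and the formula $S(u_{ij})=u_{ji}^*$ for matrix coefficients of a unitary representation; thus, under $\pi_G$, the antilinear map $x\mapsto S(x)^*$ on $\C[G]$ is induced by the antilinear map $\tilde\jmath\colon\widetilde{\C[G]}\to\widetilde{\C[G]}$ acting as $\bar\xi\otimes\zeta\mapsto\bar\zeta\otimes\xi$ on each summand. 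Combining this with the relation $\pi_G(x)\rhd\pi(a)=\pi(x\trhd a)$ from Lemma~\ref{lrhd}, with $\pi(a)^*=\pi(a^\bullet)$, and with the surjectivity of $\pi$ and $\pi_G$, the statement reduces to the identity
$$
\pi\big(\tilde x\trhd\tilde a^\bullet\big)=\pi\big((\tilde\jmath(\tilde x)\trhd\tilde a)^\bullet\big),
$$
and, since both sides are linear in $\tilde x$ and antilinear in $\tilde a$, it is enough to verify this for $\tilde x=\bar\xi\otimes\zeta\in\bar H_U\otimes H_U$ and $\tilde a=\bar\eta\otimes T\in\bar H_V\otimes\CC(\un,V)$.

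For such elementary tensors I would compute both sides using~\eqref{estarmod} and~\eqref{eq:from-tensor-to-dual-action}. The left-hand side comes out as $\pi$ of the element
$$
\overline{(\xi\otimes\overline{\rho^{-1/2}\eta}\otimes\overline{\rho^{-1/2}\zeta})}\otimes(\iota_U\otimes(T^*\otimes\iota)\bar R_V\otimes\iota_{\bar U})\bar R_U\in\bar H_{U\circt\bar V\circt\bar U}\otimes\CC(\un,U\circt\bar V\circt\bar U).
$$
For the right-hand side one first forms $(\bar\zeta\otimes\xi)\trhd(\bar\eta\otimes T)\in\bar H_{W_0}\otimes\CC(\un,W_0)$, with $W_0=U\circt V\circt\bar U$ and morphism part $Q=(\iota_U\otimes T\otimes\iota_{\bar U})\bar R_U$, and then applies $\bullet$; here the $\rho$-factors simplify, since $\pi_{\bar U}(\rho)=j(\rho)^{-1}$ forces $\pi_{\bar U}(\rho)^{-1/2}\,\overline{\rho^{-1/2}\xi}=\bar\xi$, so the right-hand side is $\pi$ of an element of $\bar H_{\overline{W_0}}\otimes\CC(\un,\overline{W_0})$ whose vector part arises by double conjugation from $\rho^{-1/2}\zeta\otimes\rho^{-1/2}\eta\otimes\bar\xi\in H_{W_0}$ and whose morphism part is $(Q^*\otimes\iota)\bar R_{W_0}$.

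Finally, I would match the two ambient representations by the canonical unitary $\sigma\colon\overline{W_0}\to U\circt\bar V\circt\bar U$, obtained from the conjugation isomorphism $\overline{U\circt V\circt\bar U}\cong\bar{\bar U}\circt\bar V\circt\bar U$ followed by the canonical isomorphism $u^{-1}\colon\bar{\bar U}\to U$, and check that $\sigma$ carries the vector part of the right-hand element onto that of the left-hand one and, using the expansion $\bar R_{W_0}=(\iota_{U\circt V}\otimes\bar R_{\bar U}\otimes\iota_{\bar V\circt\bar U})(\iota_U\otimes\bar R_V\otimes\iota_{\bar U})\bar R_U$ together with the relation $\bar R_{\bar U}=(\iota_{\bar U}\otimes u)R_U$ and the conjugate equations $(R_U^*\otimes\iota)(\iota\otimes\bar R_U)=\iota$, $(\bar R_U^*\otimes\iota)(\iota\otimes R_U)=\iota$, that $\sigma\circ(Q^*\otimes\iota)\bar R_{W_0}$ equals the morphism $(\iota_U\otimes(T^*\otimes\iota)\bar R_V\otimes\iota_{\bar U})\bar R_U$. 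Once this is done, the naturality of $\pi$ — namely $\pi(\bar\mu\otimes R)=\pi(\overline{\sigma'\mu}\otimes\sigma'R)$ for any unitary isomorphism $\sigma'$ of representations, the same mechanism used at the end of the proof of Lemma~\ref{lrhd} to identify $\pi(x\trhd(y\trhd a))$ with $\pi((x\cdot y)\trhd a)$ — yields the required equality after applying $\pi$. The main obstacle is precisely this last step: the conceptual content is only the zigzag identities, but one has to organize the conjugation isomorphisms and the nested $\bar R$'s carefully so that the $\bar R_U^*$ coming from $Q^*$ cancels the $R_U$ hidden in $\bar R_{\bar U}$ and the surviving terms reassemble into the morphism part of the left-hand element.
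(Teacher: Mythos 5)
Your proposal is correct and follows essentially the same route as the paper's own proof: the same reduction via $\pi$, $\pi_G$ and the map $x\mapsto S(x)^*$ lifted to $\bar\xi\otimes\zeta\mapsto\bar\zeta\otimes\xi$ on $\widetilde{\C[G]}$, the same explicit computation of $x\trhd a^\bullet$ and $(x^\dagger\trhd a)^\bullet$, and the same final matching of $U\circt\bar V\circt\bar U$ with $\overline{U\circt V\circt\bar U}$ using the factorization of $\bar R_{U\circt V\circt\bar U}$ and the conjugate equation $(\bar R_U^*\otimes\iota)(\iota\otimes R_U)=\iota$. The details you flag as the main obstacle are exactly the ones the paper carries out, and your organization of them is sound.
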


\bp Recall that the involution on $\B$ arises from the map $\bullet$ on $\tilde\B$ defined by~\eqref{estarmod}, so for $a=\bar\eta\otimes T\in\bar H_V\otimes\CC(\un,V)$ we have
$$
a^\bullet=\overline{\overline{\rho^{-1/2}\eta}}\otimes( T^*\otimes\iota)\bar R_V\in\bar H_{\bar V}\otimes\CC(\un,\bar V).
$$
Let us also define an antilinear map $\dagger$ on $\widetilde{\C[G]}$ by letting, for $x=\bar\xi\otimes\zeta\in\bar H_U\otimes H_U$,
$$
x^\dagger=\bar\zeta\otimes\xi.
$$
We then have $\pi_G(x^\dagger)=S(\pi_G(x))^*$. Indeed, using that $(\iota\otimes S)(U)=U^*$, we compute:
$$
S(\pi_G(x))^*=S(((\cdot\,\zeta,\xi)\otimes\iota)(U))^*
=((\cdot\,\zeta,\xi)\otimes\iota)(U^*))^*=((\cdot\,\xi,\zeta)\otimes\iota)(U)=\pi_G(x^\dagger).
$$

\smallskip

Turning now to the proof of the lemma, we have to show that
$$
\pi(x\trhd a^\bullet)=\pi((x^\dagger\trhd a)^\bullet).
$$
We compute:
\begin{align}
x\trhd a^\bullet&=(\bar\xi\otimes\zeta)\trhd\Big(\overline{\overline{\rho^{-1/2}\eta}}\otimes( T^*\otimes\iota)\bar R_V\Big)\nonumber\\
&=\overline{(\xi\otimes \overline{\rho^{-1/2}\eta}\otimes\overline{\rho^{-1/2}\zeta})}\otimes(\iota\otimes T^*\otimes\iota\otimes\iota)(\iota\otimes \bar R_V\otimes\iota)\bar R_U\label{estar1}
\end{align}
and
\begin{align}
(x^\dagger\trhd a)^\bullet &=((\bar\zeta\otimes\xi)\trhd a)^\bullet=
\Big(\overline{(\zeta\otimes\eta\otimes\overline{\rho^{-1/2}\xi})}\otimes (\iota\otimes T\otimes \iota)\bar R_U\Big)^\bullet\nonumber\\
&=\overline{\overline{(\rho^{-1/2}\zeta\otimes\rho^{-1/2}\eta\otimes\bar\xi)}}\otimes \big(\big((\iota\otimes T\otimes \iota)\bar R_U\big)^*\otimes\iota_{\overline{U \circt V\circt\bar U}}\big)\bar R_{U \circt V\circt \bar U},\label{estar2}
\end{align}
where we used that $\pi_{\bar U}(\rho)=j(\pi_U(\rho))^{-1}$, that is, $\rho\bar\xi=\overline{\rho^{-1}\xi}$. Similarly to the proof of the previous lemma, the main reason why expressions~\eqref{estar1} and~\eqref{estar2} are not equal is that the representations $U\circt \bar V\circt\bar U$ and $\overline{U \circt V\circt\bar U}$ are equivalent, but not equal. The map $\sigma(\xi\otimes\bar \eta\otimes\bar\zeta)=\overline{\zeta\otimes\eta\otimes\bar\xi}$ defines such an equivalence, and then
$$
\bar R_{U \circt V\circt \bar U}=(\iota\otimes\iota\otimes\iota\otimes\sigma)(\iota\otimes\iota \otimes R_U\otimes\iota\otimes\iota)(\iota\otimes \bar R_V\otimes \iota)\bar R_U.
$$
Since $(\bar R_U^*\otimes\iota)(\iota\otimes R_U)=\iota$, we get
$$
\big(\big((\iota\otimes T\otimes \iota)\bar R_U\big)^*\otimes\iota\big)\bar R_{U \circt V\circt \bar U}=\sigma(\iota\otimes T^*\otimes\iota\otimes\iota)(\iota\otimes \bar R_V\otimes\iota)\bar R_U.
$$
From this we see that upon applying $\pi$ expressions~\eqref{estar1} and~\eqref{estar2} indeed become equal.
\ep

Our next goal is to check the Yetter--Drinfeld condition~\eqref{eYD1}.

\begin{lemma} \label{lYD}
For all $x\in\C[G]$ and $a\in\B$ we have
$$
\alpha(x\rhd a) =x_{(1)} a_{(1)}S(x_{(3)})\otimes (x_{(2)}\rhd a_{(2)}).
$$
\end{lemma}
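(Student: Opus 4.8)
The plan is to verify the identity on a spanning set; since both sides are linear in $x\in\C[G]$ and in $a\in\B$, no separate argument for products is required. Concretely, I would take $x=u_{ij}$ to be a matrix coefficient of an arbitrary finite dimensional unitary representation $U$ with respect to an orthonormal basis $\{\xi_i\}_i$ of $H_U$ diagonalizing $\pi_U(\rho)$, say $\pi_U(\rho)\xi_i=\rho_i\xi_i$, and $a=\pi(\bar\eta_p\otimes T)$ for $\bar\eta_p\otimes T\in\bar H_V\otimes\CC(\un,V)$ with $\{\eta_p\}_p$ an orthonormal basis of $H_V$; such $x$ and $a$ span $\C[G]$ and $\B$ respectively. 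Put $W=U\circt V\circt\bar U$ and $T'=(\iota\otimes T\otimes\iota)\bar R_U\in\CC(\un,W)$. Writing $x=\pi_G(\bar\xi_i\otimes\xi_j)$ and using Lemma~\ref{lrhd}, the definition~\eqref{eq:from-tensor-to-dual-action} of $\trhd$, and $\overline{\rho^{-1/2}\xi_j}=\rho_j^{-1/2}\bar\xi_j$, we get
$$
x\rhd a=\pi\big(\overline{(\xi_i\otimes\eta_p\otimes\overline{\rho^{-1/2}\xi_j})}\otimes T'\big)=\rho_j^{-1/2}\,\pi\big(\overline{(\xi_i\otimes\eta_p\otimes\bar\xi_j)}\otimes T'\big),
$$
where $\xi_i\otimes\eta_p\otimes\bar\xi_j$ is an element of the orthonormal basis $\{\xi_a\otimes\eta_b\otimes\bar\xi_c\}$ of $H_W=H_U\otimes H_V\otimes\bar H_U$.

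For the left-hand side I would apply the formula~\eqref{eaction} for $\alpha$ to the representation $W$: its matrix coefficients in the above basis factor as $u_{ia}\,v_{pb}\,(\bar U)_{jc}$, and a short computation from the definition of the conjugate representation gives $(\bar U)_{jc}=\rho_j^{1/2}\rho_c^{-1/2}u_{jc}^{*}=\rho_j^{1/2}\rho_c^{-1/2}S(u_{cj})$; substituting this and converting $\bar\xi_c$ back to $\rho_c^{1/2}\,\overline{\rho^{-1/2}\xi_c}$ makes all $\rho$-weights cancel, yielding
$$
\alpha(x\rhd a)=\sum_{a,b,c}u_{ia}\,v_{pb}\,S(u_{cj})\otimes\pi\big(\overline{(\xi_a\otimes\eta_b\otimes\overline{\rho^{-1/2}\xi_c})}\otimes T'\big).
$$
For the right-hand side I would use $(\Delta\otimes\iota)\Delta(u_{ij})=\sum_{k,l}u_{ik}\otimes u_{kl}\otimes u_{lj}$, $\alpha(a)=\sum_q v_{pq}\otimes\pi(\bar\eta_q\otimes T)$, and $u_{kl}\rhd\pi(\bar\eta_q\otimes T)=\pi\big(\overline{(\xi_k\otimes\eta_q\otimes\overline{\rho^{-1/2}\xi_l})}\otimes T'\big)$ (again Lemma~\ref{lrhd} and~\eqref{eq:from-tensor-to-dual-action}), so that
$$
x_{(1)}a_{(1)}S(x_{(3)})\otimes(x_{(2)}\rhd a_{(2)})=\sum_{k,l,q}u_{ik}\,v_{pq}\,S(u_{lj})\otimes\pi\big(\overline{(\xi_k\otimes\eta_q\otimes\overline{\rho^{-1/2}\xi_l})}\otimes T'\big),
$$
which after relabelling $(k,q,l)\mapsto(a,b,c)$ is precisely the expression just found for $\alpha(x\rhd a)$.

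I expect the main obstacle to be the bookkeeping of the $\rho$-twists. The definition~\eqref{eq:from-tensor-to-dual-action} of $\trhd$ and the formula $\bar R_U(1)=\sum_i\rho^{1/2}\xi_i\otimes\bar\xi_i$ naturally produce the twisted vectors $\overline{\rho^{-1/2}\xi}$, whereas~\eqref{eaction} is applied most cleanly with the plain orthonormal basis $\{\bar\xi_c\}$ of $\bar H_U$; one must carry the factors $\rho_c^{\pm1/2}$ through the matrix coefficients of $\bar U$ and check that they cancel, leaving exactly the antipode $S(u_{cj})$ that matches the factor $S(x_{(3)})$ in the Yetter--Drinfeld identity. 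Apart from this, the computation is of the same routine Sweedler/diagrammatic flavour as the proofs of Lemma~\ref{lrhd} and of the preceding lemma on compatibility of $\rhd$ with the $*$-structure.
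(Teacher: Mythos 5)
Your proposal is correct and follows essentially the same route as the paper's proof: check the identity on spanning elements $x=u_{ij}$, $a=\pi(\bar\eta_p\otimes T)$ with a $\rho$-eigenbasis, compute $\alpha(x\rhd a)$ by applying~\eqref{eaction} to the representation $U\circt V\circt\bar U$, and match the two sides via the matrix-coefficient formula $\bar u_{jc}=\rho_j^{1/2}\rho_c^{-1/2}S(u_{cj})$ for the conjugate representation. The only (cosmetic) difference is that you normalize both sides to carry the twisted vectors $\overline{\rho^{-1/2}\xi_c}$, whereas the paper compares coefficients directly through the identity $\rho_j^{-1/2}S(u_{jj_0})=\rho_{j_0}^{-1/2}\bar u_{j_0j}$; these are the same cancellation.
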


\bp Let $U$ and $V$ be finite dimensional unitary representations of $G$. Choose orthonormal bases~$\{\xi_i\}_i$ in~$H_U$ and $\{\eta_k\}_k$ in $H_V$, and let $u_{ij}$ and $v_{kl}$ be the matrix coefficients of $U$ and $V$, respectively. In order to simplify the computations assume that the vectors $\xi_i$ are eigenvectors of $\rho$, so $\rho\xi_i=\rho_i\xi_i$ for some positive number $\rho_i$.  Then the matrix coefficients of $\bar U$ in the basis $\{\bar\xi_i\}_i$ are given by
\begin{equation}\label{econjun}
\bar u_{ij}=\rho_i^{1/2}\rho_j^{-1/2}u_{ij}^*=\rho_i^{1/2}\rho_j^{-1/2}S(u_{ji}).
\end{equation}

Consider elements $x=u_{i_0j_0}\in\C[G]$ and $a=\pi(\eta_{k_0}\otimes T)\in\B$ for some $T\in\CC(\un,V)$. Recalling definition~\eqref{eaction} of the action~$\alpha$, we have
$$
\alpha(a)=\sum_k v_{k_0k}\otimes\pi(\bar\eta_k\otimes T).
$$
It follows that
\begin{multline}
x_{(1)} a_{(1)}S(x_{(3)})\otimes (x_{(2)}\rhd a_{(2)})
=\sum_{i,j,k}u_{i_0i}v_{k_0k}S(u_{jj_0})\otimes( u_{ij}\rhd\pi(\bar\eta_k\otimes T))\\
=\sum_{i,j,k}\rho^{-1/2}_ju_{i_0i}v_{k_0k}S(u_{jj_0})\otimes \pi\Big(\overline{(\xi_i\otimes\eta_k\otimes\bar\xi_j)} \otimes(\iota\otimes T\otimes \iota)\bar{R}_U\Big).\label{eYD3}
\end{multline}
On the other hand,
\begin{align}
\alpha(x\rhd a)&=\rho_{j_0}^{-1/2}\alpha\big(\pi\big(\overline{(\xi_{i_0}\otimes \eta_{k_0}\otimes\bar\xi_{j_0})}\otimes (\iota\otimes T\otimes\iota)\bar{R}_U\big)\big)\nonumber\\
&=\rho_{j_0}^{-1/2}\sum_{i,j,k}u_{i_0i}v_{k_0k}\bar u_{j_0j}\otimes\pi\big(\overline{(\xi_{i}\otimes \eta_{k}\otimes\bar\xi_{j})}\otimes (\iota\otimes T\otimes\iota)\bar R_U\big).\label{eYD4}
\end{align}
Since $\rho^{-1/2}_jS(u_{jj_0})=\rho_{j_0}^{-1/2}\bar u_{j_0j}$, we see that expressions~\eqref{eYD3} and~\eqref{eYD4} are equal.
\ep

It remains to check the braided commutativity condition~\eqref{eBC}.

\begin{lemma}
For all and $a,b\in\B$ we have $ab=b_{(2)}(S^{-1}(b_{(1)})\rhd a)$.
\end{lemma}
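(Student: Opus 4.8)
I would verify \eqref{eBC} on a spanning set. Both sides are linear in $a$ (through the $\C[G]$-module map $\rhd$ and the multiplication of $\B$) and in $b$ (through $\alpha$ and the multiplication), so it suffices to take $a=\pi(\bar\eta\otimes T)$ with $\bar\eta\otimes T\in\bar H_V\otimes\CC(\un,V)$ and $b=\pi(\bar\zeta_{k_0}\otimes S)$ with $\bar\zeta_{k_0}\otimes S\in\bar H_W\otimes\CC(\un,W)$, where $\{\zeta_k\}_k$ is an orthonormal basis of $H_W$ consisting of eigenvectors of $\rho$, say $\rho\zeta_k=\rho_k\zeta_k$. The left-hand side is read off the product on $\tilde\B$:
$$
ab=\pi\big(\overline{(\eta\otimes\zeta_{k_0})}\otimes(T\otimes\iota)S\big).
$$
For the right-hand side, \eqref{eaction} gives $b_{(1)}\otimes b_{(2)}=\sum_kw_{k_0k}\otimes\pi(\bar\zeta_k\otimes S)$. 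I would then rewrite $S^{-1}(w_{k_0k})$ as a matrix coefficient of $\bar W$: using \eqref{econjun} (applied to $W$, and inside to $\bar W$) one gets $S^{-1}(w_{k_0k})=\rho_k^{1/2}\rho_{k_0}^{-1/2}\,\bar w_{kk_0}=\rho_k^{1/2}\rho_{k_0}^{-1/2}\,\pi_G(\overline{\bar\zeta_k}\otimes\bar\zeta_{k_0})$, where $\{\bar\zeta_k\}_k$ is the orthonormal basis of $\bar H_W=H_{\bar W}$.

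Next, Lemma~\ref{lrhd} (the identity $\pi_G(x)\rhd\pi(a)=\pi(x\trhd a)$) together with the definition \eqref{eq:from-tensor-to-dual-action} of $\trhd$ with $U$ replaced by $\bar W$, and using $\rho\bar\zeta_{k_0}=\rho_{k_0}^{-1}\bar\zeta_{k_0}$, makes the $\rho_{k_0}$-factors cancel and yields
$$
S^{-1}(w_{k_0k})\rhd a=\rho_k^{1/2}\,\pi\big(\overline{(\bar\zeta_k\otimes\eta\otimes\overline{\bar\zeta_{k_0}})}\otimes(\iota\otimes T\otimes\iota)\bar R_{\bar W}\big),
$$
an element built over the representation $\bar W\circt V\circt\bar{\bar W}$. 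Multiplying on the left by $b_{(2)}=\pi(\bar\zeta_k\otimes S)$, summing over $k$, and using $\pi(x)\pi(y)=\pi(x\cdot y)$ and the product on $\tilde\B$, I obtain
$$
b_{(2)}\big(S^{-1}(b_{(1)})\rhd a\big)=\sum_k\rho_k^{1/2}\,\pi\big(\overline{(\zeta_k\otimes\bar\zeta_k\otimes\eta\otimes\overline{\bar\zeta_{k_0}})}\otimes(S\otimes\iota)(\iota\otimes T\otimes\iota)\bar R_{\bar W}\big).
$$

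It then remains to identify this with $ab$. First, $\sum_k\rho_k^{1/2}\zeta_k\otimes\bar\zeta_k=\bar R_W(1)$. Second, under the canonical unitary equivariant identification of $\bar{\bar W}$ with $W$, the vector $\overline{\bar\zeta_{k_0}}$ becomes $\zeta_{k_0}$ and $\bar R_{\bar W}\colon\un\to\bar W\circt\bar{\bar W}$ becomes $R_W\colon\un\to\bar W\circt W$; since $\pi$ is unaffected by replacing a representation by a unitarily equivalent one — exactly as in the proof of the algebra-structure part of Lemma~\ref{lrhd} — the expression above equals
$$
\pi\big(\overline{(\bar R_W(1)\otimes\eta\otimes\zeta_{k_0})}\otimes(S\otimes\iota)(\iota\otimes T\otimes\iota)R_W\big).
$$
Finally, $R_W$ (like $\bar R_W$) is up to a scalar an isometric embedding of $\un$, so applying $\pi$ to this expression contracts the copy of $\bar R_W(1)$ in the vector against the matching block of the morphism coming from $R_W$; by the conjugate equations $(R_W^*\otimes\iota)(\iota\otimes\bar R_W)=\iota$ only the factor $\pi\big(\overline{(\eta\otimes\zeta_{k_0})}\otimes(T\otimes\iota)S\big)=ab$ survives, as desired.

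The computations leading up to the boxed-in big expression are routine, following the template of the proofs of \eqref{estar}, \eqref{eYD1} and of the algebra-structure statement in Lemma~\ref{lrhd}. The one delicate point — and the only real obstacle — is the final contraction: keeping track of the $\rho$-twists, of the several copies of $W$, $\bar W$ and $\bar{\bar W}$ together with their canonical identifications, and of exactly which tensor slots of the morphism get paired against which slots of the vector when $\pi$ is applied. It is the place where a stray scalar or a transposed factor is easiest to introduce, so after completing it one should check that for $b\in\B^G$ the whole computation collapses to $ab=ba$.
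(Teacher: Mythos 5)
Your proof is correct and follows essentially the same route as the paper's own: expand $b_{(1)}\otimes b_{(2)}$ via \eqref{eaction}, rewrite $S^{-1}$ of the matrix coefficients using \eqref{econjun}, apply $\trhd$ over $\bar W$ so the $\rho$-factors reassemble into $\bar R_W(1)$, identify $\bar{\bar W}$ with $W$, and contract the isometric copy of $\un$. The one small slip is at the final contraction: the conjugate equation actually invoked there is $(\bar R_W^*\otimes\iota)(\iota\otimes R_W)=\iota_W$, not $(R_W^*\otimes\iota)(\iota\otimes\bar R_W)=\iota_{\bar W}$.
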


\bp Let $U$, $V$, $\{\xi_i\}_i$, $u_{ij}$, $\bar u_{ij}$ be as in the proof of the previous lemma. Note that by swapping the roles of $U$ and $\bar U$ in~\eqref{econjun} we get
$$
S^{-1}(u_{ij})=\rho^{-1/2}_i\rho^{1/2}_j\bar u_{ji}.
$$
(Recall again that $\rho\bar\xi_i=\overline{\rho^{-1}\xi_i}$.) Using this, take $P\in\CC(\un,U)$, $\bar\eta\otimes T\in\bar H_V\otimes\CC(\un,V)$ and for $a=\pi(\bar\eta\otimes T)$ and $b=\pi(\bar\xi_i\otimes P)$ compute:
\begin{align*}
b_{(2)}(S^{-1}(b_{(1)})\rhd a)&=\sum_j\pi(\bar\xi_j\otimes P)(S^{-1}(u_{ij})\rhd\pi(\bar\eta\otimes T))\\
&=\sum_j\rho^{-1/2}_i\rho^{1/2}_j\pi(\bar\xi_j\otimes P)\pi((\bar{\bar\xi}_j\otimes\bar\xi_i)\trhd(\bar\eta\otimes T))\\
&=\sum_j\rho^{-1/2}_i\rho^{1/2}_j\pi(\bar\xi_j\otimes P)\pi\Big(\overline{(\bar\xi_j\otimes\eta\otimes \overline{\rho^{-1/2}\bar\xi_i})}\otimes(\iota\otimes T\otimes\iota)\bar R_{\bar U}\Big)\\
&=\sum_j\rho^{1/2}_j\pi\Big(\overline{(\xi_j\otimes \bar\xi_j\otimes\eta\otimes\bar{\bar\xi}_i)}\otimes(P\otimes(\iota\otimes T\otimes\iota)\bar R_{\bar U})\Big).
\end{align*}
Denote by $w$ the map $\xi\mapsto\bar{\bar\xi}$ defining an equivalence between $U$ and $\bar{\bar U}$. Then $\bar R_{\bar U}=(\iota\otimes w)R_U$. Hence the above expression equals
\begin{multline*}
\sum_j\rho^{1/2}_j\pi\Big(\overline{(\xi_j\otimes \bar\xi_j\otimes\eta\otimes\xi_i)}\otimes(P\otimes(\iota\otimes T\otimes\iota)R_{U})\Big)\\
=\pi\Big(\overline{(\bar R_U(1)\otimes\eta\otimes\xi_i)}\otimes (\iota\otimes\iota\otimes T\otimes\iota) (\iota\otimes R_{U})P\Big).
\end{multline*}
Since $\bar R_U$ is, up to a scalar factor, an isometric embedding of $\un$ into $U\circt\bar U$, the last expression equals
$$
\pi\Big(\overline{(\eta\otimes\xi_i)}\otimes (\bar R^*_U\otimes T\otimes\iota) (\iota\otimes R_{U})P\Big)=\pi(\overline{(\eta\otimes\xi_i)}\otimes(T\otimes P)).
$$
But this is exactly $ab$.
\ep

This finishes the proof of Theorem~\ref{tfromcat}.

\subsection{Functoriality}
\label{sec:functoriality}

Consider the category $\YD(G)$ of unital braided-commutative Yetter--Drinfeld $G$-C$^*$-algebras. For every object $B$ we have constructed isomorphic pairs $(\D_B,\E_B)$ and~$(\CC_B,\F_B)$. Using the extension of scalars functor discussed at the end of Section~\ref{sec:from-algebra-categ}, either of this constructions extends to a functor, giving us two naturally isomorphic functors $\TT\colon\YD(G)\to\Tens(\Rep G)$ and $\tilde\TT\colon\YD(G)\to\Tens(\Rep G)$. Namely, giving a morphism $f\colon B_0\to B_1$ we have a functor $f_\#\colon\D_{B_0}\to\D_{B_1}$ which maps a $G$-equivariant finitely generated right Hilbert $B_0$-module $X$ into $X\otimes_{B_0}B_1$. We define a tensor structure on this functor by using the isomorphisms
$$
(X\otimes_{B_0}B_1)\otimes_{B_1}(Y\otimes_{B_0}B_1)\cong (X\otimes_{B_0}Y)\otimes_{B_0}B_1
$$
such that $(x\otimes a)\otimes (y\otimes b)\mapsto x\otimes y_{(2)}\otimes (S^{-1}(y_{(1)})\rhd a)b$. That these maps are indeed well-defined and that they give us a tensor structure on $f_\#$, is not difficult to check using arguments similar to those in the proof of Lemma~\ref{liso}. The tensor functor $f_\#$ together with the obvious isomorphisms $\eta_U\colon (H_U\otimes B_0)\otimes_{B_0}B_1\to H_U\otimes B_1$ define a morphism $(\D_{B_0},\E_{B_0})\to(\D_{B_1},\E_{B_1})$.

If we consider the map $B\mapsto (\CC_B,\F_B)$ instead of $B\mapsto(\D_B,\E_B)$, then the situation is even better: in this case the functor $f_\#\colon\CC_{B_0}\to\CC_{B_1}$ defined by a morphism $f\colon B_0\to B_1$ is a strict tensor functor, meaning that $f_\#(T\otimes S)=f_\#(T)\otimes f_\#(S)$ on morphisms. This follows immediately from equations~\eqref{etensor1} and~\eqref{etensor2} describing the tensor structure on the categories $\CC_B$.

\smallskip

Let us now construct a functor $\SSS$ in the opposite direction. It is possible to define this functor on the whole category $\Tens(\Rep G)$, but in order to simplify notation we will construct it only on the full subcategory $\Tenssi(\Rep G)$ consisting of pairs~$(\CC,\E)$ such that $\CC$ is a strict C$^*$-tensor category containing $\Rep G$, $\CC$ is generated by $\Rep G$, and that $\E$ is the embedding functor. Since the embedding functor $\Tenssi(\Rep G)\to\Tens(\Rep G)$ is an equivalence of categories, any functor $\Tenssi(\Rep G)\to \YD(G)$ extends to $\Tens(\Rep G)$, and this extension is unique up to a natural isomorphism.

Given two objects $(\CC_0,\E_0)$ and $(\CC_1,\E_1)$ in $\Tenssi(\Rep G)$, consider the corresponding Yetter--Drinfeld C$^*$-algebras~$B_0$ and~$B_1$, and take a morphism $[(\F,\eta)]\colon (\CC_0,\E_0)\to(\CC_1,\E_1)$. As we discussed after the formulation of Theorem~\ref{tcatch}, we may assume that the restriction of $\F$ to $\Rep G\subset\CC_0$ is the identity tensor functor and $\eta_U=\iota$. In this case it is obvious from the construction of the algebras $\B_i$ that the maps $\bar H_U\otimes\CC_0(\un,U)\ni \bar\xi\otimes T\mapsto\bar\xi\otimes\F(T)\in\bar H_U\otimes\CC_1(\un,U)$ define a unital $*$-homomorphism $\B_0\to\B_1$ that respects the $\C[G]$-comodule and $\C[G]$-module structures. It extends to a homomorphism $f$ of C$^*$-algebras by \cite{MR3121622}*{Proposition~4.5}. It is also clear by our definition of morphisms in the category of pairs $(\CC,\E)$ that $f$ depends only on the equivalence class of $(\F,\iota)$. We thus get a functor $\SSS\colon\Tenssi(\Rep G)\to\YD(G)$.

Furthermore, it is clear from the construction that the morphism $f\colon B_0\to B_1$ defined by a morphism $[(\F,\iota)]\colon (\CC_0,\E_0)\to(\CC_1,\E_1)$ is injective if and only if the maps $\CC_0(\un,U_s)\to\CC_1(\un,U_s)$, $T\mapsto\F(T)$, are injective for all $s$, and $f$ is surjective if and only if these maps are surjective. Using Frobenius reciprocity it is easy to see that the maps $\CC_0(\un,U_s)\to\CC_1(\un,U_s)$ are injective, resp. surjective, for all $s$ if and only if the maps $\CC_0(U,U_V)\to\CC_1(U,V)$ are injective, resp. surjective, for all objects $U$ and $V$ in $\Rep G\subset\CC_0,\CC_1$. Since the categories $\CC_i$ are generated by $\Rep G$, it follows that $f$ is injective if and only if $\F$ is faithful, and $f$ is surjective if and only if $\F$ is full.

It is also worth noting that since a morphism $T\in\CC(\un,U)$ is zero if and only if $T^*T=0$ in~$\End_\CC(\un)$, we have, given a morphism $[(\F,\eta)]\colon (\CC_0,\E_0)\to(\CC_1,\E_1)$, that $\F$ is faithful if and only if the homomorphism $\End_{\CC_0}(\un)\to\End_{\CC_1}(\un)$ is injective. On the C$^*$-algebra level this corresponds to the simple property that a morphism $B_0\to B_1$ of $G$-C$^*$-algebras for a reduced compact quantum group~$G$ is injective if and only if its restriction to the fixed point algebra $B^{G}_0$ is injective.

\subsection{Equivalence of categories}
\label{sec:end-proof-thm-tcatch}

To finish the proof of Theorem~\ref{tcatch} it remains to show that the functors $\TT\colon\YD(G)\to\Tens(\Rep G)$ or $\tilde\TT\colon\YD(G)\to\Tens(\Rep G)$, and $\SSS\colon\Tens(\Rep G)\to\YD(G)$ are inverse to each other up to an isomorphism.

\smallskip

Let us start with a strict C$^*$-tensor category $\CC$ containing $\Rep G$ and construct a braided-commu\-tative Yetter--Drinfeld C$^*$-algebra $B$ as described in Section~\ref{sec:from-tens-categ}. By Theorem~\ref{tactions} the  $(\Rep G)$-module C$^*$-categories $\CC$ and $\CC_B$ are equivalent. We will use the concrete form of this equivalence explained in Section~\ref{sec:tann-krein-reconstr}. Recall that $\CC_B$ is the idempotent completion of the category $\Rep G$ with morphisms $\CC_B(U,V)\subset B(H_U,H_V)\otimes\B$, and we have a unitary equivalence $\F\colon\CC\to\CC_B$ such that $\F(U)=U$ for $U\in\Rep G$, while the action of $\F$ on morphisms is given by \eqref{eequiv}, so
$$
\CC(U,V)\ni T\mapsto\sum_{i,j}\theta_{\zeta_j,\xi_i}\otimes \pi\big(\overline{(\zeta_j\otimes\overline{\rho^{-1/2}\xi_i})}\otimes (T\otimes\iota)\bar R_U\big),
$$
where $\{\xi_i\}_i$ and $\{\zeta_j\}_j$ are orthonormal bases in $H_U$ and $H_V$, respectively. We claim that $\F$ is a strict tensor functor on the full subcategory of $\CC$ consisting of objects $U\in\Rep G$. This tensor functor extends then to a unitary tensor functor on the whole category $\CC$. Thus, we have to show that $\F(S\otimes T)=\F(S)\otimes\F(T)$ on morphisms in $\CC$. Since $\F$ is an equivalence of right $(\Rep G)$-module categories, we already know that this is true for morphisms~$S$ in~$\CC$ and morphisms $T$ in $\Rep G$; this is also not difficult to check directly, since the formula for $\F(S)\otimes\iota$ does not involve the Yetter--Drinfeld structure, see~\eqref{etensor2}. Therefore it remains to check that $\F(\iota\otimes T)=\iota\otimes\F(T)$ for morphisms $T$ in $\CC$.

Take $T\in\CC(V,W)$. Let $\{\eta_k\}_k$ be an orthonormal basis in $H_W$. We then have
$$
\F(\iota_U\otimes T) =\sum_{i,j,k,l}\theta_{\xi_i\otimes\eta_k,\xi_j\otimes\zeta_l} \otimes\pi\Big(\overline{(\xi_i\otimes\eta_k\otimes \overline{(\rho^{-1/2}\xi_j\otimes\rho^{-1/2}\zeta_l)})}\otimes ((\iota\otimes T)\otimes\iota_{\overline{U\circt W}})\bar R_{U\circt W}\Big).
$$
Similarly to the proof of Lemma~\ref{lrhd}, using that $\overline{U\circt W}$ is equivalent to $\bar W \circt \bar U$ and that modulo this equivalence $\bar R_{U\circt W}$ coincides with $(\iota\otimes \bar R_W\otimes\iota)\bar R_U$, we see that the above expression equals
$$
\sum_{i,j,k,l}\theta_{\xi_i\otimes\eta_k,\xi_j\otimes\zeta_l} \otimes\pi\Big(\overline{(\xi_i\otimes\eta_k\otimes\overline{\rho^{-1/2}\zeta_l}\otimes \overline{\rho^{-1/2}\xi_j})}\otimes (\iota\otimes(T\otimes\iota)\bar R_W\otimes\iota)\bar R_U\Big).
$$
The operators $\theta_{\xi_i,\xi_j}$ are the matrix units $m_{ij}$ in $B(H_U)$. Recalling the definition of $\rhd$ we can therefore write the above expression as
$$
\sum_{i,j,k,l}m_{ij}\otimes\theta_{\eta_k,\zeta_l} \otimes \Big(u_{ij}\rhd\pi\Big(\overline{(\eta_k\otimes\overline{\rho^{-1/2}\zeta_l})}\otimes (T\otimes\iota)\bar R_W\Big)\Big),
$$
where $u_{ij}$ are the matrix units of $U$. According to~\eqref{etensor1} this is exactly the formula for $\iota_U\otimes\F(T)$.

\medskip

Conversely, consider a unital braided-commutative Yetter--Drinfeld C$^*$-algebra $B$ and the corresponding pair $(\CC_B,\F_B)$. Let $B_\CC$ be the Yetter--Drinfeld C$^*$-algebra constructed from this pair. By Theorem~\ref{tactions} we know that there exists an isomorphism $\lambda\colon B_\CC\to B$ intertwining the actions of $G$. So all we have to do is to check that $\lambda$ is also a $\C[G]$-module map. The isomorphism $\lambda$ is defined by
\begin{equation} \label{eiso}
\lambda(\pi(\bar\zeta\otimes T))=(\bar\zeta\otimes\iota)(T)
\end{equation}
for $\zeta\in H_V$ and $T\in\CC_B(\un,V)\subset B(\C,H_V)\otimes\B=H_V\otimes \B$, see the proof of \cite{neshveyev-mjm-categorification}*{Theorem~2.3}. As above, fix finite dimensional unitary representations $U$ and $V$ of $G$ and orthonormal bases~$\{\xi_i\}_i$ and~$\{\zeta_k\}_k$ in $H_U$ and $H_V$, and let $u_{ij}$ be the matrix coefficients of $U$. Take
$$
T=\sum_k\zeta_k\otimes b_k\in\CC_B(\un,V)\subset H_V\otimes\B.
$$
Then $\lambda(\pi(\bar\zeta_{k_0}\otimes T))=b_{k_0}$, and we want to check that
$$
\lambda(u_{i_0j_0}\rhd\pi(\bar\zeta_{k_0}\otimes T))=u_{i_0j_0}\rhd b_{k_0}.
$$
By definition we have
$$
u_{i_0j_0}\rhd\pi(\bar\zeta_{k_0}\otimes T)=\pi((\bar\xi_{i_0}\otimes\xi_{j_0})\trhd (\bar \zeta_{k_0}\otimes T))
=\pi\Big(\overline{(\xi_{i_0}\otimes\zeta_{k_0}\otimes\overline{\rho^{-1/2}\xi_{j_0}})}\otimes (\iota\otimes T\otimes\iota)\bar R_U\Big).
$$
In order to compute the image of this element under $\lambda$, we need an explicit formula for $(\iota\otimes T\otimes\iota)\bar R_U\in\CC_B(\un,U\otimes V\otimes\bar U)\subset H_{U \circt V \circt \bar U}\otimes\B$. By~\eqref{etensor1} and~\eqref{etensor2}, the element
$$
\iota_U\otimes T\otimes\iota_U \in\CC_B(U\circt \bar U,U\circt V\circt \bar U)\subset B(H_U)\otimes H_V\otimes B(\bar H_U)\otimes\B
$$
equals
$\sum_{i,j,k}m_{ij}\otimes\zeta_k\otimes 1\otimes( u_{ij}\rhd b_k).$ It follows that
$$
(\iota\otimes T\otimes\iota)\bar R_U=\sum_{i,j,k}(\xi_i\otimes\zeta_k\otimes\overline{\rho^{1/2}\xi_j})\otimes( u_{ij}\rhd b_k).
$$
Therefore
$$
u_{i_0j_0}\rhd\pi(\bar\zeta_{k_0}\otimes T)=\pi\left(\overline{(\xi_{i_0}\otimes\zeta_{k_0}\otimes\overline{\rho^{-1/2}\xi_{j_0}})}\otimes \left(\sum_{i,j,k}(\xi_i\otimes\zeta_k\otimes\overline{\rho^{1/2}\xi_j})\otimes( u_{ij}\rhd b_k)\right)\right).
$$
Applying $\lambda$ we get the required equality
$\lambda(u_{i_0j_0}\rhd\pi(\bar\zeta_{k_0}\otimes T))=u_{i_0j_0}\rhd b_{k_0}$. Since the algebra $\B$ is spanned by such elements $b_{k_0}$ for different $V$, it follows that $\lambda$ is a $\C[G]$-module map.  This completes the proof of Theorem~\ref{tcatch}.

\bigskip

\section{Coideals of quotient type and their generalizations}
\label{sec:examples}

In this section we illustrate Theorem~\ref{tcatch} by considering well-known examples of Yetter--Drinfeld algebras arising from quantum subgroups and Hopf--Galois extensions.

\subsection{Quotient type coideals}
\label{sec:quot-type-coid}

By a closed quantum subgroup of $G$ we mean a compact quantum group $H$ together with a surjective homomorphism $\pi\colon\C[G]\to\C[H]$ of Hopf $*$-algebras. This is consistent with the definition used in the theory of locally compact quantum groups, but is weaker than e.g.~the definition used in~\cite{MR2335776}. Assuming that both $G$ and $H$ are reduced, the homomorphism~$\pi$ does not always extend to a homomorphism $C(G)\to C(H)$. Nevertheless the algebra $C(G/H)$ of continuous functions on the quantum homogeneous space $G/H$ is always well-defined: it is the norm closure of
$$
\C[G/H]=\{x\in\C[G]\mid (\iota\otimes\pi)\Delta(x)=x\otimes1\}.
$$

The algebra $C(G/H)$ is a braided-commutative Yetter--Drinfeld $G$-C$^*$-algebra, with the left action of~$G$ defined by the restriction of $\Delta$ to $C(G/H)$, and the action of $\hat G$ defined by the restriction of the right adjoint action on $C(G)$ to $C(G/H)$. In other words, the $\C[G]$-module structure on $\C[G/H]$ is defined~by
$$
x\rhd a=x_{(1)}aS(x_{(2)}).
$$

It is known and is easy to see that the $G$-C$^*$-algebra $C(G/H)$ corresponds to the category $\Rep H$ with the distinguished object $\un$, viewed as a $(\Rep G)$-module category via the forgetful tensor functor $\Rep G\to\Rep H$. Namely, in the notation of Section~\ref{sec:from-tens-categ}, by identifying $\Hom_H(\C,H_U)$ with a subspace of $H_U$, we can view the algebra $\tilde\B$ corresponding to the functor $\Rep G\to\Rep H$ as a subalgebra of $\widetilde{\C[G]}=\bigoplus_U(\bar H_U\otimes H_U)$. Then the map $\pi_G\colon\widetilde{\C[G]}\to\C[G]$ induces a $G$-equivariant isomorphism $\B\cong\C[G/H]$.

We claim that the $\C[G]$-module structure on $\C[G/H]$ defined by the tensor functor $\Rep G\to\Rep H$ is exactly the adjoint action. In order to show this it is enough to consider the case of trivial~$H$, since it corresponds to the inclusion $\Rep G\hookrightarrow\Hilb_f$, while the general case corresponds to the intermediate inclusion $\Rep G\hookrightarrow\Rep H$. As in the proof of Lemma~\ref{lYD}, fix unitary representations~$U$ and~$V$ and orthonormal bases $\{\xi_i\}_i$ in $H_U$ and $\{\zeta_k\}_k$ in $H_V$ such that $\rho\xi_i=\rho_i\xi_i$. Denote matrix coefficients of $U$, $V$ and $\bar U$ by $u_{ij}$, $v_{kl}$, $\bar u_{ij}$. Recall that by~\eqref{econjun} we have $\bar u_{ij}=\rho_i^{1/2}\rho_j^{-1/2}S(u_{ji})$. Then
$$
(\bar\xi_i\otimes\xi_j)\trhd(\bar\zeta_k\otimes\zeta_l)
=\sum_m\overline{(\xi_i\otimes\zeta_k\otimes\rho_j^{-1/2}\bar\xi_j)}\otimes (\rho_m^{1/2}\xi_m\otimes\zeta_l\otimes\bar\xi_m)
$$
It follows that
$$
u_{ij}\rhd v_{kl}=\sum_m\rho_j^{-1/2}\rho_m^{1/2}u_{im}v_{kl}\bar u_{jm}=\sum_mu_{im}v_{kl}S(u_{mj}),
$$
which is exactly the formula for the adjoint action.

\smallskip

As a simple application of Theorem~\ref{tcatch} we now get the following result, which under slightly stronger assumptions has been already established in~\cite{MR2335776} and~\cite{MR2785890}.

\begin{theorem} \label{tTomSal}
Let $G$ be a reduced compact quantum group. Then any unital left $G$- and right $\hat G$-invariant C$^*$-subalgebra of $C(G)$ has the form $C(G/H)$ for a unique closed quantum subgroup~$H$ of~$G$.
\end{theorem}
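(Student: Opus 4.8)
The plan is to realize such a subalgebra as an object of $\YD(G)$, transport it to the categorical side via Theorem~\ref{tcatch}, recognize the resulting tensor category as a representation category by Woronowicz's Tannaka--Krein theorem, and then match everything back inside $C(G)$. First I would check that a unital left $G$- and right $\hat G$-invariant C$^*$-subalgebra $B\subset C(G)$ is a unital braided-commutative Yetter--Drinfeld $G$-C$^*$-algebra: the left translation $\Delta$ restricts to a continuous action of $G$ on $B$, the right adjoint action of $\hat G$ restricts as well, and, since the regular subalgebra $\B=B\cap\C[G]$ lies in $\C[G]$, the identities \eqref{estar}, \eqref{eYD1} and \eqref{eBC} are inherited from $C(G)$. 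Thus $B\in\YD(G)$ and the inclusion $\iota\colon B\hookrightarrow C(G)$ is a morphism in $\YD(G)$. By Theorem~\ref{tcatch}, $B$ corresponds to a pair $(\CC,\E)$ with $\CC$ generated by $\E(\Rep G)$, while $C(G)$ corresponds to $(\Hilb_f,\E_0)$ with $\E_0\colon\Rep G\to\Hilb_f$ the forgetful functor; this is the case of trivial $H$ in the discussion opening Section~\ref{sec:quot-type-coid} (see also Example~\ref{exfunalg}). Writing $[(\F,\eta)]\colon(\CC,\E)\to(\Hilb_f,\E_0)$ for the morphism induced by $\iota$, the last assertion of Theorem~\ref{tcatch} shows that $\F\colon\CC\to\Hilb_f$ is faithful, since $\iota$ is injective.

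Next I would verify that $\F$ is a fiber functor. Being unitary monoidal it sends $\un$ to $\C$; it is faithful; and $\CC$ is rigid, because every object is a subobject of some $\E(U)$, the functor $\E$ preserves conjugates up to its tensor structure, and direct summands of rigid objects are rigid. The key point is that $\End_\CC(\un)\cong B^{G}=B\cap C(G)^{G}=\C\un$, using that the left translation action of $G$ on itself is ergodic; hence the unit of $\CC$ is simple, so $\CC$ is semisimple with finite-dimensional morphism spaces. By Woronowicz's Tannaka--Krein duality theorem there is a compact quantum group $H$, which we may take reduced, and a unitary monoidal equivalence $\CC\simeq\Rep H$ carrying $\F$ to the forgetful functor of $\Rep H$. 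Composing, $\Rep G\xrightarrow{\;\E\;}\CC\simeq\Rep H$ becomes a unitary tensor functor intertwining the forgetful functors (via $\eta$), which is moreover dominant because $\CC$ is generated by $\E(\Rep G)$. By functoriality of Woronowicz reconstruction such a functor comes from a Hopf $*$-algebra homomorphism $\pi\colon\C[G]\to\C[H]$, and dominance forces $\pi$ to be surjective, so $H$ is a closed quantum subgroup of $G$ in the sense of this section.

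By the discussion opening Section~\ref{sec:quot-type-coid}, the pair $(\Rep H,\text{forgetful})\cong(\CC,\E)$ corresponds under Theorem~\ref{tcatch} to the Yetter--Drinfeld algebra $C(G/H)$, whence $B\cong C(G/H)$ as objects of $\YD(G)$. To promote this to an equality of subalgebras of $C(G)$, observe that under $\CC\simeq\Rep H$ the morphism $[(\F,\eta)]$ becomes the forgetful functor $\Rep H\to\Hilb_f$, which is precisely the morphism inducing the embedding $C(G/H)\hookrightarrow C(G)$; since the functor $\YD(G)\to\Tens(\Rep G)$ of Theorem~\ref{tcatch} is fully faithful on morphisms, the composite $B\xrightarrow{\;\cong\;}C(G/H)\hookrightarrow C(G)$ coincides with $\iota$, and as $\iota$ is the literal inclusion we conclude $B=C(G/H)$. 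Uniqueness of $H$ follows because $\C[G/H]=B\cap\C[G]$ recovers $\C[H]$ together with $\pi$ up to isomorphism by the standard theory of quotient Hopf $*$-algebras (equivalently, $B$ as a subalgebra of $C(G)$ determines the pair $(\CC,\E)$ and the canonical morphism $[(\F,\eta)]$, hence the reconstructed pair $(H,\text{forgetful functor})$).

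The main obstacle here is organizational rather than conceptual. On the one hand one must confirm that restricting the structure of $C(G)$ genuinely yields a \emph{continuous} braided-commutative Yetter--Drinfeld $G$-C$^*$-algebra (the Podle\'s density condition for $\Delta|_B$, and faithfulness of the canonical conditional expectation so that $B$ is reduced), and on the other hand one must transport the abstract isomorphism $B\cong C(G/H)$ to an honest equality of subalgebras of $C(G)$, which forces one to keep careful track of the inclusion morphism through the equivalence of Theorem~\ref{tcatch} and to use its full faithfulness on morphisms. The genuinely substantive input --- that $\CC$ carries a fiber functor and hence equals $\Rep H$ --- is immediate once one knows $\End_\CC(\un)=\C$, which is simply the ergodicity of left translation.
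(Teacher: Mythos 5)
Your argument is correct and follows essentially the same route as the paper's proof: transport $B$ to a pair $(\CC,\E)$ via Theorem~\ref{tcatch}, use ergodicity and rigidity to see that the induced faithful functor to $\Hilb_f$ is a fiber functor, apply Woronowicz's Tannaka--Krein theorem to obtain $\CC\simeq\Rep H$ with $H$ a closed quantum subgroup of $G$, and then use the identification $(\D_{C(G/H)},\E_{C(G/H)})\cong(\Rep H,\text{forgetful})$ together with the (full) faithfulness of the equivalence of Theorem~\ref{tcatch} to conclude $B=C(G/H)$. The only point where you diverge is the uniqueness step: the paper argues concretely that $\C[G/H]$ determines $\Hom_H(\un,U)\subset H_U$, hence $\Hom_H(V,U)\subset B(H_V,H_U)$ by Frobenius reciprocity, hence the kernel of $\C[G]\to\C[H]$, whereas you invoke the Takeuchi-type correspondence; your parenthetical categorical justification is sound but should be completed by exactly this observation that the reconstructed data pins down the subspaces $\End_H(H_U)\subset B(H_U)$ and therefore the kernel of the restriction map.
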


\begin{proof}
Let $B\subset C(G)$ be a unital left $G$- and right $\hat G$-invariant C$^*$-algebra. Consider the corresponding pair $(\D_B,\E_B)=\TT(B)\in\Tens(\Rep G)$. By the ergodicity of the $G$-action on $B$, the unit object in $\D_B$ is simple. Since~$\D_B$ is generated by the image of $\Rep G$ and the category $\Rep G$ is rigid, the C$^*$-tensor category~$\D_B$ is rigid as well. The inclusion $B\hookrightarrow C(G)$ defines a morphism
$$
(\D_B,\E_B)\to(\D_{C(G)},\E_{C(G)})\cong(\Hilb_f,\F),
$$
where $\F\colon\Rep G\to\Hilb_f$ is the forgetful fiber functor. This means that $\D_B$ has a unitary fiber functor $\E\colon\D_B\to\Hilb_f$ such that $\F=\E\E_B$. By Woronowicz's Tannaka--Krein duality theorem, the pair $(\D_B,\E)$ defines a compact quantum group $H$. Then the functor $\E_B$ defines a functor $\Rep G\to\Rep H$ such that the forgetful fiber functor $\F$ on $\Rep G$ factors through that on $\Rep H$. It follows that $H$ can be regarded as a quantum subgroup of $G$.

Since by the discussion preceding the theorem the factorization of the fiber functor $\F\colon\Rep G\to\Hilb_f$ through $\Rep G\to\Rep H$ corresponds to the inclusion $C(G/H)\hookrightarrow C(G)$, we have therefore shown that there exists a closed quantum subgroup $H\subset G$ and an isomorphism $(\D_B,\E_B)\cong (\D_{C(G/H)},\E_{C(G/H)})$ such that the morphism $(\D_B,\E_B)\to(\D_{C(G)},\E_{C(G)})$ under this isomorphism becomes the morphism $(\D_{C(G/H)},\E_{C(G/H)})\to(\D_{C(G)},\E_{C(G)})$ defined by the inclusion $C(G/H)\hookrightarrow C(G)$. Since $\TT$ is an equivalence of categories, this implies that $B=C(G/H)$.

\smallskip

It remains to prove the uniqueness. In other words, we want to show that $C(G/H)\subset C(G)$ determines the kernel of the restriction map $\C[G]\to\C[H]$. Since $\C[G/H]$ is spanned by the matrix coefficients $a_{\xi,\zeta}=((\cdot\,\zeta,\xi)\otimes\iota)(U)$ such that $\zeta$ is an $H$-invariant vector, we can recover $\Hom_H(\un,U)\subset H_U$ for any representation $U$ of $G$ from $C(G/H)$. Using the duality morphisms in $\Rep G$ we can then recover $\Hom_H(V,U)\subset B(H_V,H_U)$ for all $V$ and $U$. Finally, observe that a finite combination $\sum_ia_{\xi_i,\zeta_i}$ of matrix coefficients in $\C[G]$, with $\xi_i,\zeta_i\in H_U$, is in the kernel of  the restriction map $\C[G]\to\C[H]$ if and only if $\sum_i(\cdot\,\zeta_i,\xi_i)$ vanishes on the commutant of $\End_H(H_U)$ in $B(H_U)$.
\end{proof}

\subsection{Invariant subalgebras of linking algebras}
\label{sec:linking-algebra}

The considerations of the previous subsection can be generalized to the linking algebras defined by monoidal equivalences.  Let $\F$ be the forgetful functor $\Rep G\to \Hilb_f$, and $\F'\colon \Rep G \to \Hilb_f, U \mapsto H'_U$, be another unitary fiber functor.  We denote the compact quantum group corresponding to $\F'$ by $G'$.  Then it is not difficult to check that the linking algebra between~$G$ and~$G'$, introduced in the C$^*$-algebraic setting in~\cite{MR2202309} and in the purely algebraic setting earlier in~\cite{MR1408508}, is exactly the C$^*$-algebra $B(\F,\F')$ corresponding to the pair $(\Hilb_f,\F')$ by our construction. In addition to the left action of $G$ it carries also a commuting right action of~$G'$, which is easy to see using that the regular subalgebra of $B(\F,\F')$ is $\B(\F,\F')=\bigoplus_s(\bar H_s\otimes H_s')$.

The $G$-C$^*$-algebras $B$ of the form $B(\F,\F')$ can be abstractly characterized by saying that the regular subalgebra $\B\subset B$ is a Hopf--Galois extension of $\C$ over $\C[G]$, which is a well-studied notion in the algebraic approach to quantum groups, see~\cite{arXiv:1006.3014}. By definition, this means that the Galois map
$$
\Gamma\colon \B \otimes \B \to \C[G] \otimes \B, \quad x \otimes y \mapsto x_{(1)} \otimes x_{(2)} y,
$$
is bijective. Analogously to the case of $\C[G]$ (which is the linking algebra $\B(\F,\F)$), there is a standard structure of a braided-commutative Yetter--Drinfeld algebra over $G$ on $\B$.  Namely, the action of $\C[G]$ on~$\B$ is the so called Miyashita--Ulbrich action, defined by $$x \rhd a = \Gamma^{-1}(x\otimes1)_1 a \Gamma^{-1}(x\otimes1)_2.$$

We claim that this action is the same as the one induced by the pair $(\Hilb_f,\F')$ by our construction. In order to show this, replace $(\Hilb_f,\F')$ by an isomorphic pair consisting of a strict C$^*$-tensor category $\CC$ containing $\Rep G$ and the embedding functor $\Rep G\to\CC$, as explained in Section~\ref{stwocategories}. What is now special about $\CC$, is that the unit object is simple and the maps $\CC(\un,U)\otimes\CC(\un,V)\to\CC(\un,U\circt V)$ are bijective. As in the previous subsection, fix unitary representations $U$ and $V$ of $G$ and an orthonormal basis $\{\xi_i\}_i$ in $H_U$ such that $\rho\xi_i=\rho_i\xi_i$. We can find elements $T_l\in\CC(\un, U)$ and $S_l\in\CC(\un,\bar U)$ such that
$$
\bar R_U=\sum_lT_l\otimes S_l\ \ \text{in}\ \ \CC.
$$
Then, for any $P\in\CC(\un, V)$ and $\zeta\in H_V$, we have
$$
(\bar\xi_i\otimes\xi_j)\tilde\rhd(\bar\zeta\otimes P)=\rho_j^{-1/2}\overline{(\xi_i\otimes\zeta\otimes\bar\xi_j)}\otimes(\iota\otimes P\otimes\iota)\bar R_U=\rho_j^{-1/2}\sum_l(\bar\xi_i\otimes T_l)\cdot(\bar\zeta\otimes P)\cdot(\bar{\bar\xi}_j\otimes S_l).
$$
Therefore in order to prove the claim it suffices to check that
$$
\sum_l\Gamma(\pi(\bar\xi_i\otimes T_l)\otimes\pi(\bar{\bar\xi}_j\otimes S_l))=\rho_j^{1/2}u_{ij}\otimes1.
$$
But this is true by the following simple computation:
\begin{multline*}
\sum_l\Gamma(\pi(\bar\xi_i\otimes T_l)\otimes\pi(\bar{\bar\xi}_j\otimes S_l))=
\sum_{k,l}u_{ik}\otimes\pi(\bar\xi_k\otimes T_l)\pi(\bar{\bar\xi}_j\otimes S_l)\\
=\sum_ku_{ik}\otimes\pi(\overline{(\xi_k\otimes\bar\xi_j)}\otimes \bar R_U)
=\sum_k u_{ik}\otimes \overline{\bar R^*_U(\xi_k\otimes\bar \xi_j)}
=\rho_j^{1/2}u_{ij}\otimes1.
\end{multline*}

\smallskip

If $H'$ is a closed quantum subgroup of $G'$, then, similarly to the C$^*$-algebras $C(G/H)\subset C(G)$, we may define C$^*$-algebras $B(\F,\F')^{H'}\subset B(\F,\F')$. Then by a completely analogous argument to that in the proof of Theorem~\ref{tTomSal} we obtain the following result.

\begin{theorem}
Let $G$ be a reduced compact quantum group and $B=B(\F,\F')$ be the linking C$^*$-algebra defined by the forgetful fiber functor $\F\colon\Rep G\to\Hilb_f$ and a unitary fiber functor $\F'\colon\Rep G\to\Hilb_f$. Let $G'$ be the compact quantum group defined by $\F'$. Then any unital left $G$- and right $\hat G$-invariant C$^*$-subalgebra of $B(\F,\F')$ has the form $B(\F,\F')^{H'}$ for a unique closed quantum subgroup $H'\subset G'$.
\end{theorem}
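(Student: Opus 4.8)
The plan is to follow the proof of Theorem~\ref{tTomSal} almost verbatim, with $C(G)\cong B(\F,\F)$ and the forgetful functor $\F$ replaced by $B(\F,\F')$ and $\F'$, and with $G$, in its role as the ``target'' quantum group, replaced by $G'$. So let $B\subset B(\F,\F')$ be a unital subalgebra invariant under the left $G$-action and the right $\hat G$-action. Restricting the braided-commutative Yetter--Drinfeld structure of $B(\F,\F')$ makes $B$ an object of $\YD(G)$, and the inclusion $B\hookrightarrow B(\F,\F')$ a morphism in $\YD(G)$. Applying the functor $\TT$ and using that $\TT(B(\F,\F'))\cong(\Hilb_f,\F')$ by Theorem~\ref{tcatch} (this is how $B(\F,\F')$ was defined), we obtain a morphism $(\D_B,\E_B)\to(\Hilb_f,\F')$ in $\Tens(\Rep G)$, that is, a unitary tensor functor $\E\colon\D_B\to\Hilb_f$ together with a natural unitary monoidal isomorphism $\F'\cong\E\E_B$.

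Since the $G$-action on $B(\F,\F')$ is ergodic, by the remark following Theorem~\ref{tactions} we have $B^G=\C$, so the unit of $\D_B$ is simple; as $\D_B$ is generated by the image of $\E_B$ and $\Rep G$ is rigid, $\D_B$ is a rigid semisimple C$^*$-tensor category with simple unit. Woronowicz's Tannaka--Krein theorem applied to $\E$ then produces a compact quantum group $H'$ with a monoidal equivalence $\D_B\simeq\Rep H'$ under which $\E$ becomes the forgetful functor. Under this identification $\E_B$ becomes a unitary tensor functor $\Rep G\to\Rep H'$ whose composition with the forgetful functor of $\Rep H'$ is isomorphic to $\F'$. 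Recalling that $G'$ is the quantum group reconstructed from $\Rep G$ with the fiber functor $\F'$, so that $\Rep G$ is monoidally equivalent to $\Rep G'$ with $\F'$ corresponding to the forgetful functor of $\Rep G'$, we may regard $\E_B$ as a tensor functor $\Rep G'\to\Rep H'$ compatible with the forgetful functors; since $\Rep H'$ is generated by its image, this exhibits $H'$ as a closed quantum subgroup of $G'$ and identifies $\E_B$ with the restriction functor $\Rep G'\to\Rep H'$. Exactly as for $C(G/H)\subset C(G)$, this restriction functor, together with the obvious morphism to $(\Hilb_f,\F')$, corresponds to the inclusion $B(\F,\F')^{H'}\hookrightarrow B(\F,\F')$. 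Since $\TT$ is an equivalence of categories, it follows that $B=B(\F,\F')^{H'}$.

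For the uniqueness, note that by \eqref{eq:regular-subalg} the regular subalgebra of $B(\F,\F')^{H'}$ is $\bigoplus_s\bigl(\bar H_s\otimes\Hom_{H'}(\un,\E_B(U_s))\bigr)$, sitting inside $\bigoplus_s(\bar H_s\otimes H'_s)$ via the inclusions $\Hom_{H'}(\un,\E_B(U_s))\subset H'_s$. Thus from the subalgebra $B(\F,\F')^{H'}\subset B(\F,\F')$ one recovers the spaces $\Hom_{H'}(\un,\E_B(U_s))\subset H'_s$; applying the duality morphisms of $\Rep G'$ one recovers $\Hom_{H'}(W_1,W_2)$ for all representations $W_i$ of $G'$ of the form $\E_B(U)$, hence the commutants $\End_{H'}(H'_U)'\subset B(H'_U)$ for all such $U$, which in turn determine $\ker(\C[G']\to\C[H'])$ by the criterion that a combination $\sum_i a'_{\xi_i,\zeta_i}$ of matrix coefficients of $G'$, with $\xi_i,\zeta_i\in H'_U$, lies in this kernel if and only if $\sum_i(\cdot\,\zeta_i,\xi_i)$ vanishes on $\End_{H'}(H'_U)'$. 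Therefore $H'$ is uniquely determined.

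The main obstacle is the step asserting that the pair $(\Rep H',\,\Rep G'\to\Rep H')$ corresponds to $B(\F,\F')^{H'}$, i.e.\ that this intermediate categorical object is realized by taking $H'$-fixed points of the commuting right $G'$-action on the linking algebra; this requires setting up the right $G'$-action and its fixed-point subalgebras in exact parallel with the case of $C(G/H)\subset C(G)$. Once this is in place, the remaining arguments are routine bookkeeping with the monoidal equivalence $\Rep G\simeq\Rep G'$ and the description of morphisms in $\Tens(\Rep G)$.
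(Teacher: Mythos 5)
Your proposal is correct and follows essentially the same route as the paper, which simply declares the proof to be ``completely analogous'' to that of Theorem~\ref{tTomSal}; your fleshed-out version is exactly that analogy, with $(\Hilb_f,\F)$ replaced by $(\Hilb_f,\F')$ and the target quantum group $G$ replaced by $G'$. The one step you flag as an obstacle --- that the intermediate factorization through $\Rep H'$ corresponds to $B(\F,\F')^{H'}$ --- is resolved exactly as in the discussion preceding the theorem: the regular subalgebra of $B(\F,\F')$ is $\bigoplus_s(\bar H_s\otimes H'_s)$ with the right $G'$-action on the second legs, so the $H'$-fixed points have regular subalgebra $\bigoplus_s\bigl(\bar H_s\otimes\Hom_{H'}(\un,H'_s)\bigr)$, matching the categorical construction from $(\Rep H',\,\Rep G\to\Rep H')$ just as for $C(G/H)$ in Section~\ref{sec:quot-type-coid}.
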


Let us finally say a few words about the differences between our approach to reconstructing the tensor functor $\F'$ from $B(\F,\F')$ and that in~\cite{MR2202309}. Assume $B$ is a unital $G$-C$^*$-algebra such that $B^G=\C1$. We can define a weak unitary tensor functor $\E\colon\Rep G\to\Hilb_f$, called the spectral functor, by letting
$$
\E(U)=\D_B(B,B\times U)
\ \ \text{and}\ \
\E_{2;U,V}\colon\E(U)\otimes\E(V)\to\E(U\circt V), \ \ T\otimes S\mapsto (T\otimes\iota)S.
$$
The scalar product on $\E(U)$ is defined by $S^*T=(T,S)1$, which makes sense by the ergodicity assumption. In general the maps $\E_{2;U,V}$ are not unitary but only isometric. When they are unitary, so that $(\E,\E_{2})$ becomes a unitary tensor functor, then the ergodic action of $G$ on $B$ is said to be of full quantum multiplicity. In this case, if~$G$~is reduced, then $B\cong B(\F,\E)$ as $G$-C$^*$-algebras~\cite{MR2202309} (see also~\cite{neshveyev-mjm-categorification}, where a more general result is proved in the notation consistent with the present work). In particular, another way of formulating the Hopf--Galois condition is to say that the action of $G$ is of full quantum multiplicity.

The spectral functor is constructed in a simple way using only the action of $G$, while in order to construct a tensor functor in our approach we also have to use the Miyashita--Ulbrich action. The reason why the two constructions give isomorphic functors is basically the following observation. Given a unitary fiber functor $\F'\colon\Rep G\to\Hilb_f$, we can define a new unitary fiber functor $\E\colon\Rep G\to\Hilb_f$ by letting
$$
\E(U)=\Hom(\C,\F'(U))\ \ \text{and}\ \ \E_{2;U,V}\colon\E(U)\otimes \E(V)\to\E(U\circt V),\ \
T\otimes S\mapsto\F'_{2;U,V}(T\otimes\iota)S.
$$
But it is clear that under the identification of $\Hom(\C,H)$ with $H$, the tensor functor $\E$ becomes identical to~$\F'$.

\bigskip

\section{Noncommutative Poisson boundaries}
\label{sec:poisson-boundary}

In this section we show that Theorem~\ref{tcatch} provides a link between Izumi's theory of Poisson boundaries of discrete quantum groups~\cite{MR1916370} and categorical Poisson boundaries introduced in~\cite{NY-categorical-Poisson-boundary}. We start by giving a categorical description of discrete duals.

\subsection{Discrete dual}
\label{sec:discrete-dual}

Consider the algebra $\ell^\infty(\hat G)\subset\U(G)=\C[G]^*$ of bounded functions on $\hat G$. We have a left adjoint action $\alpha$ of $G$ on $$\ell^\infty(\hat G)\cong\ell^\infty\mhyph\bigoplus_s B(H_s)$$ defined by
\begin{equation}\label{eadjoint}
B(H_s)\ni T\mapsto (U_s)_{21}^*(1\otimes T)(U_s)_{21}.
\end{equation}
This action is continuous only in the von Neumann algebraic sense, so in order to stay within the class of $G$-C$^*$-algebras, instead of $\ell^\infty(\hat G)$ we should rather consider the norm closure $B(\hat G)$ of the regular subalgebra $\ell^\infty_{\alg}(\hat G)\subset\ell^\infty(\hat G)$. Then the right action $\Dhat$ of $\hat G$ on $\ell^\infty(\hat G)$ makes this algebra into a unital braided-commutative Yetter--Drinfeld C$^*$-algebra. In other words, the left $\C[G]$-module structure on~$\ell^\infty_{\alg}(\hat G)$ is defined by
\begin{equation}\label{ead}
x\rhd a=(\iota\otimes x)\Dhat(a).
\end{equation}
In the subsequent computations we will use the notation $\Dhat(a)=a^{(1)}\otimes a^{(2)}$. Literally this does not make sense, but the expressions like $a^{(1)}\otimes \pi_U(a^{(2)})$ are still meaningful, since $(\iota\otimes \pi_U)\Dhat(a)$ is an element of the algebraic tensor product $\ell^\infty(\hat G)\otimes B(H_U)$.

\smallskip

We want to describe the corresponding C$^*$-tensor category $\CC=\CC_{B(\hat G)}$ and the unitary tensor functor $\F=\F_{B(\hat G)}\colon\Rep G\to\CC$. By definition, the category $\CC$ is the idempotent completion of the category with the same objects as in $\Rep G$, but with the morphism sets
$\CC(U,V)\subset B(H_U,H_V)\otimes\ell^\infty_{\alg}(\hat G)$. In fact, for the reasons that will become apparent in a moment, it is more convenient to consider $\CC(U,V)$ as a subset of $\ell^\infty_\alg(\hat G)\otimes B(H_U,H_V)$.
Thus, we define $\CC(U,V)$ as the set of elements $T\in \ell^\infty_\alg(\hat G)\otimes B(H_U,H_V)$ such that
$$
V^*_{31}(\alpha\otimes\iota)(T)U_{31}=1\otimes T.
$$
From the definition of the adjoint action~$\alpha$ we see that an element $T\in\ell^\infty_\alg(\hat G)\otimes B(H_U,H_V)$ lies in $\CC(U,V)$ if and only if it defines a $G$-equivariant map $H_s\otimes H_U\to H_s\otimes H_V$ for all~$s$. It follows that $\CC(U,V)$ can be identified with the space $\Nat_b(\iota\otimes U,\iota\otimes V)$ of bounded natural transformations between the functors $\iota\otimes U$ and $\iota\otimes V$ on $\Rep G$.

Using this picture we get a natural tensor structure on $\CC$: the tensor product of objects is defined as in $\Rep G$, while the tensor product of natural transformations $\nu\colon \iota\otimes U\to\iota\otimes V$ and $\eta\colon\iota\otimes W\to\iota\otimes Z$ is defined by
$$
\nu\otimes\eta=(\nu\otimes\iota_Z)(\iota_U\otimes\eta) =(\iota_V\otimes\eta)(\nu\otimes\iota_W),
$$
where $\nu\otimes\iota_Z$ is defined by $(\nu\otimes\iota_Z)_X=\nu_X\otimes\iota_Z$, while $\iota_U\otimes \eta$ is defined by $(\iota_U\otimes\eta)_X=\eta_{X\circt U}$. Explicitly, if $\nu=\sum_ia_i\otimes T_i\in \ell^\infty_{\alg}(\hat G)\otimes B(H_U,H_V)$ and $\eta=\sum_jb_j\otimes S_j\in \ell^\infty_{\alg}(\hat G)\otimes B(H_W,H_Z)$, then
\begin{equation} \label{etensord}
\nu\otimes\eta=\sum_{i,j}a_ib_j^{(1)}\otimes (T_i\pi_U(b_j^{(2)})\otimes S_j)\in\ell^\infty_{\alg}(\hat G)\otimes B(H_U\otimes H_W,H_V\otimes H_Z).
\end{equation}

The functor $\F\colon\Rep G\to\CC$ is now the strict tensor functor such that $\F(U)=U$ on objects and $\F(T)=1\otimes T$ on morphisms.

It remains to show that the tensor structure on $\CC$ defines the same $\C[G]$-module structure on~$\ell^\infty_{\alg}(\hat G)$ as \eqref{ead}. Consider an element $\bar\zeta\otimes T\in\bar H_V\otimes\CC(\un,V)$. Identifying $B(\C,H_V)$ with $H_V$ we can write $T=\sum_ka_k\otimes \zeta_k$ for some $a_k\in\ell^\infty_{\alg}(\hat G)$ and $\zeta_k\in H_V$. Then, identifying the algebra~$\B$ constructed from the pair $(\CC,\F)$ with $\ell^\infty_\alg(\hat G)$, the element $a=\pi(\bar\zeta\otimes T)\in\B=\ell^\infty_{\alg}(\hat G)$ equals $\sum_k(\zeta_k,\zeta)a_k$, see equation~\eqref{eiso}. Choose a unitary representations $U$ and an orthonormal basis~$\{\xi_i\}_i$ in~$H_U$ consisting of eigenvectors of $\rho$, so $\rho\xi_i=\rho_i\xi_i$. By \eqref{etensord} the morphism
$$
(\iota\otimes T\otimes\iota)\bar R_U\in\CC(\un,U\circt V\circt \bar U)\subset\ell^\infty(\hat G)\otimes(H_U\otimes H_V\otimes \bar H_U)
$$
is represented by the element
$$
\sum_{k,l}a_k^{(1)} \otimes\big(\rho_l^{1/2}a_k^{(2)}\xi_l\otimes\zeta_k\otimes\bar\xi_l\big).
$$
Then by definition~\eqref{eq:from-tensor-to-dual-action} of the map~$\tilde\rhd$ we get
$$
(\bar\xi_i\otimes\xi_j)\trhd\left(\bar\zeta\otimes\left(\sum_ka_k\otimes\zeta_k\right)\right)
=\overline{(\xi_i\otimes\zeta\otimes\rho_j^{-1/2}\bar\xi_j)}\otimes\left(\sum_{k,l}a_k^{(1)} \otimes\big(\rho_l^{1/2}a_k^{(2)}\xi_l\otimes\zeta_k\otimes\bar\xi_l\big)\right),
$$
whence
$$
u_{ij}\rhd a=\sum_k(a_k^{(2)}\xi_j,\xi_i)(\zeta_k,\zeta)a_k^{(1)} =(a^{(2)}\xi_j,\xi_i)a^{(1)}.
$$
But this is exactly how the action \eqref{ead} is defined.

\subsection{Poisson boundaries}
\label{sec:categ-appr-poiss}

Let us briefly overview the theory of noncommutative Poisson boundaries developed by Izumi~\cite{MR1916370}.

For a finite dimensional unitary representation $U$ of $G$, consider the state $\phi_U$ on~$B(H_U)$ defined~by
\begin{equation} \label{ephi}
\phi_U(T) = \frac{\Tr(T \pi_U(\rho)^{-1})}{\dim_q U} \quad \text{for}\ \ T \in B(H).
\end{equation}
If $U$ is irreducible, it can be characterized as the unique state satisfying
$$
(\iota \otimes \phi_U)(U_{2 1}^* (1 \otimes T) U_{2 1}) = \phi_U(T).
$$
For our fixed representatives of irreducible representations $\{U_s\}_s$ of $G$, we write $\phi_s$ instead of $\phi_{U_s}$.

When $\phi$ is a normal state on $\ell^\infty(\hat{G})$, we define a completely positive map $P_\phi$ on $\ell^\infty(\hat{G})$ by
$$
P_{\phi}(a) = (\phi \otimes \iota) \Dhat(a).
$$
If $\mu$ is a probability measure on the set $\Irr(G)$ of isomorphism classes of irreducible representations of $G$, we define a normal unital completely positive map $P_\mu$ on $\ell^\infty(\hat G)$ by $P_\mu = \sum_s \mu(s) P_{\phi_s}$.  The space
$$
H^\infty(\hat{G}, \mu) = \{ x \in \ell^\infty(\hat{G}) \mid x = P_\mu(x) \}
$$
of $P_\mu$-harmonic elements is called the \emph{noncommutative Poisson boundary} of $\hat{G}$ with respect to $\mu$.  This is an operator subspace of $\ell^\infty(\hat{G})$ closed under the left adjoint action $\alpha$ of $G$ defined by \eqref{eadjoint} and the right action $\Dhat$ of $\hat G$ on itself by translations.  It has a new product structure
$$
x \cdot y = \lim_{n \to \infty} P_\mu^n(x y),
$$
where the limit is taken in the strong$^*$ operator topology.  With this product $H^\infty(\hat{G}, \mu)$ becomes a von Neumann algebra (with the original operator space structure), and the actions of~$G$ and~$\hat{G}$ on~$\ell^\infty(\hat G)$ define continuous, in the von Neumann algebraic sense, actions on $H^\infty(\hat G,\mu)$.

Consider the regular subalgebra $H^\infty_\alg(\hat{G}, \mu) = H^\infty(\hat{G}, \mu) \cap \ell^\infty_\alg(\hat{G})$ of $H^\infty(\hat{G}, \mu)$ and denote by $B(\hat G,\mu)$ its norm closure. In other words, in the notation of Section~\ref{sec:discrete-dual}, $B(\hat G,\mu)=B(\hat G)\cap H^\infty(\hat G,\mu)$. We will show in Theorem~\ref{thm:tensor-cat-compar-Pois-bdry} that the action of~$\hat G$ on $H^\infty(\hat{G}, \mu)$ restricts to a continuous action on~$B(\hat G,\mu)$ and that $B(\hat G,\mu)$ becomes a braided-commutative Yetter--Drinfeld $G$-C$^*$-algebra.

\medskip

Let us now recall the construction of the Poisson boundary of $(\Rep G,\mu)$ defined in~\cite{NY-categorical-Poisson-boundary}.

The image of $\Hom_G(U\circt V,U\circt W)$ under the map
$$
\phi_U\otimes\iota\colon B(H_U)\otimes B(H_V,H_W)\to B(H_V,H_W)
$$
is contained in $\Hom_G(V,W)$, and the maps
$$
\phi_U\otimes\iota\colon \Hom_G(U\circt V,U\circt W)\to \Hom_G(V,W)
$$
we thus get, are what we called the partial categorical traces on $\Rep G$ in~\cite{NY-categorical-Poisson-boundary}. They allow us to define an operator $P_U$ on the space of natural transformations $\Nat(\iota \otimes V, \iota \otimes W)$ by
$$
P_U(\eta)_X = (\phi_U \otimes \iota)(\eta_{U\otimes X}).
$$
It is easy to see that this operation preserves the subspace $\Nat_b(\iota \otimes V, \iota \otimes W)$ of bounded natural transformations. Given a probability measure $\mu$ on $\Irr(G)$, we define an operator $P_\mu$ acting on $\Nat_b(\iota \otimes V, \iota \otimes W)$ by $P_\mu=\sum_s \mu(s)P_{U_s}$.

A bounded natural transformation $\eta\colon\iota\otimes V\to\iota\otimes W$ is called $P_\mu$-{\em harmonic} if $P_\mu(\eta)=\eta$. Any morphism $T\colon V\to W$ defines a bounded natural transformation $(\iota_X\otimes T)_X$, which is obviously $P_\mu$-harmonic for every $\mu$.

The \emph{categorical Poisson boundary} $(\PP,\Pi)$ of $(\Rep G,\mu)$ consists of the C$^*$-tensor category~$\PP$ and the strict unitary tensor functor $\Pi\colon\CC\to\PP$ defined as follows. The category $\PP$ is the idempotent completion of $\Rep G$ with the new morphism sets
$$
\PP(U, V) = \{ \eta \in \Nat_b(\iota \otimes U, \iota \otimes V) \mid P_\mu(\eta) = \eta \},
$$
endowed with the composition law
$$
(\eta \cdot \nu)_X = \lim_{n \to \infty}P^n_\mu(\eta \nu)_X.
$$
On objects in $\Rep G$ the tensor product in $\PP$ is the same as in $\Rep G$, while on morphisms it is given~by
$$
\eta\otimes\nu=(\eta\otimes\iota)\cdot(\iota\otimes \nu) =(\iota\otimes \nu)\cdot(\eta\otimes\iota),
$$
where $\eta\otimes\iota$ and $\iota\otimes \nu$ are defined as in Section~\ref{sec:discrete-dual}. The functor $\Pi\colon\CC\to\PP$ is defined by letting $\Pi(U)=U$ on objects and $\Pi(T)=(\iota_X\otimes T)_X$ on morphisms. We usually omit $\Pi$ and consider $\Rep G$ as a subcategory of $\PP$.

\begin{theorem}
\label{thm:tensor-cat-compar-Pois-bdry}
Let $G$ be a compact quantum group and $\mu$ be a probability measure on $\Irr(G)$. Then the dense C$^*$-subalgebra $B(\hat G,\mu)\subset H^\infty(\hat G,\mu)$ is a unital braided-commutative Yetter--Drin\-feld $G$-C$^*$-algebra and the pair $(\D_{B(\hat G,\mu)},\E_{B(\hat G,\mu)})$, consisting of the C$^*$-tensor category~$\D_{B(\hat G,\mu)}$ of $G$-equivariant finitely generated Hilbert $B(\hat G,\mu)$-modules and the unitary tensor functor~$\E_{B(\hat G,\mu)}\colon\Rep G\to\D_{B(\hat G,\mu)}$, is isomorphic to the categorical Poisson boundary of $(\Rep G,\mu)$.
\end{theorem}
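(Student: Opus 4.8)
The plan is to deduce the theorem from Theorem~\ref{tcatch} applied to the categorical Poisson boundary. Recall from~\cite{NY-categorical-Poisson-boundary} that $\PP$ is a C$^*$-tensor category and that $\Pi\colon\Rep G\to\PP$ is a generating unitary tensor functor (it is generating since $\PP$ is by construction an idempotent completion of $\Rep G$); hence $(\PP,\Pi)\in\Tens(\Rep G)$. By Theorem~\ref{tcatch}, together with the explicit form of the correspondence given by Theorems~\ref{thm:brd-comm-YD-tensor-cat} and~\ref{tfromcat}, there is a unital braided-commutative Yetter--Drinfeld $G$-C$^*$-algebra $B'$, unique up to isomorphism, with $(\D_{B'},\E_{B'})\cong(\PP,\Pi)$. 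Everything then reduces to identifying $B'$ with $B(\hat G,\mu)$ as a Yetter--Drinfeld algebra: the functoriality of $B\mapsto(\D_B,\E_B)$ from Section~\ref{sec:functoriality} will give the asserted isomorphism of pairs, and the mere existence of such an isomorphism shows that $B(\hat G,\mu)$ carries a braided-commutative Yetter--Drinfeld $G$-C$^*$-algebra structure, with the $\hat G$-action being the restriction of $\Dhat$.

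For the identification I would use the concrete reconstruction of Section~\ref{sec:from-tens-categ}: the regular subalgebra of $B'$ is $\B'=\bigoplus_s(\bar H_s\otimes\PP(\un,U_s))$. Since $\PP(\un,U_s)$ is exactly the $P_\mu$-harmonic subspace of $\Nat_b(\iota,\iota\otimes U_s)=\CC_{B(\hat G)}(\un,U_s)$, and the regular subalgebra of $B(\hat G)$ was identified in Section~\ref{sec:discrete-dual} with $\ell^\infty_\alg(\hat G)$, the obvious inclusions $\PP(\un,U_s)\hookrightarrow\CC_{B(\hat G)}(\un,U_s)$ realize $\B'$ as a subspace of $\ell^\infty_\alg(\hat G)$, which I claim is $H^\infty_\alg(\hat G,\mu)$. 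The easy half is to check that the remaining structure maps match under this inclusion. The comodule action $\alpha$ is given by~\eqref{eaction}, which does not involve the composition law of $\PP$, so it is the restriction of the adjoint action~\eqref{eadjoint}; similarly the $*$-structure restricts via~\eqref{estarmod}. For the $\C[G]$-module action, recall that $x\rhd a=\pi(x\trhd a)$ with $\trhd$ as in~\eqref{eq:from-tensor-to-dual-action}: this uses only the partial tensor products $\iota\otimes T\otimes\iota$, which are defined by the same formulas in $\PP$ and in $\CC_{B(\hat G)}$ and preserve harmonicity, composed with a morphism $\bar R_U$ of $\Rep G$. Since composition with a morphism of $\Rep G$ commutes with $P_\mu$, harmonic composition agrees with ordinary composition here, so $\rhd$ on $\B'$ is the restriction of the $\hat G$-action~\eqref{ead} on $\ell^\infty(\hat G)$; in particular this action preserves $H^\infty_\alg(\hat G,\mu)$ and extends to a continuous action of $\hat G$ on $B(\hat G,\mu)$, as required.

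The main obstacle is matching the multiplications, which is where the harmonic composition of $\PP$ genuinely intervenes. The product on $\B'$ is $(\bar\xi\otimes T)\cdot(\bar\zeta\otimes S)=\overline{\xi\otimes\zeta}\otimes(T\otimes\iota)\cdot_\PP S$, where $(T\otimes\iota)\cdot_\PP S=\lim_n P_\mu^n\bigl((T\otimes\iota)S\bigr)$ in the strong$^*$ topology and $(T\otimes\iota)S$ is the ordinary composite; on the other side the product on $H^\infty_\alg(\hat G,\mu)$ is $a\cdot b=\lim_n P_\mu^n(ab)$ with $ab$ the ordinary product in $\ell^\infty_\alg(\hat G)$, which corresponds precisely to $\overline{\xi\otimes\zeta}\otimes(T\otimes\iota)S$. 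Thus matching the products comes down to the assertion that, under the identification of Section~\ref{sec:discrete-dual}, the operator $P_\mu$ on $\ell^\infty(\hat G)$ acts on the morphism leg by the categorical operator $P_\mu=\sum_s\mu(s)P_{U_s}$ on natural transformations, and trivially on the $\bar H_s$-leg; passing to the harmonic limits $\lim_n P_\mu^n$ then gives the result. I expect this compatibility to be the heart of the proof: it should follow from the fact that $P_\mu=\sum_s\mu(s)(\phi_s\otimes\iota)\Dhat$ is built from the same partial traces $\phi_s\otimes\iota$ on $B(H_s)$ that define the operators $P_{U_s}$ on $\Nat_b$, together with the compatibility of $\Dhat$ with the categorical action already verified in Section~\ref{sec:discrete-dual}; in particular $P_\mu$ is $G$-equivariant, because each $\phi_s$ is invariant under the adjoint action on $B(H_s)$, so it preserves the isotypic decomposition. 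The remaining care needed is to reconcile the algebraic picture of $\B'$ with the von Neumann algebraic aspects of $H^\infty(\hat G,\mu)$.

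Granting this, $\B'$ and $H^\infty_\alg(\hat G,\mu)$ are isomorphic as $*$-algebras equipped with compatible actions of $G$ and $\hat G$. Since the adjoint $G$-action is algebraic in both pictures and a compatible C$^*$-norm is unique by~\cite{MR3121622}*{Proposition~4.4}, the C$^*$-norm on $B'$ coincides with the von Neumann algebra norm restricted to $H^\infty_\alg(\hat G,\mu)$, so taking closures gives $B'=B(\hat G,\mu)$. Therefore $B(\hat G,\mu)$ is a unital braided-commutative Yetter--Drinfeld $G$-C$^*$-algebra and $(\D_{B(\hat G,\mu)},\E_{B(\hat G,\mu)})\cong(\D_{B'},\E_{B'})\cong(\PP,\Pi)$.
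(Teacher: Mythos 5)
Your proposal is correct and follows essentially the same route as the paper: reduce to the $\mu=\delta_e$ case treated in Section~\ref{sec:discrete-dual}, use the intertwining of the categorical Markov operator with Izumi's $P_\mu$ under the identification $\B\cong\ell^\infty_\alg(\hat G)$ (the paper records this as $\pi(\iota\otimes P_\mu)=P_\mu\pi$) to match the harmonic products, and observe that the $\C[G]$-module structure only involves compositions with morphisms of $\Rep G$ and is therefore independent of $\mu$. The paper organizes this slightly differently—via a surjection $\tilde\B_\mu\to H^\infty_\alg(\hat G,\mu)$ factoring through $\B_\mu$ rather than your direct identification of $\bigoplus_s(\bar H_s\otimes\PP(\un,U_s))$ with the harmonic subspace—but the substance is the same.
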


\begin{proof} When $\mu=\delta_e$, in which case $H^\infty(\hat G,\mu)=\ell^\infty(\hat G)$, this theorem is the contents of Section~\ref{sec:discrete-dual}. The general case easily follows from this. Indeed, denote by $\tilde\B_\mu$ and $\B_\mu$ the algebras constructed from the Poisson boundary $(\PP,\Pi)$ of $(\Rep G,\mu)$ as described in Section~\ref{sec:from-tens-categ}. If $\mu=\delta_e$, we simply write $\tilde\B$ and $\B$. Thus,
$$
\tilde\B=\bigoplus_U(\bar H_U\otimes\Nat_b(\iota,\iota\otimes U)).
$$
As we showed in Section~\ref{sec:discrete-dual}, the Yetter--Drinfeld algebra $\B$ can be identified with $\ell^\infty_\alg(\hat G)$, and then the homomorphism $\pi\colon\tilde\B\to\B=\ell^\infty_\alg(\hat G)$ is given by
$$
\pi\left(\bar\xi\otimes\left(\sum_k a_k\otimes\zeta_k\right)\right)=\sum_k(\zeta_k,\xi)a_k,
$$
if we view $\Nat_b(\iota,\iota\otimes U)$ as a subspace of $\ell^\infty\text{-}\bigoplus_s(B(H_s)\otimes H_U)$.

The Markov operators $P_\mu$ on $\Nat_b(\iota,\iota\otimes U)$ define an operator $\iota\otimes P_\mu$ on $\tilde\B$. Then by definition, the algebra $\tilde\B_\mu$ is the subspace of $(\iota\otimes P_\mu)$-invariant elements in $\tilde\B$. Furthermore, by construction we have $\pi(\iota\otimes P_\mu)=P_\mu\pi$, where on the right hand side by $P_\mu$ we mean the operator on~$\ell^\infty(\hat G)$ used to define the Poisson boundary of $\hat G$. This already implies that the restriction of $\pi$ to $\tilde\B_\mu$ defines a surjective homomorphism $\tilde\B_\mu\to H^\infty_\alg(\hat G,\mu)$. Recalling how $\B_\mu$ is obtained from $\tilde\B_\mu$, we then conclude that this restriction factors through $\B_\mu$ and defines a $G$-equivariant $*$-isomorphism $\B_\mu\cong H^\infty_\alg(\hat G,\mu)$.

It remains to compare the $\C[G]$-module structures. For this part the computation is in fact exactly the same as for $\mu=\delta_e$.  The point is that, in the formula~\eqref{eq:from-tensor-to-dual-action} for the $\C[G]$-action, one only needs to compute the compositions of the form $(\iota \otimes T \otimes \iota)\bar{R}_U$ for $U, V \in \Rep G$ and $T \in \PP(\un, V)$. In general, if $\eta \in \PP(U, V)$ and $S \in W\to U$ is a morphism in $\Rep G$, the composition $\eta \cdot S$ is represented by the family $(\eta_X (\iota_X \otimes S))_X$, which is independent of $\mu$.  Thus, the $\C[G]$-module structure on $H^\infty_\alg(\hat{G}, \mu)$ induced by the tensor category structure of $\PP$ via the isomorphism $\B_\mu\cong H^\infty_\alg(\hat G,\mu)$, is the restriction of that on $\ell^\infty_\alg(\hat{G})$.  But this is exactly how the original $\C[G]$-module structure was defined on $H^\infty_\alg(\hat{G}, \mu)$.
\end{proof}

Recall that a probability measure $\mu$ on $\Irr(G)$ is called \emph{ergodic}, if the only $P_\mu$-harmonic functions on $\Irr(G)$ are the constant functions, that is,  $H^\infty(\hat{G}, \mu)^G$ reduces to $\C1$. Such a measure exists if and only if $\Irr(G)$ is at most countable and $\Rep G$ is weakly amenable, see~\cite{NY-categorical-Poisson-boundary}*{Sections~2 and~7.1}. From the above theorem and our results on categorical Poisson boundaries in~\cite{NY-categorical-Poisson-boundary} we then get the following theorem, originally proved by Tomatsu~\cite{MR2335776}*{Theorem~4.8}. (To be more precise, Tomatsu formulates the result in a more restricted form, but his proof shows that a stronger result formulated below is true.)

\begin{theorem}
\label{thm:coamen-Poi-bdry-quot-max-Kac}
Let $G$ be a coamenable compact quantum group, and $\mu$ be an ergodic probability measure on $\Irr(G)$.  Then the Poisson boundary $H^\infty(\hat{G}, \mu)$ is $G$- and $\hat G$-equivariantly isomorphic to $L^\infty(G/K)$, where $K$ is the maximal Kac quantum subgroup of $G$.
\end{theorem}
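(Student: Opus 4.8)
The plan is to run Tomatsu's theorem through the categorical machinery: combine Theorem~\ref{thm:tensor-cat-compar-Pois-bdry} with the identification of the categorical Poisson boundary of $(\Rep G,\mu)$ for ergodic $\mu$ carried out in~\cite{NY-categorical-Poisson-boundary}, and then read the resulting object of $\Tens(\Rep G)$ back through Theorem~\ref{tcatch} and the computation of Section~\ref{sec:quot-type-coid}. Since $G$ is coamenable the category $\Rep G$ is amenable, so an ergodic $\mu$ exists and the categorical Poisson boundary $(\PP,\Pi)$ of $(\Rep G,\mu)$ is defined. The key external input I would invoke is the result of~\cite{NY-categorical-Poisson-boundary} that, for $G$ coamenable and $\mu$ ergodic, $(\PP,\Pi)$ is isomorphic, as an object of $\Tens(\Rep G)$, to the pair consisting of $\Rep K$ and the forgetful functor $\Rep G\to\Rep K$, where $K$ is the maximal Kac quantum subgroup of $G$; this is the categorical counterpart of Tomatsu's theorem.

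Granting this, the first step is to apply Theorem~\ref{thm:tensor-cat-compar-Pois-bdry}, which gives an isomorphism of pairs $(\D_{B(\hat G,\mu)},\E_{B(\hat G,\mu)})\cong(\PP,\Pi)$ in $\Tens(\Rep G)$, hence $(\D_{B(\hat G,\mu)},\E_{B(\hat G,\mu)})$ is isomorphic to $\Rep K$ together with the forgetful functor. On the other hand, Section~\ref{sec:quot-type-coid} shows that $C(G/K)$ is a unital braided-commutative Yetter--Drinfeld $G$-C$^*$-algebra (with $\hat G$ acting by the restriction of the adjoint action on $C(G)$) and that the pair in $\Tens(\Rep G)$ associated with it is precisely $\Rep K$ together with the forgetful functor $\Rep G\to\Rep K$. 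Therefore $(\D_{B(\hat G,\mu)},\E_{B(\hat G,\mu)})\cong(\D_{C(G/K)},\E_{C(G/K)})$ in $\Tens(\Rep G)$, and applying the equivalence $\YD(G)\simeq\Tens(\Rep G)$ of Theorem~\ref{tcatch} we obtain a $G$- and $\hat G$-equivariant $*$-isomorphism $B(\hat G,\mu)\cong C(G/K)$ of C$^*$-algebras.

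The remaining step is to promote this to a von Neumann algebraic statement. By ergodicity of $\mu$ one has $H^\infty(\hat G,\mu)^G=\C1$, hence $B(\hat G,\mu)^G=\C1$, and likewise $C(G/K)^G=\C1$; thus both C$^*$-algebras carry ergodic actions of $G$, each with a unique $G$-invariant state, and the isomorphism of the previous paragraph necessarily intertwines these invariant states. Since $G$ is coamenable, the Haar state is faithful, so the conditional expectation $(h\otimes\iota)\alpha$ onto the fixed point algebra of any ergodic action is faithful; consequently the invariant states in question are faithful, and the GNS completions of $C(G/K)$ and of $B(\hat G,\mu)$ with respect to them are $L^\infty(G/K)$ and $H^\infty(\hat G,\mu)$ respectively --- for the latter one uses that the regular subalgebra $H^\infty_\alg(\hat G,\mu)$, and hence its norm closure $B(\hat G,\mu)$, is weak$^*$-dense in the Poisson boundary. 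Extending the C$^*$-isomorphism to these completions then yields the desired $G$- and $\hat G$-equivariant isomorphism $H^\infty(\hat G,\mu)\cong L^\infty(G/K)$.

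The one place where genuine work is hidden is the first input: the identification of the categorical Poisson boundary with $\Rep K$ for ergodic $\mu$, which is what is actually established in~\cite{NY-categorical-Poisson-boundary}. Once that is in hand, the rest is transport of structure along the two categorical equivalences (Theorems~\ref{tcatch} and~\ref{thm:tensor-cat-compar-Pois-bdry}) together with the standard fact that an ergodic action on a unital C$^*$-algebra completes uniquely, through the GNS construction of its invariant state, to an ergodic action on a von Neumann algebra.
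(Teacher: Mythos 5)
Your proposal is correct and follows essentially the same route as the paper: invoke the identification of the categorical Poisson boundary of $(\Rep G,\mu)$ with the forgetful functor $\Rep G\to\Rep K$ from \cite{NY-categorical-Poisson-boundary}, transport it through Theorem~\ref{thm:tensor-cat-compar-Pois-bdry} and the computation of Section~\ref{sec:quot-type-coid} to get $B(\hat G,\mu)\cong C(G/K)$ equivariantly, and then pass to the von Neumann completions via the GNS representations of the unique $G$-invariant states. The only difference is cosmetic: you spell out the last (GNS) step in slightly more detail than the paper does.
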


\bp The results of~\cite{NY-categorical-Poisson-boundary}*{Section~4} imply that the Poisson boundary of $(\Rep G,\mu)$ is isomorphic to the forgetful functor $\F\colon\Rep G\to\Rep K$, see~\cite{classification}*{Section~2} for details. From Theorem~\ref{thm:tensor-cat-compar-Pois-bdry} and the discussion in Section~\ref{sec:quot-type-coid}, where we showed that we have an isomorphism $(\D_{C(G/K)},\E_{C(G/K)})\cong(\Rep K,\F)$, we conclude that there exists a $G$- and $\hat G$-equivariant isomorphism $B(\hat G,\mu)\cong C(G/K)$. Since $H^\infty(\hat G,\mu)$ and $L^\infty(G/K)$ are the von Neumann algebras generated by $B(\hat G,\mu)$ and $C(G/K)$, respectively, in the GNS-representations defined by the unique $G$-invariant states, we conclude that $H^\infty(\hat G,\mu)\cong L^\infty(G/K)$.
\ep

Of course, conversely, the argument of Tomatsu could be used to show
that the Poisson boundary of $(\Rep G,\mu)$ is
$\F\colon\Rep G\to\Rep K$ without relying
on~\cite{NY-categorical-Poisson-boundary}*{Section~4}.

Note that in order to prove Theorem~\ref{thm:coamen-Poi-bdry-quot-max-Kac} we do not need the full strength of Theorem~\ref{tcatch}, it suffices to understand how $B(\hat G,\mu)$ and $C(G/K)$ are reconstructed from the functors $\Pi\colon\Rep G\to\PP$ and $\F\colon\Rep G\to\Rep K$. It is also worth noting that independently of which approach to  Theorem~\ref{thm:coamen-Poi-bdry-quot-max-Kac} one prefers, all the results of this type have so far relied in a crucial, but every time different, way on the so called Izumi's Poisson integral~\citelist{\cite{MR1916370}\cite{MR2200270}\cite{MR2335776} \cite{NY-categorical-Poisson-boundary}}.

\smallskip

We finish the paper by proving a converse to
Theorem~\ref{thm:coamen-Poi-bdry-quot-max-Kac}.

\begin{proposition}
Let $G$ be a compact quantum group and $\mu$ be a probability measure on $\Irr(G)$. Assume that the Poisson boundary $H^\infty(\hat G,\mu)$ is $G$- and $\hat G$-equivariantly isomorphic to $L^\infty(G/H)$ for a closed quantum subgroup $H$ of $G$. Then $G$ is coamenable, and hence $H$ is the maximal Kac quantum subgroup of~$G$.
\end{proposition}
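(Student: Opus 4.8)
The plan is to show that $\hat G$ is an amenable discrete quantum group --- equivalently, that $G$ is coamenable --- by producing a right-invariant mean on $\ell^\infty(\hat G)$; the last assertion of the proposition then follows by combining this with Theorem~\ref{thm:coamen-Poi-bdry-quot-max-Kac}. The guiding idea is that the Poisson boundary of a non-coamenable quantum group carries no normal invariant state, whereas every quantum homogeneous space does: the Haar state $\psi_0$ of $L^\infty(G/H)$ --- the unique normal $G$-invariant state, induced by the Haar state of $G$ --- is also invariant under the right adjoint action of $\hat G$, since the Haar state of $C(G)$ is invariant under the adjoint action (a one-line consequence of its left invariance). Transporting $\psi_0$ along the given $G$- and $\hat G$-equivariant isomorphism, we obtain a normal $\hat G$-invariant state, still denoted $\psi_0$, on $H^\infty(\hat G,\mu)$, so that $(\psi_0\otimes\iota)\Dhat(y)=\psi_0(y)1$ for all $y\in H^\infty(\hat G,\mu)$.

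Next I would construct a unital completely positive idempotent $E\colon\ell^\infty(\hat G)\to H^\infty(\hat G,\mu)$ with range $H^\infty(\hat G,\mu)$ that commutes with all the right translation maps $R_\rho:=(\iota\otimes\rho)\Dhat$, $\rho$ a normal functional on $\ell^\infty(\hat G)$. Since each $P_{\phi_s}$, and hence $P_\mu$, commutes with every $R_\rho$ (a routine computation from coassociativity of $\Dhat$), the Ces\`{a}ro averages $A_N=\frac1N\sum_{n=0}^{N-1}P_\mu^n$ are unital completely positive and commute with the $R_\rho$; letting $E$ be the limit of $(A_N)_N$ along a free ultrafilter in the topology of pointwise weak$^*$ convergence yields a unital completely positive map that restricts to the identity on $P_\mu$-harmonic elements and satisfies $P_\mu E=E$, so $E$ is an idempotent with range $H^\infty(\hat G,\mu)$; it still commutes with the $R_\rho$, which are weak$^*$-continuous and so pass through the ultrafilter limit. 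Setting $m:=\psi_0\circ E$, a state on $\ell^\infty(\hat G)$, one computes for $a\in\ell^\infty(\hat G)$ and $\rho$ normal
\[
m\big((\iota\otimes\rho)\Dhat(a)\big)=\psi_0\big(E((\iota\otimes\rho)\Dhat(a))\big)=\psi_0\big((\iota\otimes\rho)\Dhat(E(a))\big)=\rho(1)\,\psi_0(E(a))=\rho(1)\,m(a),
\]
using that $E$ commutes with $R_\rho$ and that $\psi_0$ is $\hat G$-invariant. Hence $m$ is a right-invariant mean on $\ell^\infty(\hat G)$, so $\hat G$ is amenable and $G$ is coamenable.

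For the remaining assertion, the $G$-equivariant part of the hypothesis gives $H^\infty(\hat G,\mu)^G\cong L^\infty(G/H)^G=\C1$, so the only $P_\mu$-harmonic functions on $\Irr(G)$ are constant, i.e.\ $\mu$ is ergodic. With $G$ now known to be coamenable, Theorem~\ref{thm:coamen-Poi-bdry-quot-max-Kac} provides a $G$- and $\hat G$-equivariant isomorphism $H^\infty(\hat G,\mu)\cong L^\infty(G/K)$ with $K$ the maximal Kac quantum subgroup of $G$. Combining with the hypothesis, $L^\infty(G/H)\cong L^\infty(G/K)$ $G$- and $\hat G$-equivariantly; restricting to the regular parts, which are intrinsic to the $G$-action, this is an isomorphism of braided-commutative Yetter--Drinfeld $G$-C$^*$-algebras $C(G/H)\cong C(G/K)$. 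By Theorem~\ref{tcatch}, together with the identification of $(\D_{C(G/H)},\E_{C(G/H)})$ with $(\Rep H,\F)$ (forgetful functor) recalled in Section~\ref{sec:quot-type-coid}, this forces $(\Rep H,\F)\cong(\Rep K,\F)$ in $\Tens(\Rep G)$, and therefore $H=K$ by the uniqueness argument in the proof of Theorem~\ref{tTomSal}, where a quotient type coideal is shown to determine the kernel of the restriction map $\C[G]\to\C[H]$.

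The main point to get right is the construction of the idempotent $E$ and its compatibility with the $\hat G$-translation: since $E$ is not normal one must avoid slicing a von Neumann algebraic tensor product with it, which is why the computation above only evaluates $E$ on single elements of $\ell^\infty(\hat G)$, applies the normal maps $R_\rho$ before $E$, and applies the normal state $\psi_0$ only on $H^\infty(\hat G,\mu)$. The other ingredients --- invariance of the Haar state under the adjoint action, the descent from the von Neumann to the C$^*$-level, and the final uniqueness of $H$ --- are routine given the results already established and the standard fact that a discrete quantum group is amenable precisely when its compact dual is coamenable.
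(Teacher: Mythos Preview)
Your argument has a genuine gap: the claim that the Haar state on $L^\infty(G/H)$ is invariant under the right adjoint action of $\hat G$ --- that is, that $h(x_{(1)}aS(x_{(2)}))=\varepsilon(x)h(a)$ for $x\in\C[G]$ and $a\in\C[G/H]$ --- is false for genuinely quantum~$G$. For a commutative Hopf algebra the adjoint action is trivial, which may be the source of the intuition, but already for $G=\SU_q(2)$ with $0<q<1$ and $H=\T$ (the maximal Kac subgroup, so precisely the situation the proposition is meant to cover) one computes $\alpha\rhd(\gamma^*\gamma)=q^2\gamma^*\gamma+q^2(1-q^2)(\gamma^*\gamma)^2$, and since $h((\gamma^*\gamma)^n)=(1-q^2)/(1-q^{2n+2})$ one checks directly that $h(\alpha\rhd(\gamma^*\gamma))\ne h(\gamma^*\gamma)$ unless $q=1$. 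Left invariance of $h$ gives $(\iota\otimes h)\Delta=h(\cdot)1$, but this says nothing about $h(x_{(1)}aS(x_{(2)}))$; controlling that expression would require the KMS/modular structure of $h$, which is exactly where non-Kac behaviour obstructs the identity. Since your invariant mean $m=\psi_0\circ E$ rests entirely on the $\hat G$-invariance of $\psi_0$, the argument collapses at this step. Nor is there an evident substitute: the counit $\varepsilon$ is $\hat G$-invariant on $\C[G/H]$ but does not extend to a normal state on $L^\infty(G/H)$, and producing a normal $\hat G$-invariant state on the boundary is essentially as hard as proving amenability of $\hat G$ directly. (The construction of the $\hat G$-equivariant expectation $E$ via Ces\`aro averages and an ultrafilter, and the final identification $H=K$, are fine; the problem is solely with $\psi_0$.)

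The paper proceeds differently and avoids invariant means altogether. It uses Theorem~\ref{thm:tensor-cat-compar-Pois-bdry} to identify the categorical Poisson boundary of $(\Rep G,\mu)$ with the forgetful functor $\Rep G\to\Rep H$, notes that $\mu$ is ergodic because $L^\infty(G/H)^G=\C1$, and then invokes \cite{NY-categorical-Poisson-boundary}*{Theorem~5.1}: for ergodic $\mu$ the categorical Poisson boundary realises the \emph{amenable} dimension function on $\Rep G$. Since $\Rep H$ admits a fiber functor, its intrinsic dimension is the classical one; hence the classical dimension function on $\Rep G$ is amenable, which is exactly coamenability of~$G$.
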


\bp Theorem~\ref{thm:tensor-cat-compar-Pois-bdry} and the assumptions of the proposition imply that the Poisson boundary of $(\Rep G,\mu)$ is isomorphic to the forgetful functor $\F\colon\Rep G\to\Rep H$. On the other hand, since the action of $G$ on $L^\infty(G/H)$ is ergodic, the measure $\mu$ is ergodic, and therefore by~\cite{NY-categorical-Poisson-boundary}*{Theorem~5.1} the Poisson boundary of $(\Rep G,\mu)$ defines the amenable dimension function on $\Rep G$. It follows that the classical dimension function on $\Rep G$ is amenable, which exactly means that $G$ is coamenable.
\ep

\bigskip

\begin{bibdiv}
\begin{biblist}

\bib{arXiv:1006.3014}{misc}{
      author={Bichon, Julien},
       title={Hopf--Galois objects and cogroupoids},
         how={lecture notes},
        date={2010},
      eprint={\href{http://arxiv.org/abs/1006.3014}{{\tt arXiv:1006.3014
  [math.QA]}}},
}

\bib{MR2202309}{article}{
      author={Bichon, Julien},
      author={De~Rijdt, An},
      author={Vaes, Stefaan},
       title={Ergodic coactions with large multiplicity and monoidal
  equivalence of quantum groups},
        date={2006},
        ISSN={0010-3616},
     journal={Comm. Math. Phys.},
      volume={262},
      number={3},
       pages={703\ndash 728},
         url={http://dx.doi.org/10.1007/s00220-005-1442-2},
         doi={10.1007/s00220-005-1442-2},
      eprint={\href{http://arxiv.org/abs/math/0502018}{{\tt arXiv:math/0502018 
  [math.OA]}}},
      review={\MR{2202309 (2007a:46072)}},
}

\bib{MR2863377}{article}{
      author={Brugui{{\`e}}res, Alain},
      author={Natale, Sonia},
       title={Exact sequences of tensor categories},
        date={2011},
        ISSN={1073-7928},
     journal={Int. Math. Res. Not. IMRN},
      number={24},
       pages={5644\ndash 5705},
         url={http://dx.doi.org/10.1093/imrn/rnq294},
         doi={10.1093/imrn/rnq294},
      eprint={\href{http://arxiv.org/abs/1006.0569}{{\tt arXiv:1006.0569 
  [math.QA]}}}, 
      review={\MR{2863377}},
}

\bib{MR1291020}{inproceedings}{
      author={Caenepeel, S.},
      author={Van~Oystaeyen, F.},
      author={Zhang, Y.~H.},
       title={Quantum {Y}ang-{B}axter module algebras},
        date={1994},
   booktitle={Proceedings of {C}onference on {A}lgebraic {G}eometry and {R}ing
  {T}heory in honor of {M}ichael {A}rtin, {P}art {III} ({A}ntwerp, 1992)},
      volume={8},
       pages={231\ndash 255},
         url={http://dx.doi.org/10.1007/BF00960863},
         doi={10.1007/BF00960863},
      review={\MR{1291020 (95e:16031)}},
}

\bib{MR1289088}{article}{
      author={Cohen, Miriam},
      author={Westreich, Sara},
       title={From supersymmetry to quantum commutativity},
        date={1994},
        ISSN={0021-8693},
     journal={J. Algebra},
      volume={168},
      number={1},
       pages={1\ndash 27},
         url={http://dx.doi.org/10.1006/jabr.1994.1217},
         doi={10.1006/jabr.1994.1217},
      review={\MR{1289088 (95e:16033)}},
}

\bib{MR3039775}{article}{
      author={Davydov, Alexei},
      author={M{{\"u}}ger, Michael},
      author={Nikshych, Dmitri},
      author={Ostrik, Victor},
       title={The {W}itt group of non-degenerate braided fusion categories},
        date={2013},
        ISSN={0075-4102},
     journal={J. Reine Angew. Math.},
      volume={677},
       pages={135\ndash 177},
      eprint={\href{http://arxiv.org/abs/1009.2117}{{\tt arXiv:1009.2117
  [math.QA]}}},
      review={\MR{3039775}},
}

\bib{MR3121622}{article}{
      author={De~Commer, Kenny},
      author={Yamashita, Makoto},
       title={Tannaka-{K}re\u\i n duality for compact quantum homogeneous
  spaces. {I}. {G}eneral theory},
        date={2013},
        ISSN={1201-561X},
     journal={Theory Appl. Categ.},
      volume={28},
       pages={No. 31, 1099\ndash 1138},
      eprint={\href{http://arxiv.org/abs/1211.6552}{{\tt arXiv:1211.6552
  [math.OA]}}},
      review={\MR{3121622}},
}

\bib{MR1916370}{article}{
      author={Izumi, Masaki},
       title={Non-commutative {P}oisson boundaries and compact quantum group
  actions},
        date={2002},
        ISSN={0001-8708},
     journal={Adv. Math.},
      volume={169},
      number={1},
       pages={1\ndash 57},
         url={http://dx.doi.org/10.1006/aima.2001.2053},
         doi={10.1006/aima.2001.2053},
      review={\MR{1916370 (2003j:46105)}},
}

\bib{MR2200270}{article}{
      author={Izumi, Masaki},
      author={Neshveyev, Sergey},
      author={Tuset, Lars},
       title={Poisson boundary of the dual of {${\rm SU}_q(n)$}},
        date={2006},
        ISSN={0010-3616},
     journal={Comm. Math. Phys.},
      volume={262},
      number={2},
       pages={505\ndash 531},
         url={http://dx.doi.org/10.1007/s00220-005-1439-x},
         doi={10.1007/s00220-005-1439-x},
      eprint={\href{http://arxiv.org/abs/math/0402074}{{\tt arXiv:math/0402074
  [math.OA]}}},
      review={\MR{MR2200270 (2007f:58012)}},
}

\bib{MR1190512}{incollection}{
      author={Landstad, Magnus~B.},
       title={Ergodic actions of nonabelian compact groups},
        date={1992},
   booktitle={Ideas and methods in mathematical analysis, stochastics, and
  applications ({O}slo, 1988)},
   publisher={Cambridge Univ. Press, Cambridge},
       pages={365\ndash 388},
      review={\MR{1190512 (93j:46072)}},
}

\bib{neshveyev-mjm-categorification}{article}{
      author={Neshveyev, Sergey},
       title={Duality theory for nonergodic actions},
     journal={M\"unster J. Math.},
      eprint={\href{http://arxiv.org/abs/1303.6207}{{\tt arXiv:1303.6207
  [math.OA]}}},
        note={to appear in print},
}

\bib{neshveyev-tuset-book}{book}{
      author={Neshveyev, Sergey},
      author={Tuset, Lars},
      title={Compact quantum groups and their representation categories},
      series={Cours Sp\'ecialis\'es [Specialized Courses]},
      volume={20},
      publisher={Soci\'et\'e Math\'ematique de France, Paris},
      date={2013},
      pages={168},
      isbn={978-2-85629-777-3},
      note={preliminary version available at \url{http://folk.uio.no/sergeyn/papers/CQGRC.pdf}},
}

\bib{NY-categorical-Poisson-boundary}{misc}{
      author={Neshveyev, Sergey},
      author={Yamashita, Makoto},
      title={Poisson boundaries of monoidal categories},
      date={2014},
      how={preprint},
      eprint={\href{http://arxiv.org/abs/1405.6572}{{\tt arXiv:1405.6572[math.OA]}}},
}

\bib{classification}{misc}{
      author={Neshveyev, Sergey},
      author={Yamashita, Makoto},
       title={Classification of non-{K}ac compact quantum groups of $\mathrm{SU}(n)$ type},
       how={preprint},
       date={2014},
       eprint={\href{http://arxiv.org/abs/1405.6574}{{\tt arXiv:1405.6574[math.QA]}}},
}

\bib{MR2566309}{article}{
      author={Nest, Ryszard},
      author={Voigt, Christian},
       title={Equivariant {P}oincar\'e duality for quantum group actions},
        date={2010},
        ISSN={0022-1236},
     journal={J. Funct. Anal.},
      volume={258},
      number={5},
       pages={1466\ndash 1503},
      eprint={\href{http://arxiv.org/abs/0902.3987}{{\tt arXiv:0902.3987
  [math.KT]}}},
         url={http://dx.doi.org/10.1016/j.jfa.2009.10.015},
         doi={10.1016/j.jfa.2009.10.015},
      review={\MR{2566309}},
}

\bib{MR2785890}{article}{
      author={Salmi, Pekka},
       title={Compact quantum subgroups and left invariant {$C^*$}-subalgebras
  of locally compact quantum groups},
        date={2011},
        ISSN={0022-1236},
     journal={J. Funct. Anal.},
      volume={261},
      number={1},
       pages={1\ndash 24},
         url={http://dx.doi.org/10.1016/j.jfa.2011.03.003},
         doi={10.1016/j.jfa.2011.03.003},
      eprint={\href{http://arxiv.org/abs/1004.4161}{{\tt arXiv:1004.4161  [math.OA]}}},
      review={\MR{2785890 (2012f:46159)}},
}

\bib{MR1408508}{article}{
      author={Schauenburg, Peter},
       title={Hopf bi-{G}alois extensions},
        date={1996},
        ISSN={0092-7872},
     journal={Comm. Algebra},
      volume={24},
      number={12},
       pages={3797\ndash 3825},
         url={http://dx.doi.org/10.1080/00927879608825788},
         doi={10.1080/00927879608825788},
      review={\MR{1408508 (97f:16064)}},
}

\bib{MR2335776}{article}{
      author={Tomatsu, Reiji},
       title={A characterization of right coideals of quotient type and its
  application to classification of {P}oisson boundaries},
        date={2007},
        ISSN={0010-3616},
     journal={Comm. Math. Phys.},
      volume={275},
      number={1},
       pages={271\ndash 296},
         url={http://dx.doi.org/10.1007/s00220-007-0267-6},
         doi={10.1007/s00220-007-0267-6},
      eprint={\href{http://arxiv.org/abs/math/0611327}{{\tt arXiv:math/0611327
  [math.OA]}}},
      review={\MR{2335776 (2008j:46058)}},
}

\bib{MR1006625}{article}{
      author={Ulbrich, K.-H.},
       title={Fibre functors of finite-dimensional comodules},
        date={1989},
        ISSN={0025-2611},
     journal={Manuscripta Math.},
      volume={65},
      number={1},
       pages={39\ndash 46},
         url={http://dx.doi.org/10.1007/BF01168365},
         doi={10.1007/BF01168365},
      review={\MR{1006625 (90e:16014)}},
}

\bib{MR990110}{article}{
      author={Wassermann, Antony},
       title={Ergodic actions of compact groups on operator algebras. {II}.
  {C}lassification of full multiplicity ergodic actions},
        date={1988},
        ISSN={0008-414X},
     journal={Canad. J. Math.},
      volume={40},
      number={6},
       pages={1482\ndash 1527},
      review={\MR{MR990110 (92d:46168)}},
}

\end{biblist}
\end{bibdiv}

\bigskip

\end{document}